\documentclass[english, 12pt]{article}
\usepackage[T1]{fontenc}
\usepackage{geometry}
\geometry{verbose,tmargin=2.8cm,bmargin=3cm,lmargin=2.8cm,rmargin=2.8cm}
\usepackage[latin9]{inputenc}
\usepackage{amsthm}
\usepackage{amsmath}
\usepackage{amssymb}
\usepackage{authblk}
\usepackage[margin=2cm, labelfont = bf]{caption}
\usepackage{setspace}
\usepackage{esint}
\usepackage{enumerate}
\usepackage{etoolbox}
\usepackage{url}
\usepackage{graphicx, epstopdf}
\usepackage[english]{babel}
\usepackage{xcolor}
\usepackage{multicol}
\usepackage{soul}
\usepackage{tikz}
\usetikzlibrary{automata,arrows,positioning,calc}

\numberwithin{equation}{section}

\makeatletter
\theoremstyle{plain}
\newtheorem{thm}{\protect\theoremname}[section]
\ifx\proof\undefined
\newenvironment{proof}[1][\protect\proofname]{\par
\normalfont\topsep6\p@\@plus6\p@\relax
\trivlist
\itemindent\parindent
\item[\hskip\labelsep
\scshape
#1]\ignorespaces
}{%
\endtrivlist\@endpefalse
}
\providecommand{\proofname}{Proof}
\fi
\theoremstyle{plain}
\newtheorem{lem}[thm]{\protect\lemmaname}
\theoremstyle{plain}
\newtheorem{prop}[thm]{\protect\propositionname}
\theoremstyle{plain}

\theoremstyle{definition}
\newtheorem{defn}[thm]{\protect\definitionname}
\theoremstyle{remark}
\newtheorem{rem}[thm]{\protect\remarkname}
\theoremstyle{plain}

\theoremstyle{definition}

\numberwithin{figure}{section}

\makeatother

\usepackage{babel}
\providecommand{\conjecturename}{Conjecture}
\providecommand{\corollaryname}{Corollary}
\providecommand{\definitionname}{Definition}
\providecommand{\examplename}{Example}
\providecommand{\lemmaname}{Lemma}
\providecommand{\propositionname}{Proposition}
\providecommand{\remarkname}{Remark}
\providecommand{\theoremname}{Theorem}



\usepackage{mathrsfs}

\setcounter{tocdepth}{2}

\begin{document}


\title{\vspace{-15pt}The supercooled Stefan problem with \\transport noise: weak solutions and blow-up}

\author{Sean Ledger}
\author{Andreas S{\o}jmark\footnote{AS would like to thank Sergey Nadtochiy as well as Sergey, Misha Shkolnikov, and Christa Cuchiero for organising, respectively, the session on Probabilistic methods for Stefan-type equations at the SPA conference in Lisbon and the workshop on Laplacian Growth Models at IMSI in Chicago. Early versions of the results in this paper were first presented on these occasions and they took their current form following fruitful discussions with the organisers and other participants.}}
\affil{Department of Statistics, London School of Economics}

\date{March 9, 2026}

\maketitle
\vspace{-15pt}

\begin{abstract} We derive two weak formulations for the supercooled Stefan problem with transport noise on a half-line: one captures a continuously evolving system, while the other resolves blow-ups by allowing for jump discontinuities in the evolution of the temperature profile and the freezing front. For the first formulation, we establish a probabilistic representation in terms of a conditional McKean--Vlasov problem, and we then show that there is finite time blow-up with positive probability when part of the initial temperature profile is supercooled below a critical value. On the other hand, the system is shown to evolve continuously when the initial profile is everywhere above this value. In the presence of blow-ups, we show that the conditional McKean--Vlasov problem provides global solutions of the second weak formulation. Finally, we identify a solution of minimal temperature increase over time and we show that its discontinuities are characterised by a natural resolution of emerging instabilities with respect to an infinitesimal external heat transfer.
 \end{abstract}

\vspace{2pt}

\section{Introduction}

This work is concerned with a stochastic version of the supercooled Stefan problem on the positive half-line. In the deterministic setting, the classical formulation of this problem is given by the following free boundary problem:
\begin{equation}\label{eq:classical_soln}
	\left\{
	\begin{aligned}
		\partial_{t}v(t,x) &= \displaystyle \kappa \partial_{xx} v(t,x) \;\; \text{for} \;\; x\in (s(t),\infty)  \\[3pt]
		v(t,x)&=v_f \;\; \text{for} \;\; x\in [0,s(t)) \\[3pt]
		v(t,s(t)) &=v_f, \quad   \partial_x v(t,s(t)) = -\lambda \dot{s}(t),
	\end{aligned}
	\right.
\end{equation}
with initial conditions $s(0)=s_0$ and $v(0,\cdot)=v_0$, for some $s_0\in[0,\infty)$ and $v_0\in L^1(\mathbb{R})$, where $v_0(x) \leq v_f$ for a.e.~$x\in[0,\infty)$, and $\kappa, \lambda >0$ and  $v_f\in \mathbb{R}$ are given constants.

To explain the formulation, consider a liquid that occupies the positive half-line and let $v(t,x)$ denote its temperature at the spatial position $x\in[0,\infty)$, at time $t\geq 0$. Initially, the liquid is frozen on $[0,s_0]$, while it is supercooled on $(s_0,\infty)$, meaning that $v_0(x)<v_f$ for all $x\in (s_0,\infty)$, where $v_f$ denotes the equilibrium freezing temperature of the liquid. Assuming that the phase change is isothermal and that there is zero heat flux in the frozen state, the temperature equals $v_f$ at the interface $x=s(t)$, where $s(t)$ denotes the current position of the freezing front, and we have a constant temperature $v(t,x)=v_f$ on $[0,s(t)]$. Now let the properties of the liquid be described by its thermal conductivity $k$, latent heat $\ell$, mass density $\varrho$, and specific heat capacity $c$. Then, the laws of thermodynamics give that the evolution of the temperature profile $v(t,x)$ and the freezing front $s(t)$ are governed by \eqref{eq:classical_soln} with
\begin{equation}\label{eq:clambda_kappa}
	\lambda = \frac{\varrho \ell}{k}>0 \quad \text{and} \quad \kappa= \frac{k}{c\varrho}>0.
\end{equation}

While most of the results in this paper can also be of interest for \eqref{eq:classical_soln}, we are chiefly interested in the stochastic version that arises when introducing a Brownian transport noise into the evolution of the temperature profile. Formally, this new problem reads as
\begin{equation}\label{eq:classical_noise}
	\left\{
	\begin{aligned}
		\mathrm{d} v(t,x) &= \displaystyle \kappa \partial_{xx} v(t,x) \mathrm{d}t+\theta \partial_x v(t,x) 	\mathrm{d}  W_{t}   \;\;\, \text{for} \;\; x\in ( s(t),\infty)  \\[3pt]
		v(t,x)&= v_f \;\; \text{for} \;\; x\in[0, s(t)) \\[3pt]
		v(t,s(t)) &= v_f, \quad   \partial_x v(t,s(t)) = - \lambda \dot{s}(t) ,
	\end{aligned}\right.
\end{equation}
for a given noise parameter $\theta \neq 0$, with $v(0,x)=v_0(x)$ for $x\in (s_0,\infty)$ and $s(0)=s_0$. Note that $\theta =0$ takes us back to \eqref{eq:classical_soln}. We shall always assume $|\theta|<\sqrt{2\kappa}$ so that \eqref{eq:classical_noise} defines a free boundary problem for which the temperature profile $v$ follows a \emph{parabolic} stochastic PDE (see e.g.~\cite[Section 1.2]{Brzezniak}) in line with the parabolic nature of the deterministic problem. Furthermore, for simplicity of notation, we will take the equilibrium freezing temperature $v_f$ to be $v_f=0$ throughout.
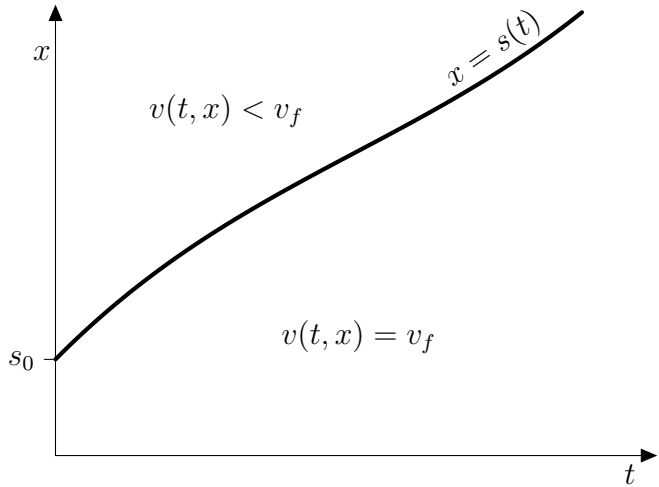
\begin{figure}
	\centering
	\begin{tikzpicture}[line cap=round,line join=round,>=triangle 45,x=1cm,y=1cm]
		\clip(-1.21,-0.65) rectangle (11.46,6.72);
		\draw [->] (0,0) -- (0,6);
		\draw [->] (0,0) -- (8,0);
		\draw (0,1.28)-- (-0.15,1.28);
		\draw[line width=1.6pt, smooth,samples=100,domain=0.0:7.0] plot(\x,{0.01*(\x)^3-0.12*(\x)^2+1.01*(\x)+1.28});
		\draw (-0.44,5.59) node[anchor=north west] {$ x $};
		\draw (7.42,0.02) node[anchor=north west] {$ t $};
		\draw (-0.78,1.53) node[anchor=north west] {$ s_0 $};
		\draw (1.1,4.95) node[anchor=north west] {$ v(t,x) < v_f $};
		\draw (2.86,1.94) node[anchor=north west] {$ v(t,x) = v_f $};
		\draw (4.9,5.2) node[rotate=35, anchor=north west] {$ x = s(t) $};
	\end{tikzpicture}\vspace{-10pt}
	\caption{Domain of the free boundary problems \eqref{eq:classical_soln} and \eqref{eq:classical_noise}. One should have in mind a continuously differentiable boundary for \eqref{eq:classical_soln} versus something akin to Minkowski's question-mark function for \eqref{eq:classical_noise}.}\label{fig:cont_domain}
\end{figure}

A nonzero parameter $\theta \neq 0$ in \eqref{eq:classical_noise} can serve as a model for uncertainty in the measurement of the true temperature. Similarly, it provides a model for the presence of a random environment affecting the current temperature by Brownian fluctuations. Later, we shall study a probabilistic representation which highlights the following interpretation: at the microscopic level, the heat diffusion is described by a mean-field of Brownian `heat particles' which are correlated through a common Brownian motion $W$ that then appears as a stochastic transport term in the evolution of the temperature profile at the macroscopic level.

We also mention that $\theta\neq 0$ in \eqref{eq:classical_noise} arises naturally in the analysis of recent models for financial contagion (see \cite{hambly_ledger_sojmark_2018, LS, nadtochiy_shkolnikov_2017, nadtochiy_shkolnikov_2020}) when introducing common exposures between the firms. Similarly, a version of \eqref{eq:classical_noise} with re-insertion of mass would appear in the mean-field limit of networked integrate-and-fire models (see \cite{DIRT_AAP, DIRT_SPA}) if the noisy parts of the neurons' membrane potentials are taken to be correlated through a common noise.

When discussing \eqref{eq:classical_noise}, it should be stressed that there has already been significant interest in various stochastic versions of the Stefan problem, see in particular \cite{Barbu, Hambly_Kalsi, Keller-Ressel, Kim1,  Kim2, muller}. Out of this literature, \cite{muller} is the only work that considers a stochastic perturbation of transport type, as in the case of \eqref{eq:classical_noise}, but the conditions at the interface differ somewhat from the actual Stefan problem and, as noted in the paper, this appears to be crucial for the particular framework \cite[p.~2339]{muller}. More importantly, none of these existing works are interested in the \emph{supercooled} specification, so our analysis is novel in that regard.

As concerns the original (i.e., noiseless) supercooled Stefan problem, a breakthrough was recently achieved in \cite{DNS}. Given a bounded right-continuous initial profile $v_0$ with at most finitely many changes of monotonicity on compacts, it was shown that there exists a \emph{unique} global `physical' solution (see Remark \ref{rem:physical_minimal}) to a probabilistic McKean--Vlasov reformulation of the problem that allows for finite time blow-ups and provides, at any time $t\geq 0$, a classical solution of \eqref{eq:classical_soln} on a subsequent time interval $(t,t+\varepsilon)$. In later work, \cite{cuchiero} focused on the notion of a \emph{minimal} solution to the aforementioned McKean--Vlasov formulation and showed that this (uniquely defined) minimal solution is physical in the same sense as in \cite{DNS}. Furthermore, \cite{bayraktar} has derived conditions under which the McKean--Vlasov problem yields a classical solution of \eqref{eq:classical_soln} for all time. This extends ideas from earlier works \cite{Fasano1, Fasano2} in the PDE literature that studied local and global classical solutions of \eqref{eq:classical_soln} when formulated for a supercooled liquid on a bounded interval.

The analysis of this paper will differ markedly from the above, as we focus on the stochastic problem \eqref{eq:classical_noise}. Moreover, we take a different route by first deriving suitable weak formulations, starting from a continuously evolving system without enforcing any differentiability or absolute continuity constraints. Then, we show that this formulation admits a probabilistic representation, which now takes the form of a \emph{conditional} McKean--Vlasov problem due to the stochastic perturbation. Moreover, as one of the main contributions of this paper, we show that any such solution blows up in finite time with positive probability if the initial profile is supercooled below $\lambda \kappa$ at any given point (assuming also the profile is right-continuous at that point). Here a blow-up refers to a breakdown of the weak formulation. This is related, at least in spirit, to the results of \cite{dozzi} and \cite{mueller_sowers, mueller} on finite time blow-up with positive probability for different stochastic perturbations of the heat equation.

Finally, we proceed to derive a more general weak formulation, which allows for jump discontinuities to emerge, and for which we can establish global existence by means of the conditional McKean--Vlasov problem. As our second main contribution, we single out one of these probabilistic solutions as the solution of minimal total temperature increase over time, in the pathwise sense, and we confirm that its jump discontinuities obey a natural selection principle. Our approach to this part of the paper not only yields novel results for \eqref{eq:classical_noise} but also provides new perspectives on the recent work of \cite{cuchiero, DNS} for the deterministic problem \eqref{eq:classical_soln}.

\section{Weak formulations}\label{sect:results_supercooled}

By integrating $v(t,\cdot)$ from \eqref{eq:classical_soln} against a suitable test function, enforcing
$v(t,s(t))=0$ and $\partial_x v(t,s(t)) = -\lambda\dot{s}(t)$, and then integrating by parts, we arrive at a natural notion of weak solution that implicitly encodes these boundary conditions. After first introducing some notation, we state this weak formulation in Definition \ref{def:weak_cont} below.

Throughout, we work on a suitable complete probability space $(\Omega , \mathcal{F},\mathbb{P})$. We shall write $\textbf{C}_\mathbb{R}$ for the space of real-valued continuous paths on $[0,\infty)$, and we write $\textbf{C}_{L^1(\mathbb{R})}$ for the space of $L^1(\mathbb{R})$-valued continuous paths on $[0,\infty)$ in the sense that
$u \in \textbf{C}_{L^1(\mathbb{R})}$ if and only if $u(t,\cdot) \in L^1(\mathbb{R})$ for all $t\geq0$ and $t\mapsto \langle u(t,\cdot ) , \phi \rangle$ is in $\textbf{C}_\mathbb{R}$ for all $\phi \in C_b(\mathbb{R})$. Moreover, we let $\textbf{C}^\uparrow_\mathbb{H}$ denote the space of increasing continuous paths with values in the positive half-line $\mathbb{H}:=[0,\infty)$. Given the initial data $s_0\in \mathbb{H}$ and $v_0 \in L^1 \cap L^\infty(\mathbb{R})$ with $\mathrm{supp}(v_0)\subseteq [s_0,\infty)$, we then define an admissible class for the paths of any candidate solution by
\begin{align*}
	\textbf{C}(v_0,s_0):= \Bigl\{  &(v,s) \in \textbf{C}_{L^1(\mathbb{R})}\times \textbf{C}^\uparrow_{\mathbb{H}} :  s(0)=s_0, \,v(0,\cdot)=v_0 \\ &\;\;\;\mathrm{supp}(v(t,\cdot))\subseteq [s(t),\infty), \; \Vert v(t,\cdot)\Vert_{L^\infty} \leq \Vert v_0 \Vert_{L^\infty} \;\text{for all } t \geq 0     \Bigr\}.
\end{align*}
Next, we then define $\mathfrak{C}(v_0,s_0)$ as the space of $\mathcal{F}_t$-adapted stochastic processes with paths in $\textbf{C}(v_0,s_0)$, where $\mathcal{F}_t$ denotes the complete, right-continuous filtration generated by the Brownian motion $W$.

\begin{defn}[Continuous weak solutions]\label{def:weak_cont} Let $s_0\in\mathbb{H}$ and $v_0\in L^1\cap L^\infty(\mathbb{R})$ with $\mathrm{supp}(v_0)\subseteq[s_0,\infty)$ and $v_0\leq 0$ (a.e.). We say that $(v,s) \in \mathfrak{C}(v_0,s_0)$ is a (global) continuous weak solution to the supercooled Stefan problem \eqref{eq:classical_noise} if, almost surely,
	\begin{align}\label{eq:weak_cont}
		\int_{s(t)}^{\infty}&v(t,x) \phi(t,x) \mathrm{d}x -  \int_{s_0}^{\infty}\! v_0(x) \phi(0,x) \mathrm{d}x = \int_0^t \int_{s(r)}^{\infty }v(r,x)\partial_r \phi(r,x) \mathrm{d}x \mathrm{d}r\nonumber \\
		&+\kappa\!\int_0^t\! \int_{s(r)}^{\infty }\! v(r,x) \partial_{xx}\phi(r,x) \mathrm{d}x\mathrm{d}r +\theta\!\int_0^t\! \int_{s(r)}^{\infty } \!v(r,x) \partial_x\phi(r,x) \mathrm{d}x\mathrm{d} W_r  \nonumber \\ 
		&\qquad\qquad\quad + \lambda\kappa\! \int_0^t \phi(r,s(r))\mathrm{d} s(r),
	\end{align}
	for all $t\geq 0$ and all test functions $\phi 
	\in C^{1,2}_b([0,\infty)\times \mathbb{R})$. For an $\mathcal{F}_t$-stopping time $\tau$, we say that $(v,s)$ is a continuous weak solution on $[0,\tau)$ if the above holds with $t\land \tau$ in place of $t$.
\end{defn}

We note that the Dirichlet boundary condition of \eqref{eq:classical_noise} is implicitly imposed by the fact that the boundary points are allowed to be in the support of the test functions. Moreover, the shape of the heat flux across the interface is dictated by the line integral along the freezing front in \eqref{eq:weak_cont}. This specification only requires the front to be continuous and increasing, unlike the derivative constraint in \eqref{eq:classical_noise} which can never be satisfied when $\theta \neq 0$. Taking $\phi \equiv 1$ in  \eqref{eq:weak_cont}, we see that the weak formulation implies
\begin{equation}\label{eq:energy_balance}
	s(t) -  s_0 = \displaystyle \frac{1}{\lambda \kappa} \Bigr( \displaystyle\int_{s(t)}^\infty \!v(t,x)\mathrm{d}x -  \displaystyle\int_{s_0 }^\infty \!v_0(x)\mathrm{d}x \Bigr),
\end{equation}
for all $t\geq 0$. This constraint enforces conservation of heat, as can also be derived from first principles in the following way. During the phase transition at the interface, the advance of the freezing front by an amount $\Delta s$ releases $\varrho \ell \Delta s$ units of latent heat. At the same time, an increase of the total temperature by $\Delta u$ absorbs $c \varrho \Delta u $ units of sensible heat. By the first law of thermodynamics, we must therefore have
\begin{equation}\label{eq:energy_bal_first_princip}
	\Delta s = \frac{c \varrho \Delta u}{\varrho \ell }= \frac{c}{\ell}  \Delta u=  \frac{1}{\lambda \kappa }\Delta u,
\end{equation}
where we have used the definition of $\lambda$ and $\kappa$ in \eqref{eq:clambda_kappa}. Since the total increase in temperature on $[0,t]$ is
$
\int_{s(t)}^\infty \!v(t,x)\mathrm{d}x - \int_{s_0 }^\infty \!v_0(x)\mathrm{d}x,
$
the conclusion follows.

In view of \eqref{eq:energy_balance}, we are led to consider a probabilistic representation of \eqref{eq:weak_cont} where the freezing front is linked to the (stochastic) loss of mass for a suitable Brownian particle that is absorbed upon hitting the freezing front. Our first result makes this precise and confirms that any weak solution in the sense of Definition \ref{def:weak_cont} can be characterised in this way.

\begin{thm}[Probabilistic McKean--Vlasov representation]\label{thm:prob_rep_cont}
	If $(v, s)$ is a (global) continuous weak solution in the sense of Definition \ref{def:weak_cont}, then $(v,s)$ is characterised by
	\begin{equation}\label{eq:prob_rep_0}
		\left\{
		\begin{aligned}
			v(t,x)\mathrm{d}x &=\int_{s_0}^\infty \mathbb{P}(X^y_t \in \mathrm{d}x , \,t<\tau^y \mid \mathcal{F}_t) v_0(y)\mathrm{d}y \\
			s(t) &= s_0 - \frac{1}{\lambda \kappa} \int_{s_0}^\infty\mathbb{P}(\tau^y\leq t \mid \mathcal{F}_t)v_0(y)\mathrm{d}y,
		\end{aligned}
		\right.
	\end{equation}
	for all $t\geq0$, almost surely, where
	\begin{equation}\label{eq:prob_rep}
		\left\{
		\begin{aligned}
			\tau^y &= \inf\{ t \geq 0 : X^y_t \leq s(t) \} \\[3pt]
			X^y_t &= y + \textstyle\sqrt{2\kappa-\theta^2} B_t + \theta W_t,
		\end{aligned} \right.
	\end{equation}
	for a Brownian motion $B$ independent of $W$. If $(v,s)$ is a local continuous weak solution on $[0,\tau)$, for some $\mathcal{F}_t$-stopping time $\tau$, then the above holds on $[0,\tau)$.
\end{thm}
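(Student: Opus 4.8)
The plan is to build, on top of the given weak solution $(v,s)$, the system of absorbed ``heat particles'' appearing in \eqref{eq:prob_rep} and to reconstruct $v$ from their conditional laws and $s$ from their conditional absorption probabilities. Concretely, one enlarges the probability space to carry a Brownian motion $B$ independent of $W$, sets $X^y_t=y+\sqrt{2\kappa-\theta^2}\,B_t+\theta W_t$ and $\tau^y=\inf\{t\geq0:X^y_t\leq s(t)\}$ — a stopping time for the enlarged filtration $\mathcal G_t:=\mathcal F_t\vee\sigma(B_r:r\leq t)$, under which $W$ is still a Brownian motion by independence of $B$ (immersion). Writing $\mu^y_t(\mathrm dx):=\mathbb P(X^y_t\in\mathrm dx,\,t<\tau^y\mid\mathcal F_t)$ and $M^y_t:=\mathbb P(\tau^y\leq t\mid\mathcal F_t)$, the whole proof reduces to: (i) showing that $\mu^y$ solves a conditional Fokker--Planck (Zakai-type) SPDE with a Dirichlet condition along the moving front $s$; (ii) integrating this against $v_0(y)\,\mathrm dy$; and (iii) matching the resulting equation with \eqref{eq:weak_cont} via a uniqueness argument.

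For step (i) I would apply It\^o's formula to $\phi(t\wedge\tau^y,X^y_{t\wedge\tau^y})$ for arbitrary $\phi\in C^{1,2}_b$, split off the absorbed part using $X^y_{\tau^y}=s(\tau^y)$ (pathwise continuity of $X^y$ and $s$), and take $\mathbb E[\,\cdot\mid\mathcal F_t]$. Three conditional-expectation manipulations are needed: the $\mathrm dB$-martingale term vanishes after conditioning (condition first on $\sigma(W)$, under which $B$ remains a Brownian motion independent of the $W$-part of the integrand); the $\mathrm dW$-term passes to $\theta\int_0^t\langle\mu^y_r,\partial_x\phi(r,\cdot)\rangle\,\mathrm dW_r$ by the standard fact that the $\mathcal F$-optional projection of a $\mathcal G$-stochastic integral against $W$ is the integral of the optional projection of the integrand; and the absorption term becomes $\int_0^t\phi(r,s(r))\,\mathrm dM^y_r$ once one checks $\mathbb P(\tau^y\leq r\mid\mathcal F_t)=M^y_r$ for $r\leq t$ (because $W_{(r,t]}$ is independent of $\{\tau^y\leq r\}\in\mathcal G_r$), which in particular shows $M^y$ is nondecreasing and admits a good increasing modification. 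The outcome is that for all $\phi\in C^{1,2}_b$
\[
\langle\mu^y_t,\phi(t,\cdot)\rangle=\phi(0,y)+\int_0^t\!\langle\mu^y_r,\partial_r\phi+\kappa\partial_{xx}\phi\rangle\,\mathrm dr+\theta\!\int_0^t\!\langle\mu^y_r,\partial_x\phi\rangle\,\mathrm dW_r-\int_0^t\!\phi(r,s(r))\,\mathrm dM^y_r .
\]
This is the technical heart: everything about the transport noise being \emph{parabolic} ($|\theta|<\sqrt{2\kappa}$, so $2\kappa-\theta^2>0$ and $X^y$ is a genuine diffusion) and about the random moving Dirichlet boundary enters here.

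For steps (ii)--(iii) I would integrate the displayed identity against $v_0(y)\,\mathrm dy$ over $y\in(s_0,\infty)$ (a Fubini argument, legitimate since $\phi$ and its derivatives are bounded and $\mu^y,M^y$ are (sub)probabilities), obtaining that $\mathfrak v_t(\mathrm dx):=\int_{s_0}^\infty\mu^y_t(\mathrm dx)\,v_0(y)\,\mathrm dy$ (supported in $[s(t),\infty)$, with $\mathfrak v_0=v_0$) together with $\mathfrak L_t:=\int_{s_0}^\infty M^y_t\,v_0(y)\,\mathrm dy$ satisfies the same equation as $v$ in \eqref{eq:weak_cont}, except that the boundary term $\lambda\kappa\int_0^t\phi(r,s(r))\,\mathrm ds(r)$ is replaced by $-\int_0^t\phi(r,s(r))\,\mathrm d\mathfrak L_r$. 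Subtracting this from \eqref{eq:weak_cont} and writing $w:=v-\mathfrak v$, $A_t:=\lambda\kappa(s(t)-s_0)+\mathfrak L_t$, the difference $w$ (supported in $[s(t),\infty)$, with $w_0=0$ and, for $t>0$, an $L^2$-density) solves a linear transport--diffusion SPDE with Dirichlet condition along $s$ and boundary source $\int_0^t\phi(r,s(r))\,\mathrm dA_r$, where taking $\phi\equiv1$ identifies $A_t=\langle w_t,1\rangle$. Applying It\^o's formula to $\Vert w_t\Vert_{L^2}^2$, the boundary source drops out because $w$ vanishes along $s$, the stochastic transport term is a martingale, and the net second-order coefficient $2\kappa-\theta^2$ is strictly positive, so the remaining terms are dissipative; together with $w_0=0$ this forces $w\equiv0$, i.e.\ $v=\mathfrak v$, and hence $A\equiv0$, i.e.\ $\mathfrak L_t=-\lambda\kappa(s(t)-s_0)$. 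These are precisely the identities \eqref{eq:prob_rep_0}. I expect step (i) — deriving the conditional Fokker--Planck SPDE and rigorously justifying the conditional-expectation interchanges and the identification of the boundary-flux/absorption term along the random moving front — to be the main obstacle, with the $L^2$ uniqueness estimate a close second, the delicate point there being the low regularity near $t=0$ of merely $L^1\cap L^\infty$ data on a moving domain, which one handles by starting the estimate at a positive time and passing to the limit. Finally, the local statement on $[0,\tau)$ follows by running the same argument with $t\wedge\tau$ throughout, since all objects above are defined pathwise up to the stopping time and the weak formulation on $[0,\tau)$ is stable under this localisation.
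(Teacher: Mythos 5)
Your overall architecture --- derive a conditional Fokker--Planck (Zakai-type) equation for $\mu^y_t(\mathrm{d}x)=\mathbb{P}(X^y_t\in\mathrm{d}x,\,t<\tau^y\mid\mathcal{F}_t)$, integrate against $v_0$, subtract from \eqref{eq:weak_cont}, and kill the difference by an energy estimate --- is the same as the paper's, and your steps (i)--(ii) (the conditional-expectation interchanges for the $\mathrm{d}B$- and $\mathrm{d}W$-integrals, the identification of the absorption term via $X^y_{\tau^y}=s(\tau^y)$, and the Fubini step) are sound in outline. The genuine gap is in step (iii). You propose to apply It\^o's formula to $\Vert w_t\Vert_{L^2}^2$ for $w=v-\mathfrak{v}$ and to discard the boundary source $\int_0^t\phi(r,s(r))\,\mathrm{d}A_r$ ``because $w$ vanishes along $s$''. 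Neither half of this is available: $w$ is only a weak solution with an $L^1\cap L^\infty$ density supported on the moving half-line $[s(t),\infty)$, so (a) the It\^o formula for its squared $L^2$-norm is not justified without first mollifying, and (b) once you mollify, the test function no longer vanishes at the front, so the boundary source contributes a term of the form $\int_0^t w_\varepsilon(r,s(r))\,\mathrm{d}A_r$, where $w_\varepsilon(r,s(r))$ is a local average of $w(r,\cdot)$ near $s(r)$ --- there is no trace theorem, and no pointwise Dirichlet condition in Definition \ref{def:weak_cont}, that would make this small. Your suggested remedy (starting the estimate at a positive time) addresses initial-time regularity, which is not the obstruction; the obstruction sits at the moving boundary for all $t$.

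The paper's proof is engineered precisely around this point. It passes to the moving frame $\mu_t(A):=\nu(t,A+s(t))$, so the front sits at the origin, and mollifies with the \emph{Dirichlet} heat kernel $G_\varepsilon(x,\cdot)$ on the half-line: since $G_\varepsilon(x,0)=0$, both the absorption term on the probabilistic side and the line integral $\lambda\kappa\int_0^t\phi(r,s(r))\,\mathrm{d}s(r)$ on the PDE side vanish identically before any limit is taken, so no identification of the flux with the absorption measure and no trace of $w$ is ever needed. The price is a transport term in $\mathrm{d}s(r)$ (from the moving frame) and an image-charge error $\mathrm{e}^\varepsilon_t=\langle\mu_t-\tilde\mu_t,2p_\varepsilon(x+\cdot)\rangle$; these are handled by running the energy estimate one derivative down, on the antiderivative $U^\varepsilon_t(x)=-\int_x^\infty u^\varepsilon_t(y)\,\mathrm{d}y$ with the weight $e^{-s(t)}$, where the boundary contribution appears with the good sign $-U^\varepsilon_t(0)^2\le 0$ and the parabolicity $\theta^2<2\kappa$ supplies the dissipation. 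Unless you reproduce some version of this device (or another mollification compatible with the moving Dirichlet boundary), your key estimate does not close as written.
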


Observe that \eqref{eq:prob_rep_0}--\eqref{eq:prob_rep} amounts to a conditional McKean--Vlasov problem, since the freezing front $t\mapsto s(t)$ is determined by the conditional law of the hitting time $\tau^y$ for each $y$ in the support of $v_0$. We shall study this McKean--Vlasov problem in its own right in Section \ref{sect:McKean}, and we stress that all the results of the present section rely on this analysis. For $\theta=0$, the McKean--Vlasov point of view is explored in \cite{bayraktar, cuchiero, DNS, hambly_ledger_sojmark_2018, nadtochiy_shkolnikov_2017}.

Already in the case $\theta=0$, Theorem \ref{thm:prob_rep_cont} and the same arguments as in \cite[Theorem 1.1]{hambly_ledger_sojmark_2018} tell us that continuous weak solutions \eqref{eq:weak_cont} can fail to exist globally in time. Thus, we shall develop a more general weak formulation in the next subsection, without the assumption of continuity, and we will then show that global weak solutions may be obtained from the associated conditional McKean--Vlasov problem. This is similar to \cite{DNS} who studied the classical formulation \eqref{eq:classical_soln} in the presence of jump discontinuities, by taking an unconditional analogue of the aforementioned McKean--Vlasov problem (with $\theta=0$) as the definition of solution. Here we are chiefly interested in $\theta \neq 0$ and we take a different route by deriving appropriate global weak formulations.

\subsection{Allowing for temperature discontinuities}\label{sect:temp_discont}

From here onwards, we shall only insist on a c\`adl\`ag structure for the temperature profile and the freezing front. At a discontinuity time $t>0$, there will be an instantaneous phase transition advancing the freezing front by a non-zero amount $\Delta s(t):= s(t) - s(t-)$ with the temperature jumping from supercooled, i.e.~$v(t-,x) <0 $, to frozen, i.e.~$v(t,x)=0$, for all $x\in (s(t-),s(t)]$. Away from the interface, we assume that the temperature profile is unaffected, meaning that $v(t,x)=v(t-,x)$ for all $x>s(t)$. For any given $\phi \in C^{1,2}_b([0,\infty)\times \mathbb{R})$, we then have that the temperature profile tested against $\phi$ undergoes a jump of size
\begin{equation}\label{eq:tested_jump_size}
	\int_{s(t)}^{\infty}v(t,x) \phi(t,x) \mathrm{d}x - 	\int_{s(t-)}^{\infty}v(t-,x) \phi(t,x) \mathrm{d}x  = - 	\int_{s(t-)}^{s(t)}v(t-,x) \phi(t,x) \mathrm{d}x.
\end{equation}
At the same time, the flux across the interface is dictated by the corresponding line integral along the freezing front, namely $\int_0^t\phi(r,s(r-))\mathrm{d}s(r)$, which undergoes a jump of size $\phi(t,s(t-))\Delta s(t)$. Since conservation of heat energy must still hold, the constraint \eqref{eq:energy_bal_first_princip} has to apply also at jump times, yielding
\begin{equation}\label{eq:simplified_energy_balance}
	\Delta s(t) = \frac{c}{\ell }\Delta u(t) = - \frac{1}{\lambda \kappa} \int_{s(t-)}^{s(t)} \!v(t-,x)\mathrm{d}x,
\end{equation}
due to $v(t,x)=v(t-,x)$ for $x>s(t)$ (as in \eqref{eq:tested_jump_size} with $\phi \equiv 1$). This of course agrees with enforcing \eqref{eq:energy_balance} at $t$ and $r< t$, 
taking the difference, and sending $r\uparrow t$. It follows that the aforementioned jump of the line integral is given by a multiple $1/\lambda \kappa$ of
\begin{equation}\label{eq:line_int_jump}
	-  \int_{s(t-)}^{s(t)} \!v(t-,x)\phi(t,s(t-))\mathrm{d}x.
\end{equation}
Since \eqref{eq:tested_jump_size} and \eqref{eq:line_int_jump} are in general \emph{not} equal, the continuous weak formulation \eqref{eq:weak_cont} fails to make sense in the presence of temperature discontinuities. By correcting for the difference between \eqref{eq:tested_jump_size} and \eqref{eq:line_int_jump}, we arrive at a consistent weak formulation in Definition \ref{def:weak_jumps} below. For a stylised illustration of the emergence of a temperature discontinuity, see Figure \ref{fig:jump_illustration}.

\begin{figure}
	\centering
	\hspace{-25pt}\includegraphics[width=0.43\textwidth]{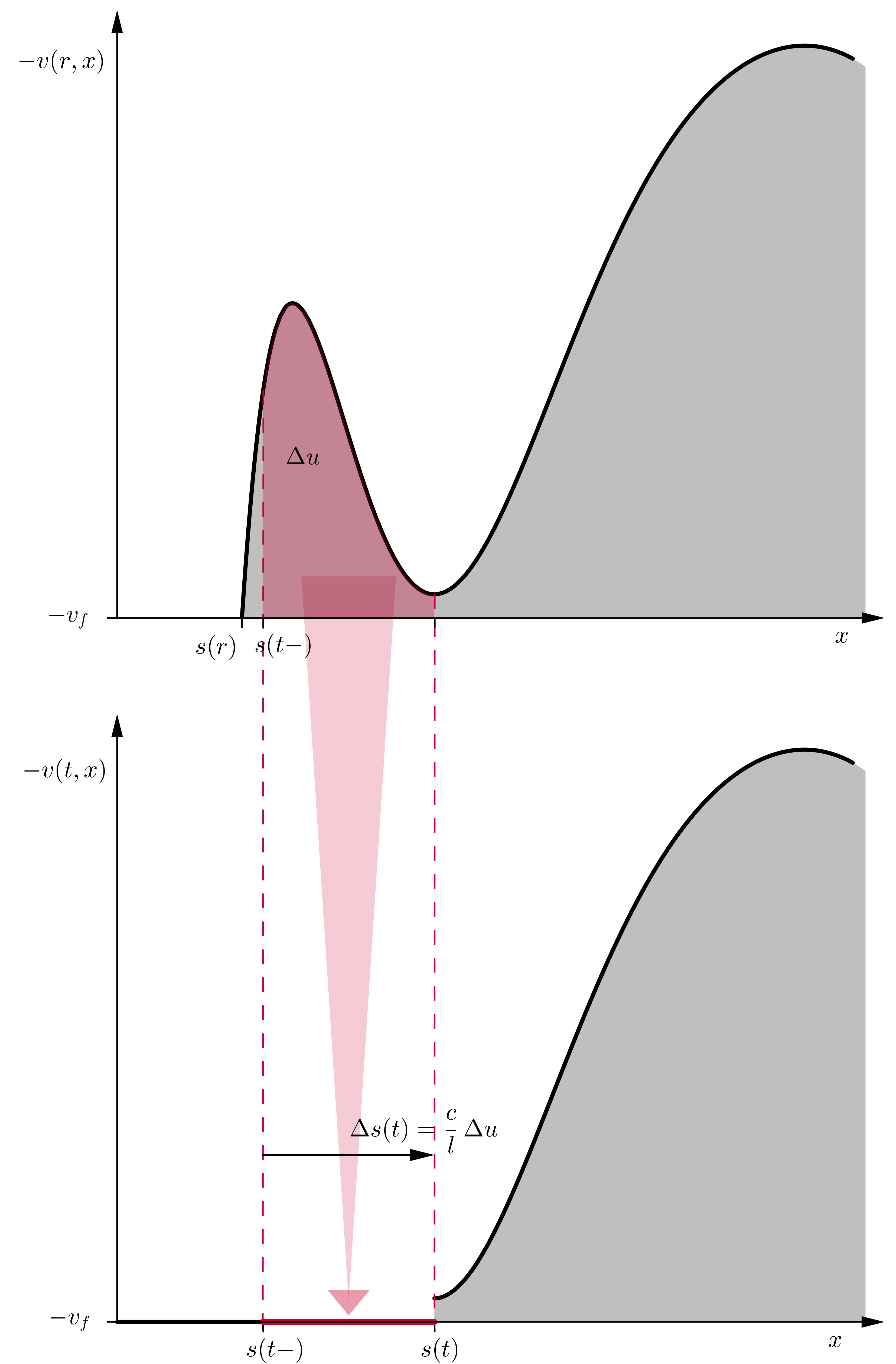}\vspace{-6pt}
	\caption{Illustration of a jump discontinuity in the temperature profile at time $t$, along with the corresponding instantaneous advance of the freezing front according to \eqref{eq:simplified_energy_balance}. In the upper picture, $s(r)\uparrow s(t-)$ as $r\uparrow t$.}\label{fig:jump_illustration}\vspace{-5pt}
\end{figure}

\subsection{C\`adl\`ag weak formulation}

Let $\textbf{D}^._\mathbb{R}$ be the space of real-valued c\`adl\`ag paths on $[0,\infty)$ and we write $\textbf{D}_{L^1(\mathbb{R})}$ for the space of $L^1(\mathbb{R})$-valued c\`adl\`ag paths on $[0,\infty)$ in the sense that
$f \in \textbf{D}_{L^1(\mathbb{R})}$ if and only if $f(t,\cdot) \in L^1(\mathbb{R})$ for all $t\geq0$ and $t\mapsto \langle f(t,\cdot ) , \phi \rangle$ is in $\textbf{D}_\mathbb{R}$ for all $\phi \in C_b(\mathbb{R})$. Furthermore, we write $\textbf{D}^\uparrow_\mathbb{H}$ for the space of increasing (i.e., non-decreasing) continuous paths in $\textbf{D}^._\mathbb{R}$ with values in $\mathbb{H}$. Given the initial data $s_0\in \mathbb{H}$ and $v_0 \in L^1 \cap L^\infty(\mathbb{R})$ with $\mathrm{supp}(v_0)\subseteq [s_0,\infty)$, we then define an admissible class
\begin{align*}
	\textbf{D}(v_0,s_0):= \Bigl\{  &(v,s) \in \textbf{D}_{L^1(\mathbb{R})}\times \textbf{D}^\uparrow_{\mathbb{H}}  :  s(0)=s_0, \,v(0,\cdot)=v_0 \\[-4pt] &\;\;\;\mathrm{supp}(v(t,\cdot))\subseteq [s(t),\infty), \; \Vert v(t,\cdot)\Vert_{L^\infty} \leq \Vert v_0 \Vert_{L^\infty} \;\text{for all } t \geq 0      \Bigr\},
\end{align*}
and we let $\mathfrak{D}(v_0,s_0)$ be the space of $\mathcal{F}_t$-adapted processes with paths in 	$\textbf{D}(v_0,s_0)$, where $(\mathcal{F}_t)_{t\geq 0}$ is the complete, right-continuous filtration generated by the Brownian motion $W$.

\begin{defn}[C\`adl\`ag weak solutions]\label{def:weak_jumps} Let $s_0 \in \mathbb{H}$ and $v_0\in L^1 \!\cap\! L^\infty(\mathbb{R})$ with $\mathrm{supp}(v_0)\subseteq[s_0,\infty)$ and $v_0 \leq 0$ (a.e.). We say that $(v,s) \in \mathfrak{D}(v_0,s_0)$ is a (global) c\`adl\`ag weak solution to the supercooled Stefan problem \eqref{eq:classical_noise} if it holds, almost surely, that
	\begin{align}\label{eq:weak_jump}
		\int_{s(t)}^{\infty}&v(t,x) \phi(t,x) \mathrm{d}x -  \int_{s_0}^{\infty}\!\!v_0(x) \phi(0,x) \mathrm{d}x =  \int_0^t \int_{s(r)}^{\infty }v(r,x)\partial_r \phi(r,x) \mathrm{d}x \mathrm{d}r  \nonumber \\
		&\!\! +\kappa\!\int_0^t \int_{s(r)}^{\infty } v(r,x) \partial_{xx }\phi(r,x) \mathrm{d}x\mathrm{d}r+\theta\!\int_0^t\! \int_{s(r)}^{\infty } \!v(r,x) \partial_x \phi(r,x) \mathrm{d}x\mathrm{d} W_{r}  \nonumber \\
		&\!\!+\lambda\kappa\! \int_0^t \!\!\phi(r, s(r-) )\mathrm{d} s(r)+\sum_{0<r\leq t} 	\int_{s(r-)}^{s(r)}\!{v(r-,x)} \bigl(\phi(r,s(r-))-\phi(r,x)\bigr)  \mathrm{d}x,
	\end{align}
	for all $t>0$ and all $\phi \in C_b^{1,2}([0,\infty)\times \mathbb{R})$.
\end{defn}

The domain of this problem is illustrated in Figure \ref{fig:discont_domain} below. As depicted in Figure \ref{fig:jump_illustration}, the emergence of a temperature discontinuity at time $t$ results in a violation of the isothermal boundary condition $v(t,s(t)+)=0$ at the jump time. However, by the c\`adl\`ag property, there can be at most countably many such times for each path. Since the weak formulation is integrated in time, it is thus consistent for \eqref{eq:weak_jump} to implicitly enforce $v(r,s(r)+)=0$ at almost all times, as per the derivation of Definition \ref{def:weak_cont} above.

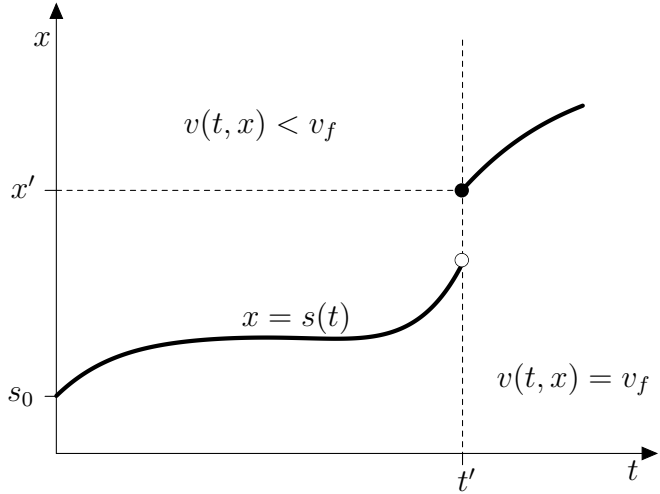
\begin{figure}
	\centering
	\begin{tikzpicture}[line cap=round,line join=round,>=triangle 45,x=1.0cm,y=1.0cm]
		\clip(-1.04,-0.81) rectangle (11.64,6.56);
		\draw [->] (0,0) -- (0,6);
		\draw [->] (0,0) -- (8,0);
		\draw (5.40,0)-- (5.40,-0.15);
		\draw (0,0.76)-- (-0.15,0.76);
		\draw (-0.45,5.75) node[anchor=north west] {$ x $};
		\draw (7.45,0.05) node[anchor=north west] {$ t $};
		\draw (5.19,-0.05) node[anchor=north west] {$ t'$};
		\draw (-0.8,1.02) node[anchor=north west] {$ s_0 $};
		\draw [dash pattern=on 2pt off 2pt] (5.40,5.5)-- (5.40,0);
		\draw[line width=1.6pt, smooth,samples=100,domain=5.42:7.0] plot(\x,{0.04*(\x)^3-0.97*(\x)^2+8.09*(\x)-18.19});
		\draw [dash pattern=on 2pt off 2pt] (0,3.5)-- (5.42,3.5);
		\draw (0,3.5)-- (-0.15,3.5);
		\draw (-0.75,3.85) node[anchor=north west] {$ x' $};
		\draw[line width=1.6pt, smooth,samples=100,domain=0.0:5.37] plot(\x,{0.5*(2.31*(\x)-0.2-1.08*((\x)-0.2)^2+0.1*((\x)-0.2)^3-0.77*sin(((\x)-0.2)*180/pi)-0.41*cos(((\x)-0.2)*180/pi)+0.03*2.718281828^((\x)-0.2)+2)});
		\draw (1.55,4.73) node[anchor=north west] {$ v(t,x) < v_f $};
		\draw (5.71,1.35) node[anchor=north west] {$ v(t,x) = v_f $};
		\draw (2.32,2.15) node[anchor=north west] {$ x = s(t) $};
		\begin{scriptsize}
			\draw [color=black] (5.39,2.572) circle (2.5pt);
			\fill [color=white] (5.39,2.572) circle (2.4pt);
			\fill [color=black] (5.39,3.5) circle (2.7pt);
		\end{scriptsize}
	\end{tikzpicture}\vspace{-10pt}
	\caption{Domain of the supercooled Stefan problem for a single jump discontinuity at time $t^\prime$ with $s(t^\prime)=x^\prime$. In fact, $s(t)$ is only differentiable a.e., but the smooth picture is meant to illustrate the divergence in the rate of increase towards $t^\prime$. We also note that $t^\prime$ could instead be an accumulation point of small subsequent jumps.}\label{fig:discont_domain}
\end{figure}

As for a continuous weak solution, taking $\phi \equiv 1$ in \eqref{eq:weak_jump} returns the criterion \eqref{eq:energy_balance}. Furthermore, if we fix $r>0$ and take $\phi\in C_b^\infty(\mathbb{R})$ such that $\phi(s(r-))=0$, then \eqref{eq:weak_jump} gives
\begin{equation}\label{eq:jump_weak_phi}
	\int_{s(r)}^\infty  \!v(r,x)\phi(x)\mathrm{d}x -  \int_{s(r-)}^{\infty} \hspace{-0.5pt} \!v(r-,x) \phi(x)\mathrm{d}x = - \int_{s(r-)}^{s(r)}\!v(r-,x) \phi(x) \mathrm{d}x.
\end{equation}
Combined with \eqref{eq:energy_balance}, this implies the expression \eqref{eq:simplified_energy_balance}, by letting $\phi(x)\rightarrow 1$ for $x>s(r-)$ and applying dominated convergence. More generally, since \eqref{eq:jump_weak_phi} holds for any $\phi\in C_b^\infty(\mathbb{R})$, we deduce that $v(r,x)=v(r-,x)$ for almost every $x>s(r)$, as desired. Note that this together with \eqref{eq:energy_balance} also gives us \eqref{eq:simplified_energy_balance}.

\subsection{Instability under an external heat transfer}\label{sect:cascades}

The constraint \eqref{eq:simplified_energy_balance} provides an implicit relation for $s(t)$ in terms of only the left-limiting temperature profile $v(t-,\cdot)$ and the corresponding interface $s(t-)$. However, this does not specify the value of $s(t)$ uniquely and nor does it address what phenomenon a discontinuity represents, beyond the fact that it allows for a global notion of solution that conserves heat. As we now explain, there is indeed a natural interpretation, whereby a jump in the freezing front captures the (idealised) resolution of an infinitesimal instability with respect to a vanishing heat transfer from outside the otherwise isolated system.
This, in turn, yields a selection principle for \eqref{eq:simplified_energy_balance} which, as we shall see in Theorem \ref{thm:minimality} below, will be realised by a particular c\`adl\`ag weak solution.

Fixing $t>0$, we let the current state of the system be described by its left-limiting temperature profile $v(t-,\cdot)$ and interface $s(t-)$. We then apply a small external amount of heat to the system, raising the temperature to $0$ on an $\varepsilon$-neighbourhood of the interface and hence initiating freezing so that the front $s$ advances to $s(t-)+\varepsilon$, for a small $\varepsilon>0$. Now, the freezing on $(s(t-),s(t-)+\varepsilon)$ releases an amount of latent heat $\varrho \ell \varepsilon $. Since it is an isolated system (aside from the initial heat transfer), it follows as in \eqref{eq:energy_bal_first_princip} that the temperature goes to zero in a right-neighbourhood of the interface, causing a further advance of the freezing front to $s^{1}(t;\varepsilon)>s(t-)+\varepsilon$ given by
\begin{equation}\label{eq:inductive_freezing_step0}
	\varepsilon = - \frac{1}{\lambda \kappa} \int_{s(t-)+\varepsilon}^{s^{1}(t;\varepsilon)} v(t-,x)\mathrm{d}x.
\end{equation}
But then this freezing yields an additional latent heat release of $\varrho \ell (s^{1}(t;\varepsilon)- s^0(t;\varepsilon))$, where $s^0(t;\varepsilon):=s(t-)+\varepsilon$, heating the liquid to zero near the interface and further advancing the freezing front to $s^{2}(t;\varepsilon)>s^{1}(t;\varepsilon)$ given by 
\[
s^{1}(t;\varepsilon)- s^0(t;\varepsilon) = -\frac{1}{\lambda \kappa} \int_{s^{1}(t;\varepsilon)}^{s^{2}(t;\varepsilon)} v(t-,x)\mathrm{d}x.
\]
Continuing in this way, we obtain the recursion
\begin{equation}\label{eq:inductive_freezing}
	s^{n}(t;\varepsilon)- s^{n-1}(t;\varepsilon) = -\frac{1}{\lambda \kappa} \int_{s^{n}(t;\varepsilon)}^{s^{n+1}(t;\varepsilon)} v(t-,x)\mathrm{d}x, \quad n\geq 1,
\end{equation}
which we note determines $s^{n+1}(t;\varepsilon)>s^{n}(t;\varepsilon)$ uniquely, given $s^{n}(t;\varepsilon)$ and $s^{n-1}(t;\varepsilon)$, since $v(t-,x)<0$ for $x>s(t-)$. By telescoping, \eqref{eq:inductive_freezing_step0} and  \eqref{eq:inductive_freezing} give
\[
s^{n}(t;\varepsilon) = s(t-)  - \frac{1}{\lambda \kappa} \int_{s(t-) + \varepsilon}^{s^{n+1}(t;\varepsilon)} v(t-,x)\mathrm{d}x,
\]
so the final position $s(t;\varepsilon)$ of the interface satisfies
\begin{equation}\label{eq:limit_freeze_front}
	s(t;\varepsilon)   = s(t-) - \frac{1}{\lambda \kappa} \int_{s(t-) + \varepsilon}^{s(t;\varepsilon)} v(t-,x)\mathrm{d}x,\quad s(t;\varepsilon):=\lim_{n\rightarrow \infty} s^{n}(t;\varepsilon).
\end{equation}
Observe that in \eqref{eq:limit_freeze_front}, the temperature increase on $(s(t-),s(t-)+\varepsilon)$ is unaccounted for, as it arose from an external heat transfer. By sending the magnitude of this heat transfer to zero, we bring back conservation of heat: indeed, applying dominated convergence in \eqref{eq:limit_freeze_front}, the limiting outcome $\lim_{\varepsilon \downarrow 0} s(t; \varepsilon)$ obeys the constraint \eqref{eq:simplified_energy_balance} and hence is a valid value for $s(t)$.

\begin{rem}[Physical jumps] Arguing analogously to \cite[Proposition 2.4]{hambly_ledger_sojmark_2018}, we get
	\begin{equation}\label{eq:min_jump}
		\lim_{\varepsilon \downarrow 0} s(t; \varepsilon) = s(t-) + \inf 
		\, \Bigl\{ y>0 : - \frac{1}{\lambda \kappa} \int_{s(t-)}^{s(t-)+y} \!v(t-,x)\mathrm{d}x < y \Bigr\}
	\end{equation}
	which we recognise as a variant of the \emph{physical} jump condition from \cite{DIRT_SPA} that has been used in recent probabilistic approaches to the supercooled Stefan problem \cite{cuchiero, DNS,  hambly_ledger_sojmark_2018, nadtochiy_shkolnikov_2017}.
\end{rem}

\section{Main results}\label{sect:main_results}

As above, we stress that this section is concerned with initial temperature profiles $v_0\leq0$ (a.e.), since the equilibrium freezing temperature is set to $v_f=0$. Next, we can observe that, in order to have solutions in $\mathfrak{D}(v_0,s_0)$, we must enforce that $v_0$ is stable under an infinitesimal external heat transfer in the sense of Section \ref{sect:cascades}. Taking $s(0-):=s_0$ and $v(0-,\cdot):=v_0$ in the definition of $ s(0;\varepsilon)$, it follows from \eqref{eq:min_jump} that what we need is
\begin{equation}\label{eq:ic_constraint}
	- \frac{1}{\lambda \kappa} \int_{s_0}^{s_0+y} \!v_0(x)\mathrm{d}x < y  \quad \text{infinitely often,} \quad \text{as } y \downarrow 0.
\end{equation}
Note that, if this is not satisfied, one can first compute $\tilde{s}_0:=\lim_{\varepsilon \downarrow 0} s(0; \varepsilon)$, shift the initial value of the front to $\tilde{s}_0>s_0$, and then the new initial condition $v_0\vert_{[\tilde{s}_0,\infty)}$ satisfies \eqref{eq:ic_constraint} for $\tilde{s}_0$. When the initial condition satisfies the constraint \eqref{eq:ic_constraint}, we have the following result.

\begin{thm}[Probabilistic c\`adl\`ag solutions]\label{thm:existence_jump}
	Let $s_0\in \mathbb{H}$ and $v_0 \in L^1 \cap L^\infty(\mathbb{R})$ with $\mathrm{supp}(v_0)\subseteq [s_0,\infty)$. If \eqref{eq:ic_constraint} is satisfied, then there exist weak solutions $(v,s)\in \mathfrak{D}(v_0,s_0)$ in the sense of Definition \ref{def:weak_jumps} such that the probabilistic representation \eqref{eq:prob_rep_0}--\eqref{eq:prob_rep} holds globally in time. 
\end{thm}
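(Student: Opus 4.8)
The plan is to construct a solution of the conditional McKean--Vlasov problem \eqref{eq:prob_rep_0}--\eqref{eq:prob_rep} that automatically satisfies the physical jump condition \eqref{eq:min_jump} at every discontinuity, and then to verify that any such solution yields a c\`adl\`ag weak solution in the sense of Definition \ref{def:weak_jumps}. Since the particle system \eqref{eq:prob_rep} is driven by $\sqrt{2\kappa-\theta^2}\,B+\theta W$ with $B\perp W$, I would condition on $W$ throughout: given a frozen realisation of $W$, the problem becomes a (time-inhomogeneous, but still conditionally Markovian) analogue of the deterministic supercooled problem studied in \cite{DNS}, with $s(t)$ determined by the $\mathcal F_t$-conditional crossing probabilities of the absorbed particles $X^y$. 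The first step is therefore to set up this conditional formulation rigorously on the given probability space: fix the Brownian filtration $(\mathcal F_t)$, and seek an $\mathcal F_t$-adapted increasing c\`adl\`ag front $s$ together with the induced sub-probability flows $-v(t,\cdot)\mathrm dx = \int_{s_0}^\infty \mathbb P(X^y_t\in\mathrm dx,\ t<\tau^y\mid\mathcal F_t)v_0(y)\mathrm dy$.

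The construction itself I would carry out by the now-standard scheme used for the physical solution of the deterministic problem (cf.\ \cite{DNS} and the minimality/cascade analysis in \cite{cuchiero, hambly_ledger_sojmark_2018}): either (i) a fixed-point/monotone iteration on the space of candidate fronts — starting from $s\equiv s_0$ and repeatedly recomputing $s$ via the second line of \eqref{eq:prob_rep_0}, which is monotone and hence converges to a minimal front — or (ii) a discretisation in time where at each step the jump is resolved through the cascade recursion \eqref{eq:inductive_freezing_step0}--\eqref{eq:inductive_freezing} with $\varepsilon\downarrow0$, so that \eqref{eq:min_jump} holds by construction; one then passes to the limit in a suitable (Skorokhod $M_1$) topology on the front, using tightness from the uniform bound $\|v(t,\cdot)\|_{L^\infty}\le\|v_0\|_{L^\infty}$ and the energy balance \eqref{eq:energy_balance}. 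The condition \eqref{eq:ic_constraint} is exactly what guarantees the scheme does not jump immediately at $t=0$, i.e.\ that $s(0)=s_0$ is consistent; it is the conditional analogue of the admissibility hypothesis in the deterministic theory. Throughout, measurability and adaptedness with respect to $(\mathcal F_t)$ must be tracked, since the conditional laws $\mathbb P(\cdot\mid\mathcal F_t)$ are the new feature relative to \cite{DNS}; here I would lean on the analysis promised in Section \ref{sect:McKean}, which the excerpt says underlies all results of this type.

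It then remains to check that the pair $(v,s)$ so constructed actually solves \eqref{eq:weak_jump}. For a fixed test function $\phi\in C^{1,2}_b$, apply It\^o's formula to $\phi(t,X^y_t)\1_{t<\tau^y}$, using that $X^y$ has generator $\kappa\partial_{xx}$ and martingale part $\theta\partial_x\,\mathrm dW$ (the cross-variation with $W$ producing exactly the $\theta$-term), take conditional expectations given $\mathcal F_t$, integrate against $v_0(y)\mathrm dy$, and identify the boundary local-time term at $\{X^y=s\}$ with the line integral $\lambda\kappa\int_0^t\phi(r,s(r-))\mathrm ds(r)$ via the energy balance; the jump correction $\sum_{0<r\le t}\int_{s(r-)}^{s(r)}v(r-,x)(\phi(r,s(r-))-\phi(r,x))\mathrm dx$ appears precisely because the absorbed mass at a jump time is deposited at $s(r-)$ rather than spread over $(s(r-),s(r)]$, matching the discussion around \eqref{eq:tested_jump_size}--\eqref{eq:line_int_jump}. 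The main obstacle I anticipate is the passage to the limit in the construction: one must show the limiting front is still adapted and that the conditional crossing probabilities converge along with it — equivalently, that no mass is ``lost'' at jump times and that the physical condition \eqref{eq:min_jump} survives the limit — and this is where the $M_1$-type continuity of the absorption map and a careful treatment of the conditional laws (rather than the unconditional ones as in \cite{DNS}) will require the most work.
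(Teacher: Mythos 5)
Your plan follows essentially the same route as the paper: construct an adapted increasing c\`adl\`ag front as a fixed point of the monotone map $s\mapsto\Gamma[s]$ (your option (i); the paper packages this as Tarski's fixed point theorem on a complete lattice of adapted processes, Theorem \ref{thm:min_soln} and Lemma \ref{lem:lattic}), then verify \eqref{eq:weak_jump} by applying It\^o's formula to the absorbed particles, taking conditional expectations given $\mathcal{F}_t$, and splitting the hitting-time contribution into the times where $\Delta s\neq 0$ (producing the jump-correction sum) and where $\Delta s=0$ (producing the line integral via the McKean--Vlasov relation), exactly as in \eqref{eq:1st_weak}--\eqref{eq:4th_weak}. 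Two small remarks on that second half: the boundary contribution is an absorption term, not a local time (the particles are killed, not reflected), and minimality/the physical jump condition \eqref{eq:min_jump} is not needed for this theorem --- any solution of \eqref{eq:MV} with $s(0)=s_0$ works, together with Lemma \ref{prop:no_blow_up_timing} to guarantee that $\nu(t,\mathrm{d}x)$ has an $L^1\cap L^\infty$ density so that $(v,s)\in\mathfrak{D}(v_0,s_0)$.

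The one place where your plan glosses over a genuine difficulty is the assertion that \eqref{eq:ic_constraint} ``is exactly what guarantees the scheme does not jump immediately at $t=0$.'' In the deterministic case this is essentially the admissibility condition from \cite{DNS}, but with $\theta\neq 0$ it is not automatic: as the paper notes via the example from \cite{nadtochiy_shkolnikov_2020}, a transport perturbation of order $\sqrt{t}$ near $t=0$ can change whether an initial jump occurs, and the common noise $W$ produces exactly such perturbations on a dense set of times. Showing that the minimal fixed point satisfies $s(0)=s_0$ under \eqref{eq:ic_constraint} is Proposition \ref{prop:initial_aligned}, whose proof requires a law-of-the-iterated-logarithm control on $W$ near $0$ combined with a quantitative lower bound \eqref{eq:PJC_bound_initial} on $\nu_0$ extracted from the iterates $s^{(n)}=\Gamma[s^{(n-1)}]$; one also has to show that the pointwise limit of the iterates is actually a fixed point, which uses that $X^y$ immediately recrosses the barrier after $\tau^{y}$ so that $\tau^{y,s^{(n)}}\to\tau^{y,\check{s}}$. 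Relatedly, your framing of ``freezing a realisation of $W$'' and solving a deterministic problem per path is dangerous for adaptedness; the paper avoids this by working throughout with the map $\Gamma$ on $\mathcal{F}_t$-adapted processes and using $\mathbb{P}(\tau^{y,s}\le t\mid\mathcal{F}_\infty)=\mathbb{P}(\tau^{y,s}\le t\mid\mathcal{F}_t)$. Your option (ii) (time discretisation with cascades and an $M_1$ limit) is the route the paper deliberately does not take, precisely because the topological and particle-system arguments of \cite{cuchiero} do not transfer readily to the common-noise setting.
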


Next, we address the emergence of blow-ups in the sense that, with strictly positive probability, an instability develops in finite time and the continuous weak formulation breaks down. Of course, this blow-up is not the end of things, as the solution can continue to evolve in a c\`adl\`ag manner according to Definition \ref{def:weak_jumps}.

For any given c\`adl\`ag weak solution $(v,s)\in \mathfrak{D}(v_0,s_0)$, we define
\begin{equation}\label{eq:blow_up_time}
	\varsigma:= \inf \bigl\{  t>0 : v(t,\cdot ) \neq   v(t-,\cdot ) \bigr\} = \inf \bigl\{  t>0 : s(t) \neq   s(t- ) \bigr\}.
\end{equation}
As usual, $\inf \emptyset = +\infty$. The equivalence of the two expressions in \eqref{eq:blow_up_time} is an immediate consequence of the weak formulation in Definition \ref{def:weak_jumps}.

\begin{thm}[Temperature discontinuities]\label{thm:jump_nonzero_prob}
	Suppose $v_0(y)<-\lambda \kappa$ at some point $y$ in $(s_0,\infty)$ and let $v_0$ be right-continuous at that point. Then we have
	\[
	\mathbb{P}( \varsigma < +\infty) > 0
	\]
	for any c\`adl\`ag weak solution $(v,s)\in \mathfrak{D}(v_0,s_0)$ in the sense of Definition \ref{def:weak_jumps}.
\end{thm}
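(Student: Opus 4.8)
The plan is to argue by contradiction. Suppose $\mathbb{P}(\varsigma<\infty)=0$. Then, almost surely, $(v,s)$ has no discontinuities, hence is a continuous weak solution on all of $[0,\infty)$ in the sense of Definition~\ref{def:weak_cont}, and Theorem~\ref{thm:prob_rep_cont} yields the probabilistic representation \eqref{eq:prob_rep_0}--\eqref{eq:prob_rep} globally in time. By right-continuity of $v_0$ at $y_0$ together with $v_0(y_0)>\lambda\kappa$, fix $\delta,\eta>0$ with $v_0\ge\lambda\kappa+\eta$ on the slab $J:=[y_0,y_0+\delta]$. The contradiction will come from exhibiting an event of positive probability and a finite time $T$ on it at which the (left-limiting) profile is \emph{unstable} in the sense of Section~\ref{sect:cascades}, i.e.\
\[
-\tfrac1{\lambda\kappa}\int_{s(T)}^{s(T)+y}v(T,x)\,\mathrm dx\ \ge\ y\qquad\text{for all sufficiently small }y>0 ,
\]
since then \eqref{eq:min_jump} (with $v(T-,\cdot)=v(T,\cdot)$ and $s(T-)=s(T)$ by the assumed continuity) forces $\Delta s(T)>0$, contradicting the continuity of $s$.

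The mechanism exploits the crucial structural feature of \eqref{eq:prob_rep} that the Brownian motion $B$ is common to all starting points, so $X^y_t=y+\xi_t$ with $\xi_t:=\sqrt{2\kappa-\theta^2}\,B_t+\theta W_t$ a Brownian motion of variance $2\kappa$: the transport noise $\theta W$ displaces the entire surviving profile rigidly, while the $\sqrt{2\kappa-\theta^2}\,B$ part contributes only an independent Gaussian smearing of scale $O(\sqrt t\,)$ once one conditions on $\mathcal F_t$. By \eqref{eq:energy_balance}, $s(t)\le s_{\max}:=s_0+\|v_0\|_{L^1}/(\lambda\kappa)$ for all $t$. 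Fix a small horizon $t_0>0$; on a positive-probability event let $W$ perform over $[0,t_0]$ a large excursion, of the sign that brings mass \emph{towards} the interface, with terminal value finely tuned and running extremum attained near $t_0$, while $B$ stays in a narrow tube. On such an event the interface is driven rapidly upward — the rigidly transported mass lying below $J$ is absorbed — and by some time $T\le t_0$ the interface comes to sit just below the rigidly transported supercritical slab $J$, which by then (a) has been displaced so that its lower edge abuts $s(T)$, (b) has been smeared only on the scale $\sqrt{(2\kappa-\theta^2)t_0}$, and (c) has not been eroded by absorption, because those particles stayed strictly above the increasing interface on $[0,T)$. Choosing $t_0$ small enough that $\sqrt{(2\kappa-\theta^2)t_0}\ll\eta\delta/(\lambda\kappa)$ renders the smearing negligible on the bulk of the slab, so that $-v(T,\cdot)\ge\lambda\kappa+\tfrac12\eta$ on an interval of length of order $\delta$ abutting $s(T)$; this gives the instability inequality above, hence the desired contradiction, and so $\mathbb{P}(\varsigma<\infty)>0$.

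I expect the heart of the argument to be the probabilistic bookkeeping behind (a)--(c), complicated by the fact that the interface $s$ is itself $\mathcal F_t$-adapted and endogenous and that the hitting times $\tau^y$ run against this moving barrier. Two points in particular: (i) one must ensure that the transported supercritical mass genuinely \emph{abuts} the interface, rather than approaching it with a macroscopic gap — the naive first-passage bound only controls survival with probability $\tfrac12$, which is too weak since $v_0$ is assumed only $>\lambda\kappa$ rather than $>2\lambda\kappa$ — so the excursion level of $W$ must be matched to the (random) position of the interface, most naturally via an intermediate-value or first-passage argument over that level together with a uniform lower bound on the conditional survival probabilities $\mathbb{P}(t<\tau^y\mid\mathcal F_t)$ for $y\in J$; (ii) because the relevant first-passage time is a stopping time for $\sigma(B,W)$ and not for $\mathcal F_t=\sigma(W)$, the density identity \eqref{eq:prob_rep_0} should be applied at a deterministic time, with the good event described purely through the paths of $W$ and $B$ on $[0,t_0]$ and the slab-position condition imposed via $W_{t_0}$ and $\min_{r\le t_0}W_r$. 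Both points amount to a localization reducing the dynamics near $J$ to an auxiliary supercooled problem against an essentially prescribed barrier, and this localization is where the real work lies.
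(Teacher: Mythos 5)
Your setup is the same as the paper's: assume $\mathbb{P}(\varsigma<\infty)=0$, pass to a continuous weak solution, invoke Theorem \ref{thm:prob_rep_cont}, and then use a positive-probability tube event for $W$ (steep downward excursion, $B$ controlled) to transport the supercritical slab to the interface with negligible erosion and smearing. This matches the paper's Lemma \ref{Lem:Stage1} almost exactly, including the device of a $W$-measurable time at which the auxiliary shift $c+\rho W_t-\alpha L_t$ reaches zero. The gap is in your endgame. You want to conclude by showing $-v(T,\cdot)\geq\lambda\kappa+\eta/2$ on an interval \emph{abutting} $s(T)$ and then declaring the profile unstable. Two problems. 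First, the instability criterion behind \eqref{eq:min_jump} requires $\nu(T,[s(T),s(T)+y])\geq\lambda\kappa y$ for \emph{all} sufficiently small $y$, and this cannot be extracted from your construction: even ignoring absorption, the convolution of $(\lambda\kappa+\eta)\mathbf{1}_J$ with the Gaussian kernel of $B$ is only about half its bulk value at the lower edge of the transported slab, and absorption only makes this worse; pushing the interface strictly inside the slab to cure the edge effect destroys the survival probabilities of exactly the particles you need. You flag this yourself in (i), but the proposed fix (tuning the excursion level plus ``uniform lower bounds on the conditional survival probabilities'') does not resolve the tension --- particles that survive \emph{and} land within $o(1)$ of the moving barrier at time $T$ are necessarily a small fraction, so no argument of this shape can reach the critical constant $\lambda\kappa$ from the hypothesis $v_0>\lambda\kappa$ alone. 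Second, even granting the density bound, ``instability at $T$ forces $\Delta s(T)>0$'' is not a consequence of the formula \eqref{eq:min_jump}; it is Proposition \ref{prop:min_jump}, whose proof needs the restart lemma and the $\hat W_h\leq-\sqrt h$ argument, and your proposal does not supply it.

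The idea you are missing is the paper's Lemma \ref{Lem:Stage2}: for any Borel set $A$, on the relevant event one has
\begin{equation*}
\frac{\alpha}{2}\,\bar{\nu}(t,A)^2 \;\leq\; \int_A x\,\bar{\nu}(t,\mathrm{d}x)+\varepsilon ,
\end{equation*}
a mass--moment inequality obtained by optional stopping and the self-consistency $L_s=\mathbb{P}(\tau\leq s\mid\mathcal{F}_s)$, integrated over all future times. This replaces any pointwise statement at the interface: the transported configuration has mass $\approx(\alpha^{-1}+\delta)h$ on $(0,h)$ and moment $\approx\tfrac12(\alpha^{-1}+2\delta)h^2$, and substituting these into the inequality yields the numerical contradiction $\alpha\delta(1-\gamma)^2\leq 4\gamma$ directly, with no boundary-density control and no appeal to a jump-selection principle. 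Your first stage is sound and coincides with the paper's; the contradiction must come from this forward-in-time dynamic inequality rather than from a static supercriticality of the profile at the front.
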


The above confirms that temperature discontinuities are an inherent feature of the supercooled Stefan problem with noise. This is very different from the deterministic problem \eqref{eq:classical_soln}, for which, e.g., \cite[Theorem 1.1]{bayraktar} and \cite[Proposition 2.4]{LS} give simple sufficient criteria guaranteeing global continuity even if the initial temperature profile is supercooled below $-\lambda \kappa$. 

Note that Theorem \ref{thm:jump_nonzero_prob} is a general statement for all c\`adl\`ag weak solutions. Next, we will single out a special---uniquely defined---solution, which exhibits a minimal increase in temperature over time. For this solution, we are able to show that temperature discontinuities are resolved in accordance with the mechanism in Section \ref{sect:cascades}.

\begin{thm}[Minimal temperature increase]\label{thm:minimality}
	Let $s_0\in \mathbb{H}$ and $v_0 \in L^1 \cap L^\infty(\mathbb{R})$ with $\mathrm{supp}(v_0)\subseteq [s_0,\infty)$ and suppose \eqref{eq:ic_constraint} holds. Then there exists a probabilistic c\`adl\`ag solution  $(v,s)\in \mathfrak{D}(v_0,s_0)$ of minimal total temperature increase, i.e, for any other probabilistic c\`adl\`ag solution $(\tilde{v},\tilde{s})$ from Theorem \ref{thm:existence_jump}, we have
	\[
	\int_{s_0}^\infty v_0(x)\mathrm{d}x \leq	\int_{s(t)}^\infty v(t,x)\mathrm{d}x \leq \int_{\tilde{s}(t)}^\infty \tilde{v}(t,x)\mathrm{d}x , \quad \text{for all}\quad t\geq0,
	\]
	almost surely. Furthermore, at every $\mathcal{F}_t$-stopping time $\tau$, this solution satisfies
	\begin{equation}\label{eq:equality_jumps}
		s(\tau) = \lim_{\varepsilon \downarrow 0}  s(\tau;\varepsilon)
	\end{equation}
	on $\{\tau \in (0, \infty) \}$ almost surely.
\end{thm}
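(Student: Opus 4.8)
The plan is to construct the minimal solution as a pointwise infimum over the family of probabilistic c\`adl\`ag solutions produced by Theorem~\ref{thm:existence_jump}, and then to identify it with the solution obtained by a specific monotone scheme. Concretely, I would start from the conditional McKean--Vlasov problem \eqref{eq:prob_rep_0}--\eqref{eq:prob_rep} and set up a Picard-type iteration on the freezing front: define $s^{(0)}(t)\equiv s_0$ and, given $s^{(k)}$, let $X^{y,(k)}_t = y + \sqrt{2\kappa-\theta^2}B_t + \theta W_t$ with absorption time $\tau^{y,(k)}=\inf\{t: X^{y,(k)}_t \le s^{(k)}(t)\}$, and then update
\begin{equation*}
	s^{(k+1)}(t) := s_0 + \frac{1}{\lambda\kappa}\int_{s_0}^\infty \P\bigl(\tau^{y,(k)}\le t \mid \mathcal F_t\bigr) v_0(y)\,\mathrm dy.
\end{equation*}
Because a larger front can only absorb particles sooner, the map $s \mapsto$ (new front) is monotone, and starting from the smallest admissible front $s^{(0)}\equiv s_0$ forces $s^{(k)}\uparrow s^{\min}$ pointwise and a.s.\ (using that each $s^{(k)}$ is increasing and bounded by the total-mass bound from \eqref{eq:energy_balance}). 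One then shows the increasing limit $s^{\min}$ is c\`adl\`ag, solves the fixed-point equation, and — via Theorem~\ref{thm:prob_rep_cont}'s converse direction as developed in Section~\ref{sect:McKean} — yields a c\`adl\`ag weak solution in the sense of Definition~\ref{def:weak_jumps}; minimality of $s^{\min}$ among \emph{all} solutions from Theorem~\ref{thm:existence_jump} follows because any such solution is a fixed point of the same monotone map and hence dominates the iterates $s^{(k)}$ by induction. The chain of inequalities on $\int_{s(t)}^\infty v(t,x)\,\mathrm dx$ is then immediate from \eqref{eq:energy_balance}, since a smaller front corresponds to a smaller total temperature increase, and the lower bound $\int_{s_0}^\infty v_0 \le \int_{s(t)}^\infty v(t,\cdot)$ is just monotonicity of $t\mapsto s(t)$ fed back through \eqref{eq:energy_balance}.

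The second assertion — that at every $\mathcal F_t$-stopping time $\tau$ one has $s(\tau) = \lim_{\e\downarrow 0} s(\tau;\e)$ on $\{\tau\in(0,\infty)\}$ — I would prove by combining the physical-jump characterisation \eqref{eq:min_jump} with minimality. First, at any jump time the left-hand side $\Delta s(\tau) = s(\tau)-s(\tau-)$ must satisfy the energy balance \eqref{eq:simplified_energy_balance}, which by the Remark after \eqref{eq:limit_freeze_front} means $\Delta s(\tau) \ge \lim_{\e\downarrow 0}s(\tau;\e) - s(\tau-)$ is possible a priori, but I claim the minimal solution realises equality. The key observation is that the cascade \eqref{eq:inductive_freezing} describing $s(\tau;\e)$ is exactly the discrete-time shadow of the McKean--Vlasov fixed point restricted to the instant $\tau$: resolving the instability by the smallest jump compatible with \eqref{eq:min_jump} is precisely what the monotone iteration $s^{(k)}\uparrow s^{\min}$ does when it passes through time $\tau$. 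To make this rigorous at a general stopping time (not just a deterministic jump time), I would use that the minimal solution inherits a strong-Markov-type structure from the independence of $B$ and $W$ and the $\mathcal F_t$-conditional construction, so that the analysis can be localised to $[\tau,\infty)$ with $v(\tau-,\cdot)$ and $s(\tau-)$ as new initial data; applying the minimal-jump identity \eqref{eq:min_jump} at that fresh instant, together with the fact that $s^{\min}$ makes the smallest admissible advance, gives $s(\tau) = s(\tau-) + (\lim_{\e\downarrow0}s(\tau;\e)-s(\tau-))$.

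The main obstacle, I expect, is establishing the minimal-jump identity \eqref{eq:equality_jumps} at an \emph{arbitrary} $\mathcal F_t$-stopping time rather than at the (at most countably many, but random) actual discontinuity times. On $\{\tau$ is a continuity point$\}$ both sides equal $s(\tau-)=s(\tau)$ and there is nothing to prove, so the content is on the event that $\tau$ coincides with a jump; but the set of jump times is itself random, so one cannot simply enumerate it and argue time-by-time. The cleanest route is probably to prove the inequality $s(\tau)\le s(\tau-)+\inf\{y>0: -\tfrac1{\lambda\kappa}\int_{s(\tau-)}^{s(\tau-)+y}v(\tau-,x)\,\mathrm dx < y\}$ for \emph{every} stopping time by approximating $\tau$ from above by stopping times taking countably many values (so that each approximand reduces to the deterministic-time analysis pathwise), passing to the limit using right-continuity of $s$ and of $\e\mapsto s(\cdot;\e)$; the reverse inequality $s(\tau)\ge \lim_{\e\downarrow0}s(\tau;\e)$ holds for every c\`adl\`ag solution because any smaller jump would violate the instability bound \eqref{eq:min_jump}. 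A secondary technical point is verifying that the increasing limit $s^{\min}$ of c\`adl\`ag increasing processes is again c\`adl\`ag and that the conditional laws $\P(\tau^{y,(k)}\le t\mid\mathcal F_t)$ converge appropriately — this should follow from monotone convergence together with the absence of atoms in the hitting-time laws of $X^y$ away from blow-up, but care is needed precisely at the jump times where such atoms appear.
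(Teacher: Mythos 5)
Your first half is essentially the paper's route: the paper builds the minimal solution of the conditional McKean--Vlasov problem via the monotone map $\Gamma$ (Theorem \ref{thm:min_soln}, Lemma \ref{lem:lattic}, with the iteration $s^{(n)}=\Gamma[s^{(n-1)}]$ appearing in Proposition \ref{prop:initial_aligned} and Remark \ref{rem:M1}), uses \eqref{eq:ic_constraint} to get $s(0)=s_0$, converts to a c\`adl\`ag weak solution via Theorem \ref{thm:existence_jump}, and reads off the minimal temperature increase from \eqref{eq:energy_balance}. That part of your proposal is sound.

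The second assertion, \eqref{eq:equality_jumps}, is where your argument has genuine gaps. First, the lower bound $s(\tau)\geq \lim_{\varepsilon\downarrow 0}s(\tau;\varepsilon)$ is not automatic ``because any smaller jump would violate the instability bound'': \eqref{eq:min_jump} is merely a formula for the limit of the cascade, not an a priori constraint on solutions. The paper proves this bound as Proposition \ref{prop:min_jump}, and the proof is substantive: it requires the restart lemma (Lemma \ref{lem:restart}) to recentre the problem at $\tau$, and then an argument exploiting that $\hat{W}_h\leq-\sqrt{h}$ infinitely often as $h\downarrow 0$ to contradict right-continuity of the restarted front. Without this, your claim that ``on continuity points there is nothing to prove'' also fails, since one must rule out a latent instability ($\lim_{\varepsilon\downarrow0}s(\tau;\varepsilon)>s(\tau-)$) precisely at times where $s$ does \emph{not} jump. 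Second, your plan for the upper bound --- approximate $\tau$ from above by countably-valued stopping times and ``reduce to the deterministic-time analysis'' --- does not work as stated: the deterministic-time case is no easier (the jump times are random, and the approximants see $v(\tau_n-,\cdot)$ rather than $v(\tau-,\cdot)$), and your identification of the cascade with ``the discrete-time shadow of the fixed point at the instant $\tau$'' is a heuristic, since the iterates $s^{(n)}$ are continuous and jumps only emerge in the limit. The paper's actual argument (Theorem \ref{prop:min_soln_jumps}) is a competitor construction: if the jump at $\tau$ strictly exceeds the physical value on a non-null event, restart at $\tau$ from the physical value $\hat{s}_0$, note that the shifted initial measure then satisfies \eqref{eq:initial_aligned} so Proposition \ref{prop:initial_aligned} yields a restarted solution starting exactly at $\hat{s}_0$, and paste it back to obtain a global solution strictly below $s$, contradicting minimality. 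That restart-and-paste mechanism, together with the Brownian lower-function argument for Proposition \ref{prop:min_jump}, is the missing core of your proof.
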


For the deterministic Stefan problem with $\theta =0$, \eqref{eq:equality_jumps} says that, for all $t > 0$, we have $s(t)= \lim_{\varepsilon \downarrow 0}  s(t;\varepsilon)$. Phrased in terms of \eqref{eq:min_jump} and the probabilistic representation \eqref{eq:prob_rep}, a result of this form was recently established in \cite{cuchiero} under slightly more restrictive assumptions. We note that \cite{cuchiero} follows a completely different approach that does not carry over to our setting and which involves analysis of a related particle system (see Remark \ref{rem:physical_minimal} for further details).

When $\theta \neq 0$, it is not immediately clear if \eqref{eq:equality_jumps} implies a corresponding pathwise statement with probability 1. Returning to the deterministic problem (in the form of its probabilistic representation), it was observed in \cite{nadtochiy_shkolnikov_2020} that, perturbing $X$ in \eqref{eq:prob_rep} by a path $f$ satisfying $\liminf_{t\downarrow 0} f(t)/\sqrt{t}>0$, it is possible to have a solution with $s(0)=s_0$ despite violating \eqref{eq:ic_constraint}. That is, a sufficiently fast transport away from the freezing front may be able to compensate for an instability of the temperature profile.

For any given path $t\mapsto W_t(\omega)$ (in an almost sure set of paths), the square root laws for Brownian motion (see \cite{Davis}) yield a dense set of times $t$ at which there is an $\varepsilon>0$ and $c\in(0,1)$ so that $W_{t+r}(\omega)-W_{t}(\omega)\geq c \sqrt{r}$ for all $r\in[0,\varepsilon]$. Thus, one could worry if there are times at which the solution of minimal temperature increase may satisfy $s(t)(\omega)=s(t-)(\omega)$ despite $\lim_{\varepsilon \downarrow 0} s(t;\varepsilon)(\omega) > s(t-)(\omega)$, meaning that an instability does not dictate a discontinuity. We are able to rule this out, with probability 1, by utilising \eqref{eq:equality_jumps} and right-continuity of the solution. As we can base our arguments on Theorem \ref{thm:minimality}, we give the proof already here.

\begin{thm}[Pathwise characterisation of \eqref{eq:equality_jumps}]\label{thm:traject_jump} Assume \eqref{eq:ic_constraint} holds and let $(v,s) \in \mathfrak{D}(v_0,s_0)$ be the solution of minimal temperature increase from Theorem \ref{thm:minimality}. Then we have
	\begin{equation}\label{eq:fragility_lower_bound}
		s(t) = \lim_{\varepsilon \downarrow 0} s(t;\varepsilon), \quad \text{for all}\quad  t\geq0,
	\end{equation}
	with probability 1.
\end{thm}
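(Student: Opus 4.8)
The plan is to upgrade the stopping-time identity \eqref{eq:equality_jumps} from Theorem \ref{thm:minimality} to a pathwise identity valid simultaneously for all $t \geq 0$, on a single almost-sure event. First I would fix notation: for each $t > 0$, write $J(t) := \lim_{\varepsilon \downarrow 0} s(t;\varepsilon) - s(t-)$ for the ``instability jump'' predicted by the cascade mechanism of Section \ref{sect:cascades}, computed from the left-limiting data $v(t-,\cdot)$ and $s(t-)$; by \eqref{eq:min_jump} this equals $\inf\{y > 0 : -\frac{1}{\lambda\kappa}\int_{s(t-)}^{s(t-)+y} v(t-,x)\,\mathrm{d}x < y\}$. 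Since $(v,s)$ is a c\`adl\`ag weak solution, at every $t$ we have $\Delta s(t) := s(t) - s(t-) \geq 0$, and the constraint \eqref{eq:simplified_energy_balance} together with the definition of the physical jump shows $\Delta s(t) \geq J(t)$ always (the energy-balance value is at least the minimal instability value). So \eqref{eq:fragility_lower_bound} amounts to proving the reverse inequality $\Delta s(t) \leq J(t)$ for all $t$, simultaneously, with probability one. The content is entirely at the times where $J(t) > 0$ but a priori $\Delta s(t)$ could be strictly larger, or — more dangerously — at times $t$ where $\Delta s(t) = 0$ while $J(t) > 0$, i.e. an instability that the solution fails to resolve.

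The key difficulty, as flagged in the paragraph preceding the statement, is that Theorem \ref{thm:minimality} gives \eqref{eq:equality_jumps} only at a fixed stopping time $\tau$ on $\{\tau \in (0,\infty)\}$, and the square-root fluctuations of $W$ produce a dense (random) set of ``bad'' times where a fast transport excursion $W_{t+r} - W_t \geq c\sqrt{r}$ could in principle let the solution slip past an instability. So a naive union bound over a countable dense set of deterministic times is not enough — one needs to control an uncountable family. The route I would take is a contradiction/measurability argument: suppose the event $A := \{\exists\, t \geq 0 : \Delta s(t) < J(t)\}$ has positive probability. I would first argue that $A$ can be written as a countable union over rationals $q < q'$ and $\delta > 0$ of events of the form ``there exists $t \in [q,q']$ with $J(t) - \Delta s(t) \geq \delta$,'' and then define $\tau := \inf\{t \in [q,q'] : J(t) - \Delta s(t) \geq \delta\}$ on this event (and $\tau = \infty$ off it). The crux is to show $\tau$ is an $\mathcal{F}_t$-stopping time and that at $\tau$ the instability is still ``live,'' i.e. $J(\tau) \geq \delta > 0$ and yet $\Delta s(\tau) < J(\tau)$, contradicting \eqref{eq:equality_jumps}. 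Here I would use right-continuity of $s$ and of $v(t,\cdot)$ (tested against $\phi\in C_b$) to pass the inequality $J(t_n) - \Delta s(t_n) \geq \delta$ to the limit: the map $t \mapsto J(t)$ is upper semicontinuous from the left in a suitable sense because $v(t-,\cdot)$ and $s(t-)$ are left-continuous, while $\Delta s$ is lower semicontinuous as a $\liminf$ along approximating rationals. The adaptedness of $\tau$ follows because both $s(t)$, $s(t-)$ and the integrals $\int_{s(t-)}^{s(t-)+y} v(t-,x)\,\mathrm{d}x$ defining $J(t)$ are $\mathcal{F}_t$-measurable (the filtration being the completed right-continuous one generated by $W$, and $(v,s)$ being $\mathcal{F}_t$-adapted with c\`adl\`ag paths).

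Once $\tau$ is established as a stopping time with $\P(\tau < \infty) > 0$ and $J(\tau) - \Delta s(\tau) \geq \delta > 0$ on $\{\tau < \infty\}$, I apply Theorem \ref{thm:minimality}: on $\{\tau \in (0,\infty)\}$ we must have $s(\tau) = \lim_{\varepsilon\downarrow 0} s(\tau;\varepsilon)$, i.e. $\Delta s(\tau) = J(\tau)$, almost surely — directly contradicting $J(\tau) - \Delta s(\tau) \geq \delta$. (The case $\tau = q$, the left endpoint, needs a small separate remark: either $q = 0$, handled by the initial-condition stability \eqref{eq:ic_constraint} which gives $J(0) = 0$, or $q > 0$ and one uses left-continuity at $q$ to see the instability was already present slightly before, reducing to the previous case on a shorter interval — or one simply takes a further countable intersection to push $\tau$ strictly inside the interval.) This forces $\P(A) = 0$, which is exactly \eqref{eq:fragility_lower_bound}. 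The main obstacle I anticipate is the regularity bookkeeping for $t \mapsto J(t)$ and the verification that $\tau$ is genuinely a stopping time — in particular showing that the ``instability gap'' $J(t) - \Delta s(t)$ has enough semicontinuity in $t$ that its first passage above a level is a stopping time; everything downstream of that is a clean application of Theorem \ref{thm:minimality}.
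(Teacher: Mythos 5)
Your overall architecture is the right one and matches the paper's: cover the bad set by a first-passage stopping time and contradict \eqref{eq:equality_jumps} there. But two steps are genuinely gapped. First, your opening claim that \eqref{eq:simplified_energy_balance} forces $\Delta s(t)\geq J(t)$ is false: $y=0$ always solves the energy-balance fixed-point relation, so a solution with $\Delta s(t)=0<J(t)$ is perfectly consistent with energy balance --- indeed this is exactly the ``unresolved instability'' you later single out as the dangerous case, so your first paragraph is internally inconsistent. As a consequence, your contradiction argument (which rules out $\{\exists t:\Delta s(t)<J(t)\}$) re-proves the inequality you declared automatic, while the other direction $\Delta s(t)\leq J(t)$ at jump times is never established. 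The paper gets that direction by writing the graph of the discontinuity set of $s$ as a countable union of graphs of stopping times (via \cite[Theorem 3.32]{he_wang_yan}) and applying Theorem \ref{thm:minimality} at each, which yields \emph{equality} $s(t)=\lim_{\varepsilon\downarrow 0}s(t;\varepsilon)$ at every jump time on a single full-measure event; this step is missing from your proposal and cannot be replaced by energy balance.

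Second, the crux ``at $\tau$ the instability is still live'' is asserted but not justified, and your semicontinuity heuristics point the wrong way: the approximating times $t_n$ for the first-passage infimum come from the \emph{right}, so left-continuity of $v(t-,\cdot)$ is irrelevant; what is needed is right-continuity of $t\mapsto\nu(t,[s(t),s(t)+y])$ (which requires knowing $\nu(t,\cdot)$ has a density, Lemma \ref{prop:no_blow_up_timing}). More importantly, passing to the limit only gives a density bound for the \emph{right}-limit measure, $\nu(\tau_k,[s(\tau_k),s(\tau_k)+y])\geq\lambda\kappa y$ for $y\in(0,1/k]$, whereas $J(\tau_k)$ is computed from the \emph{left}-limit data $\nu(\tau_k-,\cdot)$ and $s(\tau_k-)$. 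The gap could in principle close at $\tau_k$ by being absorbed into a jump $\Delta s(\tau_k)>0$. The paper excludes this by converting the bound across the jump via \eqref{eq:nu_t_and_nu_t_minus} (which only yields \eqref{eq:jump-tau_k} for $z\in[\Delta s(\tau_k),\Delta s(\tau_k)+1/k]$) and then invoking Theorem \ref{thm:minimality} \emph{again} at $\tau_k$ to extend the bound down to $z\in[0,\Delta s(\tau_k)]$, whence $\lim_{\varepsilon\downarrow 0}s(\tau_k;\varepsilon)\geq s(\tau_k)+1/k$, contradicting the same theorem. Without this two-stage conversion your contradiction does not go through. (Your treatment of measurability is acceptable as a sketch; the paper makes it precise via an explicit inductive construction showing $\lim_{\varepsilon\downarrow 0}s(t;\varepsilon)$ is progressively measurable.)
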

\begin{proof}
	Since $s$ is c\`adl\`ag and $\mathcal{F}_t$-adapted, we can write
	\[
	\{ (t,\omega )\in (0,\infty)\times \Omega : s(t)(\omega)\neq s(t-)(\omega)\} = \bigcup_{n\geq 1} \{ (t,\omega )\in (0,\infty)\times \Omega : \tau_n(\omega)=t \}
	\]
	for a countable family of strictly positive $\mathcal{F}_t$-stopping times $\tau_n$, see e.g.~\cite[Theorem 3.32]{he_wang_yan}. For each $\tau_n$, we can apply Theorem \ref{thm:minimality} to find an event $\Omega_n \in \mathcal{F}_\infty$ with $\mathbb{P}(\Omega_n)=1$ on which \eqref{eq:equality_jumps} holds. Defining $\Omega_\star \in \mathcal{F}_\infty$ as the countable intersection of these events, it holds for all $\omega \in \Omega_\star $ that
	\begin{equation}\label{eq:traject_equal}
		s(t)(\omega) = \lim_{\varepsilon \downarrow 0} s(t;\varepsilon)(\omega)
	\end{equation}
	for every $t >0$ with $s(t)(\omega)\neq  s(t-)(\omega)$. Throughout the proof, we restrict to $\Omega_\star$.
	
	For simplicity of notation, we will write $\nu(t,\mathrm{d}x)=-v(t,x)\mathrm{d}x$. In Section \ref{sect:cascades}, setting aside the precise interpretation in terms of an external heat transfer, we can analogously define	$\tilde{s}^1(t;\varepsilon):=s(t-)+\frac{1}{\lambda \kappa}\nu(t-,[s(t-),s(t-)+\varepsilon])$ and then $\tilde{s}^{n+1}(t;\varepsilon):=s(t-)+\frac{1}{\lambda \kappa}\nu(t-,[s(t-),\tilde{s}^{n}(t;\varepsilon)+\varepsilon])$. Sending $n\rightarrow \infty$ and subsequently $\varepsilon \downarrow 0$, we again arrive at $\lim_{\varepsilon \downarrow 0}s(t;\varepsilon)$. It is immediate that $\tilde{s}^1$ is progressively measurable. From $(v,s)\in\mathfrak{D}(v_0,s_0)$, we can deduce that $t\mapsto \nu(t,[s(t),s(t)+y])$ is c\`adl\`ag for all $y\geq0$ and $y\mapsto \nu(t,[s(t),s(t)+y])(\omega)$ is continuous for all $t\geq 0$ and $\omega\in\Omega_\star$. By induction, it is therefore straightforward to confirm that each $\tilde{s}^{n}(t;\varepsilon)$ is progressively measurable, and so also $\lim_{\varepsilon \downarrow 0}s(t;\varepsilon)$ is progressively measurable. Now, by \eqref{eq:traject_equal}, we have $s(t)=s(t-)$ and $\lim_{\varepsilon\downarrow 0} s(t;\varepsilon)>s(t-)$ if and only if $\lim_{\varepsilon\downarrow 0} s(t;\varepsilon) > s(t)$, so we are interested in
	\[\tau:= \inf\bigl\{ t>0 : \lim_{\varepsilon\downarrow 0} s(t;\varepsilon) > s(t)  \bigr\}
	\]
	which defines a stopping time by the above. If we can show that $\mathbb{P}(\tau < \infty)=0$, then \eqref{eq:fragility_lower_bound} follows from \eqref{eq:traject_equal}. To this end, we write $ \{ \tau < \infty\}=\cup_{k\geq 1} 	\{ \tau_k < \infty\}$,
	where also each $\tau_k := \inf \{ t>0 : \lim_{\varepsilon\downarrow 0}  s(t;\varepsilon) \geq  s(t) + 1/k \}$ is a stopping time. Suppose $\mathbb{P}(\tau_k<\infty)>0$ for some $k\geq 1$. Then, for arbitrary $\omega \in \{\tau_k<\infty\}$, we have $\lim_{\varepsilon \downarrow 0}s(\tau_k;\varepsilon)(\omega) \geq  s(\tau_k)(\omega) + 1/k$ or $\lim_{\varepsilon \downarrow 0}s(t_n;\varepsilon)(\omega) \geq  s(t_n)(\omega) + 1/k$ for
	some sequence $t_n\downarrow \tau_k(\omega)$. By \eqref{eq:traject_equal} we must have $s(t_n)=s(t_n-)$, as we otherwise have a contradiction. Thus, also $v(t_n-,\cdot)=v(t_n,\cdot)$, and so the definition of $s(t_n;\varepsilon)$ gives $\nu(t_n,[s(t_n),s(t_n)+y])(\omega)\geq \lambda\kappa y$ for all $y\in(0, 1/k]$, for every $n\geq 1$.
	Using the form of $\nu$ and its right-continuity (as well as that of $s$), we can therefore deduce that
	\begin{equation}\label{eq:bound_nu_contradict}
		\nu(\tau_k(\omega),[s(\tau_k(\omega)),s(\tau_k(\omega))+y])(\omega)\geq \lambda\kappa y\quad \text{for all} \quad y\in(0, 1/k].
	\end{equation}
	When $\tau_k=0$, since $s(0)=s_0$, we get $\nu_0([s_0,s_0+y])(\omega)\geq \lambda\kappa y$ for all $y\in(0, 1/k]$, which contradicts \eqref{eq:ic_constraint}, so we may assume $\mathbb{P}(\tau_k\in(0,\infty))>0$. From \eqref{eq:jump_weak_phi}, we have
	\begin{align}\label{eq:nu_t_and_nu_t_minus}
		\nu(\tau_k,[s(\tau_k),s(\tau_k)+y]) &=  \nu(\tau_k-,[s(\tau_k-),s(\tau_k)+y]) -    \nu(\tau_k-,[s(\tau_k-),s(\tau_k)]) \nonumber \\
		&= \nu(\tau_k-,[s(\tau_k-),s(\tau_k)+y]) - \lambda \kappa \Delta s(\tau_k) \nonumber \\
		&=  \nu(\tau_k-,[s(\tau_k-),s(\tau_k-)+z]) - \lambda \kappa \Delta s(\tau_k)
	\end{align}
	with $z=\Delta s(\tau_k)+y$. It follows from \eqref{eq:nu_t_and_nu_t_minus} and \eqref{eq:bound_nu_contradict} that
	\begin{equation}\label{eq:jump-tau_k}
		\nu(\tau_k-,[ s(\tau_k-), s(\tau_k-)+z]) \geq \lambda \kappa z\quad 	\text{for all}\quad  z\in[\Delta s(\tau_k),\Delta s(\tau_k)+1/k].
	\end{equation}
	Applying Theorem \ref{thm:minimality} to $\tau_k$ (on the event $\tau_k\in(0,\infty)$), we have $s(\tau_k)=\lim_{\varepsilon \downarrow 0}s(\tau_k;\varepsilon)$, which, by definition of $s(t;\varepsilon)$, gives that \eqref{eq:jump-tau_k} also holds for all $z\in[0,\Delta s(\tau_k)]$. Again by definition of $s(t;\varepsilon)$, this implies
	$\lim_{\varepsilon \downarrow 0}s(\tau_k;\varepsilon)\geq s(\tau_k-)+(\Delta s(\tau_k) + 1/k) = s(\tau_k)+1/k$, which is a contradiction. Thus, we have $\mathbb{P}(\tau_k<\infty)=0$ for all $k\geq1$, and consequently $\mathbb{P}(\tau <\infty)=0$, as desired.
\end{proof}

While the above analysis is concerned with discontinuities, we end the section with two complementary results on continuity. The first is a simple uniqueness result that may be readily deduced by combining Theorem \ref{thm:prob_rep_cont} and Theorem \ref{thm:existence_jump} with an existing result.

\begin{thm}[Unique continuous solution]\label{thm:no_jump} Let $\operatorname*{ess\,inf}_{x\in (s_0,\infty)}v_0(x) > -\lambda\kappa$. Then there exists a continuous weak solution $(v,s)\in \mathfrak{C}(v_0,s_0)$ and this solution is unique in the larger class of c\`adl\`ag weak solutions given by Definition \ref{def:weak_jumps}.
\end{thm}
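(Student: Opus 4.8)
The plan is to obtain existence directly from Theorem \ref{thm:existence_jump} (after noting that the hypothesis forces the stability condition \eqref{eq:ic_constraint}), then to upgrade the resulting c\`adl\`ag solution to a continuous one using the a priori bound that is built into $\mathfrak{D}(v_0,s_0)$, and finally to reduce uniqueness in the c\`adl\`ag class to a uniqueness statement for the conditional McKean--Vlasov problem \eqref{eq:prob_rep_0}--\eqref{eq:prob_rep}.

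\emph{Existence.} Since $-\frac{1}{\lambda\kappa}\int_{s_0}^{s_0+y}v_0(x)\,\mathrm{d}x \le \frac{\Vert v_0\Vert_{L^\infty}}{\lambda\kappa}\,y < y$ for every $y>0$, the constraint \eqref{eq:ic_constraint} holds, so Theorem \ref{thm:existence_jump} provides a solution $(v,s)\in\mathfrak{D}(v_0,s_0)$ for which the probabilistic representation \eqref{eq:prob_rep_0}--\eqref{eq:prob_rep} holds for all time. To see that this solution is in fact continuous, observe that the $L^\infty$-bound in the definition of $\mathfrak{D}(v_0,s_0)$ passes to left limits (by testing against compactly supported functions and taking the limit in the definition of $\textbf{D}_{L^1(\mathbb{R})}$), so $\Vert v(t-,\cdot)\Vert_{L^\infty}\le\Vert v_0\Vert_{L^\infty}$ for all $t$; feeding this into the jump relation \eqref{eq:simplified_energy_balance}, which is valid for any c\`adl\`ag weak solution, gives at any putative jump time
\[
\Delta s(t) \;=\; -\frac{1}{\lambda\kappa}\int_{s(t-)}^{s(t-)+\Delta s(t)} v(t-,x)\,\mathrm{d}x \;\le\; \frac{\Vert v_0\Vert_{L^\infty}}{\lambda\kappa}\,\Delta s(t),
\]
which is impossible unless $\Delta s(t)=0$. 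Hence $s$ has no jumps, and since $s(t)=s(t-)$ forces $v(t,\cdot)=v(t-,\cdot)$ via \eqref{eq:jump_weak_phi}, neither does $v$; thus $(v,s)\in\mathfrak{C}(v_0,s_0)$ and, with all jump terms vanishing, \eqref{eq:weak_jump} reduces to \eqref{eq:weak_cont}, so $(v,s)$ is a continuous weak solution.

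\emph{Uniqueness.} The displayed estimate used nothing specific to the solution supplied by Theorem \ref{thm:existence_jump}: it shows that \emph{every} element of $\mathfrak{D}(v_0,s_0)$ solving Definition \ref{def:weak_jumps} is automatically a continuous weak solution, because the $L^\infty$-bound is part of $\mathfrak{D}(v_0,s_0)$ and \eqref{eq:simplified_energy_balance} follows from the weak formulation. So it suffices to establish uniqueness within $\mathfrak{C}(v_0,s_0)$. By Theorem \ref{thm:prob_rep_cont}, any continuous weak solution $(v,s)$ is characterized by \eqref{eq:prob_rep_0}--\eqref{eq:prob_rep}, where $v(t,\cdot)$ is moreover an explicit functional of the freezing front $s$; hence it is enough to know that \eqref{eq:prob_rep_0}--\eqref{eq:prob_rep} admits at most one solution with continuous front. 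This is the ``existing result'' step: when $\Vert v_0\Vert_{L^\infty}<\lambda\kappa$ the feedback in the conditional McKean--Vlasov problem is subcritical — heuristically, an advance of the front by $\ell$ absorbs at most $\frac{\Vert v_0\Vert_{L^\infty}}{\lambda\kappa}\,\ell<\ell$ worth of mass, ruling out the self-reinforcing loop that produces blow-up — so the loss-of-mass map is a strict contraction in a suitable pathwise metric on continuous fronts, and a Gr\"onwall/fixed-point argument carried out conditionally on $W$ yields a unique solution. This is exactly the type of subcritical uniqueness available in the common-noise setting (cf.~\cite{LS, nadtochiy_shkolnikov_2017, hambly_ledger_sojmark_2018}) and is set up in Section \ref{sect:McKean}.

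\emph{Main obstacle.} Everything up to the McKean--Vlasov uniqueness is routine bookkeeping with the weak formulation and the a priori bounds; the only substantive point is that last uniqueness statement, whose difficulty lies in controlling the $\mathcal{F}_t$-conditional hitting probabilities $\mathbb{P}(\tau^y\le\cdot\mid\mathcal{F}_t)$ and the dependence of the boundary-crossing estimates on the whole path of $W$. Since Section \ref{sect:McKean} already develops precisely this machinery, in the present proof this step reduces to quoting and applying it.
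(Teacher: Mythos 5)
Your proposal is correct and follows essentially the same route as the paper: verify \eqref{eq:ic_constraint}, get existence from Theorem \ref{thm:existence_jump}, rule out jumps for \emph{every} c\`adl\`ag solution via \eqref{eq:simplified_energy_balance} and the $L^\infty$-bound, and then reduce uniqueness to the weak-feedback (subcritical) uniqueness for the conditional McKean--Vlasov problem obtained through Theorem \ref{thm:prob_rep_cont}. The only minor discrepancy is that the paper does not derive the final uniqueness step from the machinery of Section \ref{sect:McKean} but quotes it directly, ``in complete analogy with'' \cite[Theorem 2.2]{LS_bulletin}, which is precisely the contraction-in-the-weak-feedback-regime argument you sketch.
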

\begin{proof}
	Note that \eqref{eq:ic_constraint} holds, so there exists a c\`adl\`ag weak solution by Theorem \ref{thm:existence_jump}. As observed in \eqref{eq:jump_weak_phi}, any c\`adl\`ag weak solution must satisfy \eqref{eq:simplified_energy_balance}, but the assumption $\operatorname*{ess\,inf}_{x\in (s_0,\infty)}v_0(x) > -\lambda\kappa$ implies that $s(t)=s(t-)$ is the only solution to \eqref{eq:simplified_energy_balance} at every $t>0$, so it follows from this and \eqref{eq:weak_jump} that any c\`adl\`ag weak solution is in $\mathfrak{C}(v_0,s_0)$. Thus, Theorem \ref{thm:prob_rep_cont} applies to every c\`adl\`ag weak solution, and we have $\Vert v_0 \Vert_{L^\infty(\mathbb{R})} < \lambda\kappa $ (recall $v_0\leq 0$ a.e.), so uniqueness follows in complete analogy with \cite[Theorem 2.2]{LS_bulletin}.
\end{proof}

When the condition in Theorem \ref{thm:no_jump} is violated, we know from Theorem \ref{thm:jump_nonzero_prob} that we need to deal with the jump discontinuities in Definition \ref{def:weak_jumps}, and it is unclear whether or not there is a unique solution in general. However, as long as \eqref{eq:ic_constraint} holds, Theorem \ref{thm:minimality} confirms that we have the unique solution of minimal temperature increase $(v,s)\in \mathfrak{D}(v_0,s_0)$. Provided $v_0$ does not dip below the critical value $-\lambda\kappa$ near the initial position of the freezing front $s_0$, the next result shows that this solution $(v,s)$ will evolve continuously for some small time and that it will do so for all time with strictly positive probability.

\begin{thm}[Initial and global continuity]\label{prop:initial_global_cont}
	Let \eqref{eq:ic_constraint} hold with $v_0(x) \geq - \lambda \kappa$ a.e.~in a right-neighbourhood of $s_0$. Then the solution of minimal temperature increase $(v,s)\in \mathfrak{D}(v_0,s_0)$ from Theorem \ref{thm:minimality} satisfies 
	\[
	\mathbb{P}(\varsigma >0)=1\quad \text{and}\quad \mathbb{P}( \varsigma = +\infty)>0.
	\]
\end{thm}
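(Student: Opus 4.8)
The plan is to exploit the probabilistic representation \eqref{eq:prob_rep_0}--\eqref{eq:prob_rep} for the solution of minimal temperature increase, together with the pathwise characterization of jumps from Theorem \ref{thm:traject_jump}, which tells us that $s(t)=s(t-)$ unless $\lim_{\varepsilon\downarrow 0}s(t;\varepsilon)>s(t-)$, i.e.~unless the left-limiting profile exhibits an instability in the sense of \eqref{eq:min_jump}. Thus, to rule out a jump at a time $t$, it suffices to show that $-\frac{1}{\lambda\kappa}\int_{s(t-)}^{s(t-)+y}v(t-,x)\mathrm{d}x < y$ for arbitrarily small $y>0$; equivalently, writing $\nu(t,\mathrm{d}x)=-v(t,x)\mathrm{d}x$ as in the proof of Theorem \ref{thm:traject_jump}, that $\nu(t-,[s(t-),s(t-)+y])<\lambda\kappa y$ infinitely often as $y\downarrow 0$. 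Since $\|v(t,\cdot)\|_{L^\infty}\leq \|v_0\|_{L^\infty}$ is in general $\geq \lambda\kappa$, this must come from a fine estimate on the density of $\nu(t,\cdot)$ near the freezing front, and this is where I expect the work to concentrate.

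For the first claim, $\mathbb{P}(\varsigma>0)=1$, I would argue that for small $t>0$ the conditional density $x\mapsto -v(t,x)$ near $x=s(t)$ is controlled by $v_0$ near $s_0$ via the representation \eqref{eq:prob_rep_0}: the particles $X^y_t = y+\sqrt{2\kappa-\theta^2}\,B_t+\theta W_t$ started from $y$ close to $s_0$ are, up to the common shift $\theta W_t$, Brownian motions absorbed at the moving barrier $s(\cdot)$, and as $t\downarrow 0$ the absorbed density inherits the bound $-v(t,s(t)+y)\lesssim \sup_{z\in\mathrm{nbhd\ of\ }s_0}(-v_0(z)) + o(1)$ uniformly for $y$ in a shrinking neighborhood of the front, by a heat-kernel comparison (reflection/absorption estimates for the half-line, as in the deterministic theory of \cite{DNS, LS}). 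Under the hypothesis $v_0(x)\leq\lambda\kappa$ in a right-neighborhood of $s_0$, this gives $\nu(t-,[s(t-),s(t-)+y])\leq (\lambda\kappa+o(1))y$ with a strict inequality for all sufficiently small $t$ and suitably small $y$; by Theorem \ref{thm:traject_jump} there is then no jump on some $(0,t_0)$, so $\varsigma\geq t_0>0$ almost surely. (One has to be slightly careful that the estimate is uniform in $\omega$, or at least holds for a.e.~$\omega$ with a random but positive $t_0$, which is enough.)

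For the second claim, $\mathbb{P}(\varsigma=+\infty)>0$, the idea is that on the event that the common noise $W$ pushes the particles away from the origin fast enough — concretely on an event of the form $\{W_t \geq \delta\sqrt{t}\ \text{for all } t\in[0,T]\}$ intersected over a suitable covering, which has positive probability by small-ball/large-deviation estimates for Brownian motion — the barrier $s(t)$ stays well separated from the bulk of the absorbed mass, keeping the density $-v(t,s(t)+\cdot)$ below $\lambda\kappa$ uniformly in time. More robustly, since $v_0\leq\lambda\kappa$ near $s_0$ and, away from the front, conditional heat diffusion only spreads and flattens the profile (the $L^\infty$ bound is non-increasing and interior regularization strictly lowers the peak once the support detaches from the barrier), one shows that the absorbed density at the front never reaches the critical slope, so by Theorem \ref{thm:traject_jump} no jump ever occurs. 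The main obstacle, and the step I would spend the most care on, is making the heat-kernel/barrier estimate on $\nu(t,[s(t),s(t)+y])$ rigorous and uniform enough — both the short-time version for claim one and the all-time version on a positive-probability event for claim two — since the barrier $s$ is only continuous and is itself defined implicitly through the McKean--Vlasov fixed point, so one cannot simply quote a clean Gaussian bound and must instead bootstrap from the continuous weak formulation \eqref{eq:weak_cont} (valid up to $\varsigma$) the comparison with the free (unabsorbed) Gaussian density, exactly as in the analyses underlying \cite{DNS, cuchiero, LS} adapted to the conditional setting.
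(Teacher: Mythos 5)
Your overall strategy---control the density of $\nu(t,\cdot)$ near the front and invoke the jump--minimality characterization to rule out discontinuities---is indeed the skeleton of the paper's argument (Lemma \ref{lem:density_control} plus the pathwise jump constraints). However, there are two genuine gaps. First, for $\mathbb{P}(\varsigma>0)=1$: your estimate $-v(t,s(t)+y)\lesssim \sup(-v_0)+o(1)$ is additive and therefore does not close the argument, because the hypothesis $v_0\leq\lambda\kappa$ is \emph{non-strict}; if $v_0$ equals $\lambda\kappa$ on a right-neighborhood of $s_0$, a bound of the form $\lambda\kappa+o(1)$ never yields the strict inequality $\nu(t-,[s(t-),s(t-)+y])<\lambda\kappa\,y$ that jump minimality requires. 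The paper's Lemma \ref{lem:density_control} instead produces a strictly \emph{multiplicative} improvement $V_t\leq(1-\delta(t))\,\alpha^{-1}$ near the front, obtained by quantifying the Gaussian mass that convolution pushes out of the near-front window; this is the step your sketch is missing.

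Second, for $\mathbb{P}(\varsigma=+\infty)>0$: the event $\{W_t\geq\delta\sqrt{t}\ \text{for all}\ t\in[0,T]\}$ has probability zero (Brownian motion is negative infinitely often as $t\downarrow0$), so it cannot carry the argument; the paper instead conditions on a tube $|\rho W_t-mt|<l$ around a steep linear path, and compensates for the allowed dip of size $l$ by comparing with an auxiliary solution whose initial condition is shifted down by $\delta\varepsilon=l$, via a refined no-crossing lemma. More seriously, your ``more robust'' alternative---that interior regularization keeps the front density below the critical slope so that \emph{no jump ever occurs}---cannot be correct as an unconditional statement: it would give $\mathbb{P}(\varsigma=+\infty)=1$, contradicting Theorem \ref{thm:jump_nonzero_prob}, since the hypothesis only constrains $v_0$ near $s_0$ and mass where $v_0>\lambda\kappa$ can be transported to the front by adverse realizations of $W$. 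Finally, you miss the structural ingredient that makes the all-time claim tractable at all: the universal smoothing bound $\Vert V_t\Vert_{L^\infty}\leq 1/\sqrt{2\pi(1-\rho)t}$ of Lemma \ref{prop:no_blow_up_timing}, which rules out blow-ups after the fixed deterministic time $\alpha^2/2\pi(1-\rho)$. This reduces the problem to surviving a compact window, which the paper handles by requiring $W$ to have climbed to a high level $K$ and to stay within $\alpha$ of it, so that a direct Gaussian displacement estimate keeps the near-front mass subcritical there. Without this reduction, your plan would need to control the density uniformly over all time on a single positive-probability event, which is not achievable by the tube construction alone.
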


\section{The associated McKean--Vlasov problem}\label{sect:McKean}

As in Section \ref{sect:temp_discont}, we shall work with the space $\mathbf{D}^{\uparrow}_{\mathbb{H}}$ of increasing (i.e., non-decreasing) right-continuous paths $f:[0,\infty)\rightarrow \mathbb{H}$, where we recall that $\mathbb{H}=[0,\infty)$ denotes the positive half-line. Moreover, we shall work on a suitable probability space $(\Omega , \mathcal{F},\mathbb{P})$ that supports two independent Brownian motions $B$ and $W$ as well as a finite random measure $\nu_0$ on $\mathbb{H}$ and a non-negative random variable $s_0$ both of which are independent of the Brownian motions. We let $(\mathcal{F}_t)_{t\geq0}$ denote the augmented right-continuous filtration generated by $W$, where $\mathcal{F}_0$ is enlarged such that $\nu_0$ and $s_0$ are $\mathcal{F}_0$-measurable. Throughout, it is understood that $\nu_0$ is supported on $[s_0,\infty)$.

Let $\mathfrak{D}^{\uparrow}_{\mathbb{H}}$ denote the space of $\mathcal{F}_t$-adapted processes with paths in $\mathbf{D}^{\uparrow}_{\mathbb{H}}$. We are then interested in solutions $s\in \mathfrak{D}^{\uparrow}_{\mathbb{H}}$ to
\begin{equation}\label{eq:MV}
	\left\{
	\begin{aligned}
		X^y_t &= y + \textstyle\sqrt{2 \kappa-\theta^2} B_t + \theta W_t\\[3pt]
		\tau^y &= \inf\{ t \geq 0 : X^y_t \leq s({\color{black}t}) \} \\[1pt]
		s({\color{black}t}) &= s_{\color{black}0} + \frac{1}{\lambda \kappa }\int_{s_0}^\infty\mathbb{P}(\tau^y\leq t \mid \mathcal{F}_t)\nu_0(\mathrm{d}y).
	\end{aligned} \right.
\end{equation}
Note that, in the setting of Theorem \ref{thm:prob_rep_cont}, the initial position $s_0$ and the initial temperature profile $v_0$ were both deterministic. Here, we allow them to be random, as this is the natural setting when we shall later consider the restarting of the problem. Finally, we stress that the initial measure $\nu_0$ in \eqref{eq:MV} would correspond to $-v_0(y)\mathrm{d}y$ in the previous sections.

Similarly to the map $\Gamma$ considered in \cite[Equation (1.6)]{hambly_ledger_sojmark_2018}, we shall study this problem through fixed points of the mapping $\Gamma: \mathfrak{D}^{\uparrow}_{\mathbb{H}} \rightarrow \mathfrak{D}^{\uparrow}_{\mathbb{H}}$ defined by
\[
\Gamma[s](t):= s_0 + \frac{1}{\lambda \kappa }\int_{s_0}^\infty\mathbb{P}(\tau^{y,s} \leq t \mid \mathcal{F}_\infty)\nu_0(\mathrm{d}y),\quad \text{for } t\geq0,
\]
where
\begin{equation}\label{eq:Gamma_map}
	\left\{
	\begin{aligned}
		X^y_t &= y + \textstyle\sqrt{2 \kappa-\theta^2} B_t + \theta W_t\\[3pt]
		\tau^{y,s} &= \inf\{ t \geq 0 : X^y_t \leq s({\color{black}t}) \} .
	\end{aligned} \right. 
\end{equation}

Since $s\in \mathfrak{D}^{\uparrow}_{\mathbb{H}}$ is $\mathcal{F}_t$-adapted, and since $W$ and $B$ are independent Brownian motions with respect to $\mathcal{F}$, we have $\mathbb{P}(\tau^{y,s} \leq t \mid \mathcal{F}_\infty)=\mathbb{P}(\tau^{y,s} \leq t \mid \mathcal{F}_t)$ almost surely, for every $t\geq 0$. In particular, $\Gamma[s]$ is $\mathcal{F}$-adapted as the filtration is complete. Moreover, using completeness and right-continuity of the filtration, there is a version of $\Gamma[s]$ with paths in $\mathbf{D}^{\uparrow}_{\mathbb{H}}$. Taking this as the definition, we have $\Gamma[s]\in \mathfrak{D}^{\uparrow}_{\mathbb{H}}$, as required.

The above confirms that any fixed point $s\in \mathfrak{D}^{\uparrow}_{\mathbb{H}}$ of $\Gamma$ is a solution to \eqref{eq:MV}. Conversely, any solution $s\in \mathfrak{D}^{\uparrow}_{\mathbb{H}}$ of \eqref{eq:MV} yields a fixed point of $\Gamma$. Indeed, as above, $\mathbb{P}(\tau^{y,s} \leq t \mid \mathcal{F}_\infty)=\mathbb{P}(\tau^{y,s} \leq t \mid \mathcal{F}_t)$ almost surely for every $t\geq 0$. Thus, $s(t)=\Gamma[s](t)$ almost surely for all $t\geq 0$. Both are right-continuous, so they are indistinguishable, and hence $s=\Gamma[s]$ in $\mathfrak{D}^{\uparrow}_{\mathbb{H}}$. Now, for any pair $s, \tilde{s} \in\mathfrak{D}^{\uparrow}_{\mathbb{H}} $, we shall write $s \leq \tilde{s}$ if and only if
\begin{equation}\label{eq:ordering}
	\mathbb{P}(s(r) \leq \tilde{s}(r)\;\text{for all } r\in \mathbb{Q}_+)=1,
\end{equation}
where $\mathbb{Q}_+:=\mathbb{Q} \cap [0,\infty)$.  This is of course equivalent to $\mathbb{P}(s(t) \leq \tilde{s}(t)\;\text{for all } t\geq 0)=1$ by right-continuity. With this ordering, we obtain the following result.

\begin{thm}[Solution lattice]\label{thm:min_soln}
	For any ($\mathcal{F}_0$-measurable) initial conditions $\nu_0$ and $s_0$, the set of solutions $s\in \mathfrak{D}^{\uparrow}_{\mathbb{H}} $ to \eqref{eq:MV}
	forms a lattice with respect to the partial order \eqref{eq:ordering}. In particular, there is a minimal and a maximal solution.
\end{thm}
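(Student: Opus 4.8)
The plan is to identify the solution set of \eqref{eq:MV} with the fixed‑point set $\mathrm{Fix}(\Gamma)$ (as recorded just before the statement) and then run a monotone‑iteration argument; the only non‑routine ingredient is a continuity property of $\Gamma$ along monotone sequences, which is where I expect the real work to be. First I would record that $\Gamma$ is order‑preserving for the partial order \eqref{eq:ordering}: if $s\le\tilde s$, then $\mathbb{P}$‑a.s.\ the higher barrier $\tilde s$ is reached no later, so $\tau^{y,\tilde s}\le\tau^{y,s}$ for every $y$, hence $\mathbb{P}(\tau^{y,s}\le t\mid\mathcal{F}_t)\le\mathbb{P}(\tau^{y,\tilde s}\le t\mid\mathcal{F}_t)$ and, integrating against $\nu_0$, $\Gamma[s]\le\Gamma[\tilde s]$. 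Since $0\le\mathbb{P}(\tau^{y,s}\le t\mid\mathcal{F}_t)\le 1$, every element of the image of $\Gamma$, and in particular every solution, lies in the order interval $[\underline b,\overline b]$ with constant paths $\underline b(t)\equiv s_0$ and $\overline b(t)\equiv s_0+(\lambda\kappa)^{-1}\nu_0(\mathbb{H})$; note also $\underline b\le\Gamma[\underline b]$ and $\Gamma[\overline b]\le\overline b$.

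For the \emph{minimal} solution I would set $\underline s^{(n)}:=\Gamma^n[\underline b]$; by the above this is pointwise non‑decreasing in $n$ and bounded above by $\overline b$, so the pointwise increasing limit $\underline s^{(\infty)}$ exists. An increasing limit of right‑continuous non‑decreasing paths need not be right‑continuous (cf.\ $t\mapsto\min(nt,1)$), so I would pass to the right‑continuous modification $\underline s^{\star}$, which remains $\mathcal{F}_t$‑adapted and non‑decreasing, i.e.\ lies in $\mathfrak{D}^{\uparrow}_{\mathbb{H}}$. Dually, $\overline s^{(n)}:=\Gamma^n[\overline b]$ is pointwise non‑increasing, and its pointwise limit $\overline s^{(\infty)}$ is automatically right‑continuous (an infimum of right‑continuous non‑decreasing paths is right‑continuous). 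Granting the convergence step below, $\underline s^{\star}$ and $\overline s^{(\infty)}$ are fixed points of $\Gamma$; and for any solution $s$, an induction on $n$ using monotonicity gives $\underline s^{(n)}\le s\le\overline s^{(n)}$, so letting $n\to\infty$ through rationals and using right‑continuity of $s$ yields $\underline s^{\star}\le s\le\overline s^{(\infty)}$. Thus these are the minimal and maximal solutions.

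The step that carries the content is that $\Gamma$ commutes with monotone limits: if $s^{(n)}\uparrow s^{(\infty)}$ (resp.\ $\downarrow$) in $\mathfrak{D}^{\uparrow}_{\mathbb{H}}$, then $\Gamma[s^{(n)}]\to\Gamma[s^{(\infty)}]$ pointwise. By conditional monotone/dominated convergence this reduces to $\mathbb{P}(\tau^{y,s^{(n)}}\le t\mid\mathcal{F}_t)\to\mathbb{P}(\tau^{y,s^{(\infty)}}\le t\mid\mathcal{F}_t)$, and hence — for all but the at most countably many $t$ carrying an atom of the limiting passage‑time law, which suffices since both sides are right‑continuous in $t$ — to the a.s.\ convergence $\tau^{y,s^{(n)}}\to\tau^{y,s^{(\infty)}}$. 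One inequality between this limit and $\tau^{y,s^{(\infty)}}$ is immediate from monotonicity; the other requires that the limiting barrier be non‑sticky for $X^y=y+\sqrt{2\kappa-\theta^2}B+\theta W$, which is a Brownian motion of variance $2\kappa$: almost surely, from any level it touches, $X^y$ enters the open half‑line below that level immediately, so the monotone barrier approximations cannot leave a gap in the passage time. I would isolate this as a lemma — continuity of $s\mapsto\tau^{y,s}$ under monotone perturbation, or equivalently a quantitative ``no sticking'' estimate for $X^y$ — and I expect this, rather than the order‑theoretic bookkeeping, to be the main obstacle.

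Finally, for the lattice structure, given two solutions $s^1,s^2$, the pointwise maximum $s^1\vee s^2$ lies in $[\underline b,\overline b]$ and satisfies $\Gamma[s^1\vee s^2]\ge\Gamma[s^i]=s^i$ for $i=1,2$, hence $\Gamma[s^1\vee s^2]\ge s^1\vee s^2$; iterating $\Gamma$ upward from $s^1\vee s^2$ exactly as above produces a solution dominating $s^1$ and $s^2$, and it is the least such because any solution $\ge s^1,s^2$ dominates every iterate $\Gamma^n[s^1\vee s^2]$. Symmetrically, iterating $\Gamma$ downward from $s^1\wedge s^2$ (which satisfies $\Gamma[s^1\wedge s^2]\le s^1\wedge s^2$) yields the greatest solution below $s^1$ and $s^2$. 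Therefore $\mathrm{Fix}(\Gamma)$ is a lattice, with least and greatest elements $\underline s^{\star}$ and $\overline s^{(\infty)}$. (Once the monotonicity and the monotone‑limit continuity are in hand, one could instead invoke a Knaster--Tarski / Kleene‑type fixed point theorem on the $\sigma$‑complete lattice of order‑bounded adapted non‑decreasing paths, but the explicit iteration seems cleaner given the measurability and right‑continuity caveats.)
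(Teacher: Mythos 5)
Your proposal is essentially correct, but it takes a genuinely different route from the paper. The paper's proof is purely order-theoretic: it observes that $\Gamma$ is monotone for the order \eqref{eq:ordering}, proves in Lemma \ref{lem:lattic} that the order-bounded adapted increasing c\`adl\`ag processes form a \emph{complete} lattice (the work there is constructing essential suprema over arbitrary, possibly uncountable, families and upgrading them to adapted right-continuous versions), and then invokes Tarski's fixed point theorem, which requires \emph{no} continuity of $\Gamma$ whatsoever and directly yields that $\mathrm{Fix}(\Gamma)$ is a (complete) lattice. You instead run a Kleene-type monotone iteration, whose price is exactly the step you flag as the main obstacle: continuity of $\Gamma$ along monotone limits, i.e.\ $\tau^{y,s^{(n)}}\to\tau^{y,s^{(\infty)}}$. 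Your parenthetical remark conflates Knaster--Tarski with Kleene; it is precisely Knaster--Tarski that the paper uses, and that is what lets it sidestep your obstacle entirely. That said, your deferred lemma is not vacuous: the paper proves essentially this statement later, in Proposition \ref{prop:initial_aligned}, by the same ``no sticking'' argument you sketch ($X^y$ drops strictly below any level it reaches, and the barrier is non-decreasing, so $X^y_r<\check s(r)$ infinitely often as $r\downarrow\tau^{y,\check s}$), so your route is viable. Two points need care if you pursue it: (i) after replacing the increasing pointwise limit by its right-continuous modification $\underline s^{\star}$, the fixed-point identity $\Gamma[\underline s^{\star}]=\underline s^{\star}$ only comes out at continuity points of the limit (equivalently, at times carrying no atom of the limiting passage-time law), and one must then conclude by right-continuity of both sides, exactly as in the paper's Proposition \ref{prop:initial_aligned}; (ii) your iteration gives the join of two solutions as a limit from $s^1\vee s^2$, which yields a lattice but not immediately a complete one --- Tarski gives completeness of the solution lattice for free, though the theorem as stated only claims a lattice with minimal and maximal elements, so this costs you nothing here.
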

\begin{proof}
	Clearly, $s \leq \tilde{s}$ implies $\Gamma[s]\leq \Gamma[\tilde{s}]$ in $\mathfrak{D}^{\uparrow}_{\mathbb{H}}$. By Lemma \ref{lem:lattic} below and the above observations, the claim follows from Tarski's fixed point theorem.
\end{proof}

\begin{lem}[Completeness]\label{lem:lattic}Consider the set of processes in $\mathfrak{D}^{\uparrow}_{\mathbb{H}}$ that are upper bounded by the random variable $M:=s_0+\nu_0([s_0,\infty))$. This defines a complete lattice with respect to the partial order \eqref{eq:ordering}.
\end{lem}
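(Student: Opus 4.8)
The plan is to work with the sublattice
$L := \{\, s \in \mathfrak{D}^{\uparrow}_{\mathbb{H}} : s \le M \,\}$,
the order being \eqref{eq:ordering}, and to show that every subset $S \subseteq L$ possesses a least upper bound lying in $L$. Since $L$ has a least element (the constant process $\mathbf 0$, which lies in $L$ because $M \ge 0$ almost surely) and a greatest element ($M$ itself, viewed as a time-constant, $\mathcal F_0$-measurable process in $\mathfrak D^{\uparrow}_{\mathbb H}$), the existence of all suprema --- including $\sup\emptyset=\mathbf0$ --- entails, by a routine order-theoretic argument ($\inf S=\sup\{x\in L: x\le s \text{ for all }s\in S\}$, the supremum of the non-empty set of lower bounds), that $L$ also has all infima, so $L$ is a complete lattice. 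I will spend the effort on constructing $\sup S$.

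Fix a non-empty $S\subseteq L$. For each $q\in\mathbb Q_+$, apply the standard construction of the essential supremum to the family $\{\,s(q): s\in S\,\}$ of $[0,M]$-valued random variables. This yields a random variable $\sigma(q)$, realised as a countable supremum $\sup_m s_m^{(q)}(q)$ of members of the family (so $\sigma(q)$ is $\mathcal F_q$-measurable and $\le M$ almost surely), characterised by: $s(q)\le\sigma(q)$ a.s.\ for every $s\in S$; and $\sigma(q)\le Y$ a.s.\ whenever $Y$ is an a.s.\ upper bound for the family. Because every path of every $s\in S$ is non-decreasing, monotonicity of the essential supremum gives $\sigma(q_1)\le\sigma(q_2)$ a.s.\ for rationals $q_1<q_2$, and intersecting over the countably many rational pairs shows $q\mapsto\sigma(q)$ is a.s.\ non-decreasing on $\mathbb Q_+$. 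Now set $s^{*}(t):=\inf_{q\in\mathbb Q_+,\;q>t}\sigma(q)$ for $t\ge0$. Standard facts about right-continuous regularisation of non-decreasing functions give that $s^{*}$ has a.s.\ non-decreasing, right-continuous paths with values in $[0,M]$; moreover $s^{*}(t)$ is $\mathcal F_{t+}$-measurable (for any $u>t$ it equals $\inf_{q\in\mathbb Q\cap(t,u)}\sigma(q)\in\mathcal F_u$, using monotonicity of $\sigma$), hence $\mathcal F_t$-measurable by right-continuity of the filtration. Thus $s^{*}\in L$.

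To see that $s^{*}=\sup S$ in $L$: for the upper-bound property, fix $s\in S$; on an a.s.\ event one has $s(q)\le\sigma(q)$ for all $q\in\mathbb Q_+$, and letting $q\downarrow t$ along rationals and using right-continuity of $s$ yields $s(t)\le s^{*}(t)$ for all $t\ge0$, i.e.\ $s\le s^{*}$ in the sense of \eqref{eq:ordering}. For minimality, let $t$ be any upper bound of $S$ in $L$; then for each fixed $s\in S$ we have $s(q)\le t(q)$ a.s.\ for every $q\in\mathbb Q_+$, so $t(q)$ is an a.s.\ upper bound of $\{s(q):s\in S\}$ and hence $\sigma(q)\le t(q)$ a.s.; intersecting over rationals and letting $q\downarrow t'$ using right-continuity of $t$ gives $s^{*}(t')\le t(t')$ for all $t'$, that is $s^{*}\le t$. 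Greatest lower bounds may be obtained either from the abstract argument mentioned above or, concretely, by the mirror construction replacing the essential supremum with the essential infimum and regularising from the right in the same way. This establishes the claim.

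The only point requiring care is the passage from the pointwise-in-$q$ objects $\sigma(q)$ --- each defined only up to a null set, and each produced from a countable subfamily of $S$ that may depend on $q$ --- to a single genuine process $s^{*}$ that is $\mathcal F_t$-adapted, has the required path regularity, and dominates every (possibly uncountable) member of $S$. This is handled by right-continuous regularisation together with right-continuity of the filtration, and the key structural observation is that the partial order \eqref{eq:ordering} never demands the inequality $s(q)\le\sigma(q)$ to hold simultaneously for all $s\in S$, only separately for each $s$; so no uncountable union of null sets ever enters.
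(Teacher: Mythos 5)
Your proposal is correct and follows essentially the same route as the paper: take the essential supremum of $\{s(q):s\in S\}$ at each rational time, use monotonicity and a countable intersection to get an almost surely non-decreasing function on $\mathbb{Q}_+$, regularise from the right to obtain a c\`adl\`ag adapted process (adaptedness via right-continuity and completeness of the filtration), and verify the least-upper-bound property by again intersecting over rationals and using right-continuity of any competing upper bound. The only cosmetic differences are that you restrict to rational times from the outset and spell out the order-theoretic reduction of infima to suprema, where the paper simply says the meet is constructed analogously.
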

\begin{proof}
	Fix an arbitrary set $S \subseteq \mathfrak{D}^{\uparrow}_{\mathbb{H}}$. 	We shall construct an element $\check{s}\in \mathfrak{D}^{\uparrow}_{\mathbb{H}}$ so that the join of $S$ can be defined by $\bigvee S := \check{s}$. The meet $\bigwedge S := \hat{s} $ can be defined for an element $\hat{s} $ constructed analogously. For each time $t\geq 0$, we have that $S(t):=\{ s(t) : s\in S \}$ is a collection of $\mathcal{F}_t$-measurable random variables
	which has an $\mathcal{F}_t$-measurable essential supremum $\bar{s}(t)$ (unique up to null sets),  see e.g.~\cite[Proposition 4.1.1]{edgar_sucheston}. Note that each $\bar{s}(t)$ is bounded by $M$, due to our assumption.
	
	For any $r^{\prime} \leq r^{\prime\prime}$ in $\mathbb{Q}_+$, it holds that $\bar{s}(r^{\prime}) \leq \bar{s}(r^{\prime \prime})$ almost surely by definition of the essential supremum (since $s(r^{\prime}) \leq s(r^{\prime \prime})$ for all $s\in S$, so $\bar{s}(r^{\prime \prime})$ is an upper bound of $S(r^\prime)$). Thus, we can find an event $\Omega_*$ (given by countable intersections) with $\mathbb{P}(\Omega_*)=1$ so that $\mathbb{Q}_+\ni r \mapsto \bar{s}(r)(\omega)$ is increasing for all $\omega \in \Omega_*$. Consequently, at every time $t\geq 0$, we may define $\check{s}(t)(\omega):=\lim_{r \downarrow t,\,r\in\mathbb{Q}_+} \bar{s}(r)(\omega)$ for $\omega \in \Omega_*$ and $\check{s}(t)(\omega):=0$ otherwise. By construction, the paths $t \mapsto \check{s}(t)(\omega)$ are in $\mathbf{D}^{\uparrow}_{\mathbb{H}}$ for all $\omega\in \Omega$. Moreover, since the filtration is right-continuous and complete, $\check{s}(t)$ is $\mathcal{F}_t$-measurable, so $\check{s}\in \mathfrak{D}^{\uparrow}_{\mathbb{H}}$ with paths bounded by $M$, as desired.
	
	It remains to confirm that $\check{s}$ defines the join of $S$. To this end, fix any $\tilde{s} \in \mathfrak{D}^{\uparrow}_{\mathbb{H}}$ such that $\tilde{s} \geq s$ for all $s\in S$. Then, for every $r\in\mathbb{Q}_+$, $\tilde{s}(r) \geq s(r)$ almost surely for any $s\in S$, so $\bar{s}(r) \leq \tilde{s}(r)$ almost surely by definition. Taking countable intersections, we get $\mathbb{P}(\bar{s}(r) \leq \tilde{s}(r)\;\text{for all } r\in\mathbb{Q}_+)=1$. Thus, the right-continuity of $\tilde{s}$ and the definition of $\bar{s}$ gives $\mathbb{P}(\check{s}(r) \leq \tilde{s}(r)\;\text{for all } r\in\mathbb{Q}_+)=1$, so $\check{s} \leq \tilde{s}$ in $\mathfrak{D}^{\uparrow}_{\mathbb{H}}$ as required.
\end{proof}

By applying (conditional) dominated convergence and using right-continuity, we can observe that the starting point of the freezing front $s$ must satisfy the constraint
\begin{equation}\label{eq:initial_jump_constraint}
	s(0)=s_0+ \frac{1}{\lambda \kappa }\nu_0([s_0,s(0)]).
\end{equation}
When $\nu_{0}(\{0\})=0$, this is trivially satisfied by $s(0)=s_0$. Nevertheless, depending on the shape of $\nu_0$, there could be multiple values satisfying the constraint, and then a given solution $s\in  \mathfrak{D}^{\uparrow}_{\mathbb{H}} $ could have $s(0)\geq s_0$ with $s(0) \neq s_0$. Regarding the minimal solution, if $s(0) \neq s_0$ (with non-zero probability), then it must be the case that there does \emph{not} exist a solution $s\in  \mathfrak{D}^{\uparrow}_{\mathbb{H}} $ with $s(0)=s_0$ as we would otherwise have a contradiction. Luckily, we can say the following, which lies at the heart of our approach in this paper.

\begin{prop}[Alignment with initial conditions]\label{prop:initial_aligned} Let the initial profile $\nu_0 $ be such that
	\begin{equation}\label{eq:initial_aligned}
		\frac{1}{\lambda \kappa}\nu_0([s_0,s_0 + y]) <  y  \quad \text{infinitely often,} \quad \text{as } y \downarrow 0,
	\end{equation}
	almost surely. Then the minimal solution $s\in  \mathfrak{D}^{\uparrow}_{\mathbb{H}} $ satisfies $s(0)=s_0$ almost surely.
\end{prop}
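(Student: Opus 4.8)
The plan is to reduce the claim, via minimality, to the construction of a single \emph{supersolution} of $\Gamma$ that is already aligned with the initial position, and then to build such a supersolution explicitly. \emph{Reduction.} Every solution $s\in\mathfrak{D}^{\uparrow}_{\mathbb{H}}$ of \eqref{eq:MV} has $s(t)\ge s_0$ for all $t$, since the integral term in \eqref{eq:MV} is non-negative; in particular the minimal solution $s^*$ from Theorem \ref{thm:min_soln} satisfies $s^*(0)\ge s_0$ a.s. On the other hand, $s^*$ obeys the constraint \eqref{eq:initial_jump_constraint}, namely $s^*(0)=s_0+\tfrac1{\lambda\kappa}\nu_0([s_0,s^*(0)])$ a.s., and letting $y\downarrow0$ in \eqref{eq:initial_aligned} (with $\nu_0$ finite) shows $\nu_0(\{s_0\})=0$ a.s. It thus suffices to prove $s^*(0)\le s_0$ a.s. Following the Tarski argument in the proof of Theorem \ref{thm:min_soln}, $s^*$ is the meet of all supersolutions, so $s^*\le\check s$ for any $\check s$ in the lattice of Lemma \ref{lem:lattic} with $\Gamma[\check s]\le\check s$; since $0\in\mathbb{Q}_+$, the order \eqref{eq:ordering} then gives $s^*(0)\le\check s(0)$ a.s. Hence it is enough to produce, on an event of full probability, a supersolution $\check s$ with $\check s(0)=s_0$.

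\emph{Shape of $\check s$ and the inequality to verify.} I would look for $\check s(t)=s_0+f(t)$ with $f\colon[0,\infty)\to\mathbb{H}$ nondecreasing, continuous, $\mathcal{F}_t$-adapted, $f(0)=0$, and eventually equal to a large constant (so $\check s$ is bounded and lies in the lattice). Continuity of $f$ at $0$ with $f(0)=0$ forces $\tau^{y,\check s}>0$ a.s.\ for every $y>s_0$ (the path $X^y$ from \eqref{eq:Gamma_map} starts strictly above a barrier that is continuous at time $0$ with value $s_0$), so by $\nu_0(\{s_0\})=0$ we get $\Gamma[\check s](0)=s_0=\check s(0)$ automatically. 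The remaining requirement $\Gamma[\check s](t)\le\check s(t)$ for $t>0$ says precisely that the conditional mass of $\nu_0$ absorbed by the barrier up to time $t$ is at most $\lambda\kappa f(t)$, i.e.
\[
\int\mathbb{P}\bigl(\tau^{y,\check s}\le t\mid\mathcal{F}_t\bigr)\,\nu_0(\mathrm{d}y)\le\lambda\kappa\,f(t),\qquad t>0.
\]

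\emph{Closing the inequality.} Conditionally on $\mathcal{F}_t$, the driver $\sqrt{2\kappa-\theta^2}B$ remains a Brownian motion while the effective barrier $r\mapsto f(r)-\theta W_r$ is known with oscillation of order $\sqrt t$ over $[0,t]$; a reflection estimate then shows a particle started at $s_0+u$ can be absorbed by time $t$ only if $\inf_{r\le t}\bigl(\sqrt{2\kappa-\theta^2}B_r\bigr)\le f(t)+O(\sqrt t)-u$, so the conditionally absorbed mass is bounded by $\nu_0\bigl([s_0,\,s_0+f(t)+O(\sqrt t)]\bigr)$ plus a term exponentially small in $t^{-1}$ (from $\nu_0$-mass well above $s_0$). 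This turns the displayed bound into a self-referential inequality for $f$, and \eqref{eq:initial_aligned} is exactly what makes it solvable: along the random subsequence $y_k\downarrow0$ with $\nu_0([s_0,s_0+y_k])<\lambda\kappa y_k$ there is strictly positive slack $\delta_k:=\lambda\kappa y_k-\nu_0([s_0,s_0+y_k])$, and if $f$ is chosen so that $f(t)+O(\sqrt t)\in[0,y_{k(t)}]$ for an index $k(t)\to\infty$ as $t\downarrow0$, increasing slowly enough that the $O(\sqrt t)$ and exponential corrections fit inside $\delta_{k(t)}$, the inequality holds; $k(t)\to\infty$ forces $f(t)\to0$, and past a fixed time one saturates $f$ at its cap. (Equivalently, one can run the monotone Picard iteration $\Gamma^n[s_0]\uparrow s^*$ — valid because $s_0$ is a subsolution dominated by every solution — and prove the same absorbed-mass estimate uniformly in $n$.)

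\emph{Main obstacle.} The last step is the crux. Since \eqref{eq:initial_aligned} is only an ``infinitely often'' statement, the slacks $\delta_k$ may decay arbitrarily fast and $\nu_0([s_0,s_0+y])$ may equal or exceed $\lambda\kappa y$ at most small scales, so $f$ cannot be comparable to $t$ or $\sqrt t$ — its decay at $0$ must be governed by the random sequence $(y_k)$. Arranging all at once that $f$ is nondecreasing, continuous, $f(0)=0$, $\mathcal{F}_t$-adapted, and that the reflection estimate holds \emph{uniformly down to $t=0$ conditionally on $\mathcal{F}_t$} — the conditioning being what injects the fluctuations of $W$ over $[0,t]$ into the bound — is the genuine technical work of the proposition.
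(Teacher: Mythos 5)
Your reduction is sound as far as it goes: by Tarski, the minimal solution is the infimum of all pre-fixed points of $\Gamma$, so it would indeed suffice to exhibit a supersolution $\check s$ with $\check s(0)=s_0$; and your observations that $\nu_0(\{s_0\})=0$ and that $\Gamma[\check s](0)=s_0$ for any barrier continuous at $0$ are correct. But the decisive step --- actually producing such a $\check s$ --- is not carried out, and the scheme you sketch cannot be closed in the form you propose. A supersolution must satisfy $\int\mathbb{P}(\tau^{y,\check s}\le t\mid\mathcal{F}_t)\,\nu_0(\mathrm{d}y)\le\lambda\kappa f(t)$ for \emph{every} $t>0$, whereas \eqref{eq:initial_aligned} gives subcritical behaviour of $\nu_0$ only along a random spatial subsequence $y_k\downarrow 0$. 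Between consecutive scales $y_{k+1}<y<y_k$ the measure may satisfy $\nu_0([s_0,s_0+y])\ge\lambda\kappa y$, and a reflection lower bound (particles started at $s_0+u$ with $u\lesssim f(t)$, or with $u\lesssim c\sqrt{t}$ on the dense set of times where $\theta W$ dips by $c\sqrt t$, are absorbed with conditional probability at least $1/2$) forces $\lambda\kappa f(t)\gtrsim\nu_0([s_0,s_0+f(t)\vee c\sqrt t\,])$ at such times. For a continuous increasing $f$ with $f(0)=0$, $f(t)$ must sweep through all intermediate scales, where this inequality can fail; so ``fitting $f$ inside the slacks $\delta_{k(t)}$'' is not achievable, and you have correctly identified but not resolved the crux.

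The paper avoids this obstruction entirely by working from below rather than from above. It takes the Picard iterates $s^{(n)}=\Gamma[s^{(n-1)}]$ starting from the bottom of the lattice, which converge to the minimal solution and are automatically \emph{sub}solutions, $\Gamma[s^{(n)}]\ge s^{(n)}$. It then argues by contradiction: using the law of the iterated logarithm to select times $\delta_k\downarrow 0$ at which $\theta W$ has \emph{not} dipped below $-\delta_k^{1/3}$, the subsolution inequality yields the lower bound $\frac{1}{\lambda\kappa}\nu_0([s_0,s_0+y+2\delta_k^{1/3}])\ge y-C\delta_k^{2/3}$ for all $y\le s^{(n)}(\delta_k)-s_0$; letting $n\to\infty$ and $k\to\infty$ shows that $s(0)>s_0$ would force $\nu_0([s_0,s_0+y])\ge\lambda\kappa y$ for all $y\le s(0)-s_0$, contradicting \eqref{eq:initial_aligned}. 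In other words, the hypothesis is used to rule out a jump, not to construct a barrier, which is why only a one-sided estimate at a well-chosen subsequence of times is needed. Your parenthetical alternative (iterating $\Gamma$ from a subsolution) is the right starting point, but the estimate you would need is the paper's lower bound on $\nu_0$ derived from $\Gamma[s^{(n)}]\ge s^{(n)}$, not a uniform upper bound on the absorbed mass.
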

\begin{proof}
	Define $s^{(0)}:=\Gamma[0]$ and $s^{(n)}:=\Gamma[s^{(n-1)}]$ for all $n\geq 1$. This gives a sequence of increasing continuous processes with $s^{(n)}\geq s^{(n-1)}$ for all $n\geq 1$. Taking  $S=\{s^{(n)}\}_{n\geq1}$ in the proof of Lemma \ref{lem:lattic}, we can simply let $\bar{s}(t):=\lim_{n\rightarrow\infty} s^{(n)}(t)$ for every $t \geq 0$. Then $\bar{s}$ has increasing paths by construction (after setting them to zero on a null set), so we get that, with probability 1, $\check{s} \in \mathfrak{D}^{\uparrow}_{\mathbb{H}}$ from the proof of Lemma \ref{lem:lattic} satisfies $\check{s}(t)=\lim_{n\rightarrow\infty} s^{(n)}(t)$ at every $t>0$ such that $\Delta \check{s}(t)=0$.
	
	By standard properties of Brownian motion and the fact that $\check{s}$ is increasing, it holds almost surely that, for every $y$, each path of $X^y$ satisfies $X^y_{r} < \check{s}(r)$ infinitely often as $r\downarrow \tau^{y,\check{s}}$. Therefore, by the convergence of $s^{(n)}$ to $\check{s}$, we get $ \tau^{y,s^{(n)}}\rightarrow \tau^{y,\check{s}}$ for all $y$ almost surely. Consider now the co-countable set of times $ t >0$ such that  $\mathbb{P}(\Delta \Gamma[\check{s}](t) = 0,\,  \Delta \check{s}(t)=0) =1 $.
	For any such $t$, we have $\nu_0( \{ y : \tau^{y,\check{s}} = t)\})=0$
	almost surely, and hence $ \tau^{y,s^{(n)}}\rightarrow \tau^{y,\check{s}}$ yields $\Gamma[s^{(n)}](t)\rightarrow \Gamma[\check{s}](t)$ almost surely. Moreover, $\Delta \check{s}(t)=0$ gives us $\Gamma[s^{(n)}](t)=s^{(n+1)}\rightarrow \check{s}(t)$ almost surely by the above, so $\check{s}(t)=\Gamma[\check{s}](t)$ almost surely for a dense set of times. As both processes are right-continuous they are thus indistinguishable, so $\check{s}$ is a fixed point of $\Gamma$.

	Using the law of the iterated logarithm, we can find $\Omega_0\in\mathcal{F}_\infty$ with $\mathbb{P}(\Omega_0)=1$ so that, for all $\omega \in \Omega_0$, $\inf_{r\in[0,\delta]} \theta W_r(\omega) \geq - \delta^{1/3}$ infinitely often as $\delta \downarrow 0$. By the above, we may assume $\Omega_0$ is such that $s^{(n)}(t)(\omega) \rightarrow s(t)(\omega)$ as $n\rightarrow \infty$ for all $\omega \in \Omega_0$ and every $t$ in the set $\mathbb{T}(\omega):=\{t>0 : \Delta s(t)(\omega)=0\}$. As the latter is co-countable, for each $\omega \in \Omega_0$, we can find a strictly decreasing sequence of $\mathcal{F}_\infty$-random times $\delta_k=\delta_k(\omega)\downarrow 0$ in $\mathbb{T}(\omega)$ with $\inf_{r\in[0,\delta_{k}]}\theta W_r \geq - \delta_{k}^{1/3}$ on $\Omega_0$.  Fix $n,k\geq1 $ and consider an arbitrary $y \in [0,s^{(n)}(\delta_k)-s_0]$. Since $ s^{(n)}$ is increasing and continuous with $s^{(n)}(0)=s_0$, we can find $\delta \in [0, \delta_k]$ so that $s^{(n)}(\delta) - s_0 =  y$.
	Now define $\tilde{B}_t:=\inf_{r\in[0,t]} \sqrt{2\kappa - \theta^2} B_r$. If $ \tilde{B}_{\delta}  \geq - \delta^{1/3}$ and $x +  \tilde{B}_{\delta}  \leq s_0 + y + \delta^{1/3}_{k} $, then $x \leq s_0 + y +2 \delta^{1/3}_{k} $, so
	\begin{align}\label{eq:bounding_nu0_below}
		\nu_0([s_0, s_0+y+2\delta^{1/3}_{k}]) \geq \int_{s_0}^\infty &\mathbb{P}\bigl( x +  \tilde{B}_{\delta}   \leq s_0 + y + \delta_{k}^{1/3}  \mid \mathcal{F}_\infty \bigr)\nu_0(\mathrm{d}x)\nonumber\\
		& - \mathbb{P}(\tilde{B}_{\delta}  \leq - \delta^{1/3} \mid  \mathcal{F}_\infty)\nu_0([s_0,\infty)).
	\end{align}
	Since $\delta \leq \delta_k$, we have $\inf_{r\in[0,\delta]}\theta  W_r  \geq - \delta_{k}^{1/3}$ on $\Omega_0$. Thus, using twice that  $s_0 + y = s^{(n)}(\delta)$, and using also that $s^{(n)}$ is increasing, we get
	\begin{align*}
		\frac{1}{\lambda \kappa}\int_{s_0}^\infty \mathbb{P}\bigl( x +  \tilde{B}_{\delta}   \leq    s_0 + y + \delta_{k}^{1/3}  \mid \mathcal{F}_\infty \bigr)\nu_0(\mathrm{d}x) & \geq \Gamma[s^{(n)}](\delta) - s_0\\
		& \geq s^{(n)}(\delta)- s_0 = y
	\end{align*}
	on $\Omega_0$. By independence, $\mathbb{P}(\tilde{B}_{\delta}  \leq - \delta^{1/3} \mid  \mathcal{F}_\infty) \leq C \delta^{2/3} \leq C\delta_{k}^{2/3}$ for a constant $C>0$, so \eqref{eq:bounding_nu0_below} now gives
	\begin{equation}\label{eq:PJC_bound_initial}
		\frac{1}{\lambda \kappa} \nu_0([s_0, s_0+y+2\delta^{1/3}_{k}]) \geq y - C\delta_k^{2/3}
	\end{equation}
	for all $y \in [0,s^{(n)}(\delta_k)-s_0]$ on $\Omega_0$.
	Since $\delta_k \in \mathbb{T}$, we can send $n\rightarrow \infty$ to see that \eqref{eq:PJC_bound_initial} holds for all $y \in [0,s(\delta_k)-s_0]$ on $\Omega_0$. Sending $k\rightarrow \infty$, we conclude that, almost surely,
	\[
	\frac{1}{\lambda \kappa}	\nu_0([s_0, s_0+y ]) \geq y \;\; \text{for all } y \leq  s(0)-s_0.
	\]
	Since $s(0)\geq s_0$, we must have $s(0)=s_0$, as we would otherwise contradict \eqref{eq:initial_aligned}.
\end{proof}

\begin{rem}We note that the last part of the above proof is similar in spirit to the weak convergence based arguments in \cite{hambly_aldair_etal}. Likewise, it shares some similarities with the earlier results of \cite{DIRT_SPA, LS} concerning the `physicality' of the limit points of related particle systems. See also Remark \ref{rem:physical_minimal} concerning the notion of `physical' solutions.
\end{rem}

To continue our analysis, we introduce the finite $\mathcal{F}_t$-adapted random measures
\begin{equation}\label{eq:measure_nu}
	\nu(t,\mathrm{d}x)=\int_{s_0}^\infty \mathbb{P}( X_t^y \in \mathrm{d}x ,\,t<\tau^y \mid \mathcal{F}_\infty) \nu_0(\mathrm{d}y),
\end{equation}
for $t\geq 0$, along with their left-limits
\[
\;\;  \nu(t-,\mathrm{d}x)=\int_{s_0}^\infty \mathbb{P}( X_t^y \in \mathrm{d}x ,\,t\leq \tau^y \mid \mathcal{F}_\infty) \nu_0(\mathrm{d}y),
\]
for $t> 0$. We can take a version of $(\nu(t,\mathrm{d}x))_{t\geq 0}$ in \eqref{eq:measure_nu} such that $t\mapsto \int\phi(x)\nu(t,\mathrm{d}x)$ is c\`adl\`ag for all $\phi\in C_b(\mathbb{R})$ and this is the version we work with throughout. We also note that, by Lemma \ref{prop:no_blow_up_timing} below, $\nu(t,\mathrm{d}x)$ has a density in $L^1 \cap L^\infty$ for all $t>0$, so we have that $t\mapsto \nu(t,[a,b])$ is c\`adl\`ag for any $a,b \in \mathbb{R}$.

Straightforward considerations reveal that each $\nu(t-,\mathrm{d}x)$ is supported on $[s(t-),\infty)$, for $t>0$, and that each $\nu(t,\mathrm{d}x)$ is simply the restriction of $\nu(t-,\mathrm{d}x)$ to $(s(t),\infty)$, for $t>0$. Likewise, $\nu(0,\mathrm{d}x)=\nu_0$ when $s(0)=s_0$, and otherwise $\nu(0,\mathrm{d}x)$ agrees with the restriction of $\nu_0$ to $(s(0),\infty)$. We shall make use of this below.

\begin{lem}[Restarted solutions]\label{lem:restart}
	Let $\tau$ be any $(\mathcal{F}_t)_{t\geq0}$-stopping time, and let $\hat{\mathfrak{D}}^\uparrow_\mathbb{H}$ denote the space $\mathfrak{D}^\uparrow_\mathbb{H}$ defined instead for the new filtration $\hat{\mathcal{F}}_r:=\mathcal{F}_{\tau+r}$, $r\geq0$. Write $\hat{s}_0:=s(\tau)$ and $\hat{\nu}_0(\mathrm{d}x):=\nu(\tau,\mathrm{d}x)$ on the event $\{\tau < \infty\}$. Then we can express any solution $s\in \mathfrak{D}^\uparrow_\mathbb{H}$ to \eqref{eq:MV} as
	\begin{equation}\label{eq:paste_soln}
		s(t)= s(t)\mathbf{1}_{\{t<\tau\}} + \hat{s}(t-\tau)\mathbf{1}_{\{t\geq\tau\}},\quad t\geq0,
	\end{equation}
	where $\hat{s}\in \hat{\mathfrak{D}}^\uparrow_\mathbb{H}$ solves \eqref{eq:MV}
	on $\{\tau < \infty\}$ with $\hat{s}(0)=\hat{s}_0$ for initial conditions $\hat{s}_0$ and $\hat{\nu}_0$.
\end{lem}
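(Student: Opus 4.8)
The plan is to take $\hat s(r):=s(\tau+r)$ for $r\ge0$ on $\{\tau<\infty\}$ (and $\hat s\equiv 0$ otherwise) and to verify directly that this process solves \eqref{eq:MV} relative to the filtration $\hat{\mathcal F}_r=\mathcal F_{\tau+r}$ and the restarted data $\hat s_0=s(\tau)$, $\hat\nu_0=\nu(\tau,\cdot)$. With this choice the decomposition \eqref{eq:paste_soln} holds by inspection, and the paths of $\hat s$ clearly lie in $\mathbf{D}^\uparrow_\mathbb{H}$. For adaptedness: since $s$ is $\mathcal F_t$-adapted and c\`adl\`ag, hence progressively measurable, $\hat s(r)=s(\tau+r)$ is $\mathcal F_{\tau+r}$-measurable; likewise, using that $t\mapsto\nu(t,\cdot)$ is $\mathcal F_t$-adapted with c\`adl\`ag weak paths (as recorded above \eqref{eq:measure_nu}), both $\hat s_0$ and $\hat\nu_0$ are $\mathcal F_\tau=\hat{\mathcal F}_0$-measurable, and $\hat\nu_0$ is supported on $[\hat s_0,\infty)$ by the description of $\nu(\tau,\cdot)$ as a restriction of $\nu(\tau-,\cdot)$. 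Finally, the shifted noises $\hat W_r:=W_{\tau+r}-W_\tau$ and $\hat B_r:=B_{\tau+r}-B_\tau$ are, by the strong Markov property of the joint Brownian motion $(W,B)$ at $\tau$ (a stopping time for $\mathcal G_t:=\mathcal F_t\vee\sigma(B_u:u\le t)$), independent Brownian motions, with $\hat W$ an $\hat{\mathcal F}$-Brownian motion and $\sigma(\hat B)$ independent of $\mathcal F_\infty\supseteq\hat{\mathcal F}_\infty$; so the restarted problem genuinely fits the standing set-up of Section \ref{sect:McKean}.

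The heart of the argument is a disintegration identity. Write $\hat X^x_r:=x+\sqrt{2\kappa-\theta^2}\hat B_r+\theta\hat W_r$ and $\hat\tau^x:=\inf\{r\ge0:\hat X^x_r\le\hat s(r)\}$. The first observation is that for each $y$ in the support of $\nu_0$ one has $\tau^y=\tau+\hat\tau^{X^y_\tau}$ on $\{\tau<\tau^y\}$: indeed $\tau<\tau^y$ forces $X^y_\tau>s(\tau)$ (otherwise $\tau$ itself would lie in $\{t:X^y_t\le s(t)\}$), and since $X^y_{\tau+\cdot}=\hat X^{X^y_\tau}_{\,\cdot}$ and $s(\tau+\cdot)=\hat s(\cdot)$, the first time after $\tau$ that $X^y$ meets $s$ is $\tau+\hat\tau^{X^y_\tau}$, with right-continuity of $s$ ruling out instantaneous absorption at $\tau$. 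Hence $\{\tau<\tau^y\le\tau+r\}=\{\tau<\tau^y,\ \hat\tau^{X^y_\tau}\le r\}$. I would then condition on $\mathcal F_\infty$: under this conditioning $s$, $\tau$, $\hat s$ and $\hat W$ are all measurable while $B$ is independent of $\mathcal F_\infty$; the pair $(\mathbf{1}_{\tau<\tau^y},X^y_\tau)$ is a functional of $(B_u:u\le\tau)$, the event $\{\hat\tau^x\le r\}$ (for frozen $x$) is a functional of $(\hat B_u:u\le r)$, and these two families of $B$-increments are conditionally independent given $\mathcal F_\infty$. A conditional Fubini argument then gives
\[
\mathbb{P}\bigl(\tau<\tau^y,\ \hat\tau^{X^y_\tau}\le r\mid\mathcal F_\infty\bigr)=\int_{\mathbb{R}} g(x)\,\mathbb{P}\bigl(X^y_\tau\in\mathrm{d}x,\ \tau<\tau^y\mid\mathcal F_\infty\bigr),\qquad g(x):=\mathbb{P}\bigl(\hat\tau^x\le r\mid\mathcal F_\infty\bigr),
\]
and integrating against $\nu_0(\mathrm{d}y)$, using $\hat\nu_0(\mathrm{d}x)=\nu(\tau,\mathrm{d}x)=\int_{s_0}^\infty\mathbb{P}(X^y_\tau\in\mathrm{d}x,\ \tau<\tau^y\mid\mathcal F_\infty)\nu_0(\mathrm{d}y)$ and that $\hat\nu_0$ is carried by $[\hat s_0,\infty)$, yields
\[
\int_{s_0}^\infty\mathbb{P}\bigl(\tau<\tau^y\le\tau+r\mid\mathcal F_\infty\bigr)\nu_0(\mathrm{d}y)=\int_{\hat s_0}^\infty\mathbb{P}\bigl(\hat\tau^x\le r\mid\mathcal F_\infty\bigr)\hat\nu_0(\mathrm{d}x).
\]

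To conclude, I would evaluate the fixed-point identity \eqref{eq:MV} for $s$ — valid for all times simultaneously by right-continuity, and so, after passing to a c\`adl\`ag version of the relevant conditional-probability process and applying optional sampling, at the stopping times $\tau$ and $\tau+r$ — and subtract. Using $\mathbb{P}(\tau^y\le\tau\mid\mathcal F_\tau)=\mathbb{P}(\tau^y\le\tau\mid\mathcal F_{\tau+r})$ and that conditioning the event $\{\tau<\tau^y\le\tau+r\}\in\mathcal G_{\tau+r}$ on $\mathcal F_{\tau+r}$ or on $\mathcal F_\infty$ gives the same answer (the conditional-independence principle used in the paragraph following \eqref{eq:Gamma_map}), this produces
\[
s(\tau+r)-s(\tau)=\frac{1}{\lambda\kappa}\int_{s_0}^\infty\mathbb{P}\bigl(\tau<\tau^y\le\tau+r\mid\mathcal F_\infty\bigr)\nu_0(\mathrm{d}y)=\frac{1}{\lambda\kappa}\int_{\hat s_0}^\infty\mathbb{P}\bigl(\hat\tau^x\le r\mid\mathcal F_\infty\bigr)\hat\nu_0(\mathrm{d}x).
\]
Since $\sigma(\hat B)$ is independent of $\hat{\mathcal F}_\infty$, the very argument used just after \eqref{eq:Gamma_map}, with $(\hat B,\hat W,\hat{\mathcal F})$ in place of $(B,W,\mathcal F)$, gives $\mathbb{P}(\hat\tau^x\le r\mid\mathcal F_\infty)=\mathbb{P}(\hat\tau^x\le r\mid\hat{\mathcal F}_r)$, so $\hat s(r)=\hat s_0+\frac{1}{\lambda\kappa}\int_{\hat s_0}^\infty\mathbb{P}(\hat\tau^x\le r\mid\hat{\mathcal F}_r)\hat\nu_0(\mathrm{d}x)$ for every $r\ge0$, almost surely on $\{\tau<\infty\}$. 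Both sides being right-continuous in $r$, this is precisely the statement that $\hat s$ is a fixed point of the map $\Gamma$ built from the restarted data, i.e.\ that $\hat s$ solves \eqref{eq:MV} on $\{\tau<\infty\}$ with $\hat s(0)=\hat s_0$; combined with $\hat s\in\hat{\mathfrak D}^\uparrow_\mathbb{H}$ and \eqref{eq:paste_soln}, this proves the lemma.

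I expect the main obstacle to be the disintegration step — carefully justifying that, conditionally on $\mathcal F_\infty$, the pre-$\tau$ quantities $(\mathbf{1}_{\tau<\tau^y},X^y_\tau)$ are independent of the post-$\tau$ hitting events $\{\hat\tau^x\le r\}$ for \emph{frozen} $x$. This rests on the strong Markov property of $(W,B)$ at $\tau$ and on the fact that $\hat s=s(\tau+\cdot)$ introduces no $B$-randomness beyond $\mathcal F_\infty$ (so no circularity arises between the restarted front and the restarted particles), and it is where one must freeze $x$ before integrating against the conditional law. The accompanying measurability bookkeeping — that $\{\tau<\tau^y\}\in\mathcal G_\tau$, that the candidate $\hat s$ and the data $(\hat s_0,\hat\nu_0)$ are suitably adapted, and the optional-sampling identification needed to evaluate the c\`adl\`ag processes at $\tau$ and $\tau+r$ — is routine but should be spelled out.
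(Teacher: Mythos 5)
Your proposal is correct and follows essentially the same route as the paper: both take $\hat s(r):=s(\tau+r)$, split $\{\tau^y\le\tau+r\}$ into the pre-$\tau$ and post-$\tau$ contributions, disintegrate the latter over the positions of the surviving particles at time $\tau$ (i.e.\ over $\nu(\tau,\mathrm{d}z)$) using the independence of the shifted Brownian increments given $\mathcal F_\infty$, and conclude by right-continuity that $\hat s$ solves the restarted problem. Your version merely makes explicit the adaptedness bookkeeping and the conditional-independence step that the paper leaves implicit.
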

\begin{proof}
	We can assume $\tau$ is finite, as the arguments are analogous when restricting to $\{\tau < \infty\}$. Define $\hat{W}^t_r := W_{r+t} - W_t $ and $\hat{B}^t_r:=B_{r+t} - B_t $ for $t,r\geq0$.  Fix $h>0$ and set $\sigma:= \sqrt{2\kappa - \theta^2}$. Noting that $\tau \perp (B,W,s) \,|\, \mathcal{F}^\infty$, we get
	\begin{align*}
		s(\tau+h) &= \mathbb{P}(\tau^y \leq t + h \mid \mathcal{F}_{\infty})|_{t=\tau}  = \mathbb{P}(\tau^y\leq t  \mid \mathcal{F}_{\infty})|_{t=\tau} +   \mathbb{P}(t< \tau^y\leq t+h  \mid \mathcal{F}_{\infty})   \\
		&=s(\tau) + \!\int_{s_0}^\infty \!\!\mathbb{P}\bigl( \exists r\leq h :  X^y_t +\sigma \hat{B}^t_r + \theta \hat{W}^t_r  \leq s(t +r) ,\, t <\tau^y \mid \mathcal{F}_{\infty}\bigr)\bigr|_{t=\tau} \nu_0(\mathrm{d}y)\\
		& = s(\tau) + \!\int_{s(\tau)}^\infty \mathbb{P}\bigl( \exists r\leq h : z + \sigma \hat{B}^t_r + \theta \hat{W}^t_r \leq  s(t +r) \mid \mathcal{F}_{\infty}\bigr) \bigr|_{t=\tau}\,\nu(\tau,\mathrm{d}z)\\
		& = s(\tau) + \!\int_{s(\tau)}^\infty \mathbb{P}\bigl( \exists r\leq h : z + \sigma \hat{B}^\tau_r + \theta \hat{W}^\tau_r \leq  s(\tau +r) \mid \mathcal{F}_{\infty}\bigr)\nu(\tau,\mathrm{d}z),
	\end{align*}
	almost surely. Now define $\hat{\tau}^z=\tau^{z,\hat{s}}$ by \eqref{eq:Gamma_map} for the Brownian motions $\hat{B}:=\hat{B}^\tau$ and $\hat{W}:=\hat{W}^\tau$, and the barrier $\hat{s}(r):=s(\tau+r)$ for $r\geq0$. Then the above gives that
	\[
	s(\tau+h) = s(\tau) + \int_{s(\tau)}^\infty \mathbb{P}( \hat{\tau}^z \leq h \mid \mathcal{F}_{\infty})\nu(\tau,\mathrm{d}z)
	\]
	almost surely, for every $h>0$. By right-continuity, it follows that $\hat{s}$ is a solution to \eqref{eq:MV} for the given filtration and the given initial conditions.
\end{proof}

Armed with this lemma, we can prove the following property of the jump discontinuities of the freezing front.

\begin{prop}[Lower bound on jump discontinuities]\label{prop:min_jump} Let $s\in  \mathfrak{D}^{\uparrow}_{\mathbb{H}} $ be a solution to \eqref{eq:MV}. Then
	\begin{equation}\label{eq:minimal_initial_jump}
		s(0) -  s_0 \geq	\inf \Bigl\{ y >0 : \frac{1}{\lambda \kappa } \nu_{0}([s_0,s_0 + y]) < y \Bigr\} 
	\end{equation}
	almost surely, and, for any $\mathcal{F}_t$-stopping time $\tau$, we have
	\begin{equation}\label{eq:minimal_jump}
		s(\tau) -  s(\tau-)  \geq	\inf \Bigl\{ y >0 : \frac{1}{\lambda \kappa } \nu(\tau-,[s(\tau-),s(\tau-) + y]) < y \Bigr\} 
	\end{equation}
	on $\{\tau \in (0,\infty)\}$ almost surely.
\end{prop}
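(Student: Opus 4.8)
Here is how I would approach Proposition~\ref{prop:min_jump}. The plan is to establish \eqref{eq:minimal_initial_jump} first and then deduce \eqref{eq:minimal_jump} from it by means of the restarting device in Lemma~\ref{lem:restart}; the reduction is clean, while the initial-jump bound is where the real work lies.

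\emph{From \eqref{eq:minimal_initial_jump} to \eqref{eq:minimal_jump}.} Fix an $\mathcal{F}_t$-stopping time $\tau$ and restrict to $\{\tau\in(0,\infty)\}$. By Lemma~\ref{lem:restart} the $\tau$-shift $\hat s$ of $s$ solves \eqref{eq:MV} for the filtration $\hat{\mathcal F}_r=\mathcal F_{\tau+r}$ with initial data $\hat s_0=s(\tau)$ and $\hat\nu_0=\nu(\tau,\cdot)$, and satisfies $\hat s(0)=\hat s_0$. Since $\tau>0$, $\nu(\tau,\cdot)$ is absolutely continuous by Lemma~\ref{prop:no_blow_up_timing}, so $\nu(\tau,\{s(\tau)\})=0$, and applying \eqref{eq:minimal_initial_jump} to $\hat s$ together with $\hat s(0)-\hat s_0=0$ forces $\inf\{y>0:\tfrac1{\lambda\kappa}\nu(\tau,[s(\tau),s(\tau)+y])<y\}=0$; i.e.\ there are $y_n\downarrow 0$ with $\tfrac1{\lambda\kappa}\nu(\tau,[s(\tau),s(\tau)+y_n])<y_n$. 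Using that $\nu(\tau,\cdot)$ is the restriction of $\nu(\tau-,\cdot)$ to $(s(\tau),\infty)$ (recorded above) and that consequently $\nu(\tau-,[s(\tau-),s(\tau)])=\lambda\kappa\,\Delta s(\tau)$ from the front formula in \eqref{eq:MV}, one rewrites
\[
\tfrac1{\lambda\kappa}\nu(\tau,[s(\tau),s(\tau)+y])=\tfrac1{\lambda\kappa}\nu(\tau-,[s(\tau-),s(\tau-)+\Delta s(\tau)+y])-\Delta s(\tau),
\]
so the displayed inequalities become $\tfrac1{\lambda\kappa}\nu(\tau-,[s(\tau-),s(\tau-)+z_n])<z_n$ with $z_n:=\Delta s(\tau)+y_n\downarrow\Delta s(\tau)$. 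Hence $\Delta s(\tau)\ge\inf\{z>0:\tfrac1{\lambda\kappa}\nu(\tau-,[s(\tau-),s(\tau-)+z])<z\}$, which is \eqref{eq:minimal_jump}.

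\emph{Proving \eqref{eq:minimal_initial_jump}.} Since the minimal solution $\check s$ of Theorem~\ref{thm:min_soln} satisfies $\check s\le s$, hence $\check s(0)\le s(0)$ almost surely, for every solution $s$, and since the right-hand side of \eqref{eq:minimal_initial_jump} depends only on $(s_0,\nu_0)$, it is enough to prove the bound for $\check s$; and $\check s=\lim_n s^{(n)}$ is the monotone limit of the continuous Picard iterates $s^{(n)}$ of $\Gamma$ from the proof of Proposition~\ref{prop:initial_aligned}, each with $s^{(n)}(0)=s_0$. Writing $\alpha^{*}:=\inf\{y>0:\tfrac1{\lambda\kappa}\nu_0([s_0,s_0+y])<y\}$, suppose for contradiction that $\mathbb P(\check s(0)-s_0<\alpha^{*})>0$ and pick a deterministic $\varepsilon>0$ with $\mathbb P(A)>0$ for $A:=\{\check s(0)-s_0\le\alpha^{*}-\varepsilon\}$. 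On $A$ one has $\tfrac1{\lambda\kappa}\nu_0([s_0,s_0+y])\ge y$ for all $y<\alpha^{*}$, which with $\tfrac1{\lambda\kappa}\nu_0([s_0,\check s(0)])=\check s(0)-s_0$ from \eqref{eq:initial_jump_constraint} shows that $\nu(0,\cdot)$ is \emph{super-critical near its left edge}: $\tfrac1{\lambda\kappa}\nu(0,[\check s(0),\check s(0)+z])\ge z$ for all $0<z<\gamma:=\alpha^{*}-(\check s(0)-s_0)$, with $\gamma\ge\varepsilon$ on $A$. The crux is to show this is incompatible with right-continuity of $\check s$ at $0$: the positive feedback in \eqref{eq:MV} propagates the discrete cascade \eqref{eq:inductive_freezing}, so that each infinitesimal advance of the front releases a further comparable layer of mass against it, which prevents $\check s(t)$ from returning to $\check s(0)$ as $t\downarrow 0$. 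Concretely, I would localise to good times $\delta_k\downarrow 0$ along which the common noise is controlled by the law of the iterated logarithm (as in the proof of Proposition~\ref{prop:initial_aligned}, where $\inf_{r\le\delta_k}\theta W_r\ge-\delta_k^{1/3}$), run the cascade \eqref{eq:inductive_freezing} from $\check s(0)$ against $\nu(0,\cdot)$, and compare its steps with the iterates $s^{(n)}$, using the square-root law for Brownian motion to see that each cascade step eats — up to $o(1)$ errors — a definite fraction of the super-critical layer it faces; letting $n\to\infty$ and then $k\to\infty$ yields $s^{(n)}(\delta_k)-s_0\ge\gamma-o(1)$, hence $\check s(0)=\check s(0+)\ge s_0+\gamma$, contradicting $A$.

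\emph{Main obstacle.} The restarting reduction and the reduction to $\check s$ are routine; the genuine difficulty is the last step, namely making rigorous that a super-critical layer of mass cannot persist against the freezing front — equivalently, that the cascade \eqref{eq:inductive_freezing} is forced to run to completion inside any interval $(0,\delta)$. This requires exploiting the positive feedback quantitatively and uniformly, in the spirit of \cite[Proposition~2.4]{hambly_ledger_sojmark_2018}, and the presence of $\theta W$ in \eqref{eq:MV} is precisely what compels one to work along the good times $\delta_k$ and to carry the attendant $\delta_k^{1/3}$-type error terms.
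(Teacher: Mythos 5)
Your two reductions are sound and essentially reproduce the paper's structure: the paper also obtains \eqref{eq:minimal_jump} by transporting the supercritical--layer condition through Lemma~\ref{lem:restart} and the computation \eqref{eq:nu_t_and_nu_t_minus}--\eqref{eq:jump-tau_k}, and it notes that \eqref{eq:minimal_initial_jump} is the same argument without the restart (your extra detour through the minimal solution is valid but not needed, since the contradiction argument works for an arbitrary solution directly). The genuine gap is precisely the step you flag as the main obstacle, and it is not merely a matter of missing details: the mechanism you propose points in the wrong direction. The events $\{\inf_{r\le\delta_k}\theta W_r\ge-\delta_k^{1/3}\}$ from the proof of Proposition~\ref{prop:initial_aligned} bound the common noise from \emph{below}; there they serve to show that the front \emph{cannot} advance unless the measure is supercritical on the whole advanced layer, i.e., they produce an upper bound on the jump. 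A lower bound on $\theta W$ only gives an upper bound on absorption probabilities, so it cannot force the front to move. If you try to close the self-consistent inequality for the absorbed mass using only the idiosyncratic noise $B$ against a layer that is merely critical in the integrated sense, you get an estimate of the form $\alpha L_h\ge \alpha L_h-C\sqrt{h}$, which is vacuous; your assertion that "each cascade step eats a definite fraction of the super-critical layer" is exactly the quantitative content that is missing, and under your noise restriction it is false in general.

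What the paper actually does (following \cite[Proposition~3.5]{LS}) is the opposite localisation: restrict to the almost sure event on which $\theta W_h\le-\sqrt{h}$ infinitely often as $h\downarrow0$. On such paths the common noise transports the entire supercritical layer toward the barrier at the square-root rate, which seeds the cascade: writing $z=\alpha L_h+\sqrt{h}$ and integrating the hitting probabilities $\Phi\bigl((z-y)/(\sqrt{1-\rho^2}\,\sqrt{h})\bigr)$ against the supercritical measure yields $\alpha L_h\ge z-\tfrac{1}{\sqrt{2\pi}}\sqrt{1-\rho^2}\,\sqrt{h}$ unless $z$ already exceeds the width of the layer, and since $\sqrt{1-\rho^2}/\sqrt{2\pi}<1$ this forces $\alpha L_h$ to clear the whole layer for arbitrarily small $h$, contradicting the right-continuity $s(h)\downarrow s(0)$. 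Your sketch gestures at "the square-root law" and at the cascade \eqref{eq:inductive_freezing}, but it pairs the wrong noise event with an unproven quantitative claim, so the heart of the proposition is not established. (There is also a small bookkeeping slip at the end: the cascade run from $\check{s}(0)$ through a layer of width $\gamma$ should give $\check{s}(0+)\ge\check{s}(0)+\gamma=s_0+\alpha^{*}$, not $\check{s}(0+)\ge s_0+\gamma$, which by itself would not contradict the event $A$.)
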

\begin{proof}
	Fix $\tau$ and suppose, for a contradiction, that there is a non-negligible event $\Omega_0$ on which
	\[
	\Delta s(\tau) <	\inf \Bigl\{ y >0 :  \frac{1}{\lambda \kappa }\nu(\tau-,[s(\tau-),s(\tau-) + y]) < y \Bigr\} .
	\]
	Writing $s(\tau)=s(\tau-)+\Delta s(\tau)$ and using that $\nu(\tau,\mathrm{d}x)$ equals the restriction of $\nu(\tau-,\mathrm{d}x)$ to $(s(\tau),\infty)$, the same calculation as for \eqref{eq:jump-tau_k} gives
	\begin{equation}\label{eq:jump_restart}
		\inf \Bigl\{ y >0 :  \frac{1}{\lambda \kappa }\hat{\nu}_0([\hat{s}_0,\hat{s}_0 + y]) < y \Bigr\} > 0
	\end{equation}
	on $\Omega_0$ with $ \hat{\nu}_0(\mathrm{d}x):= \nu(\tau,\mathrm{d}x)$ and $ \hat{s}_0:= s(\tau)$. Now apply Lemma \ref{lem:restart} to express $s$ in terms of the restarted solution $\hat{s}\in \hat{\mathfrak{D}}^\uparrow_\mathbb{H}$ with these initial conditions. With probability one, we can ensure that $\hat{W}_{h}\leq -\sqrt{h}$ infinitely often as $h\downarrow 0$. Intersecting $\Omega_0$ with this event, we thus have a non-negligible event on which we can proceed analogously to the arguments in the proof of \cite[Proposition 3.5]{LS}. In this way, it follows that \eqref{eq:jump_restart} leads to a contradiction of the right-continuity $\hat{s}(h)\downarrow \hat{s}_0$ as $h\downarrow 0$, with non-zero probability, so \eqref{eq:minimal_jump} holds. The proof of \eqref{eq:minimal_initial_jump} is identical without the step involving restarting of the solution.
\end{proof}

Analogously to \eqref{eq:initial_jump_constraint}, an application of the conditional dominated convergence theorem in \eqref{eq:MV} gives that, for all $\mathcal{F}_t$-stopping times $\tau$, we must have
\begin{equation}\label{eq:stop_time_jump}
	s(\tau )= s(\tau-) + \frac{1}{\lambda \kappa }\nu(\tau -, [s(\tau-),s(\tau)])
\end{equation}
almost surely. By continuity of the measures, we can observe that the right hand-sides of \eqref{eq:minimal_initial_jump} and \eqref{eq:minimal_jump} satisfy the constraints \eqref{eq:minimal_initial_jump} and \eqref{eq:stop_time_jump} respectively. One would therefore expect that the jumps of the minimal solution can be described in this way, unless of course those values can somehow fail to be attainable. Luckily, things turn out to align with intuition: the minimal solution does indeed attain the lower bound on the jump sizes.

\begin{thm}[Minimal jumps]\label{prop:min_soln_jumps} Let $s\in  \mathfrak{D}^{\uparrow}_{\mathbb{H}}$ be the minimal solution. Then \eqref{eq:minimal_initial_jump} and \eqref{eq:minimal_jump} hold with equality.
\end{thm}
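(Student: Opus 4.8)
The plan is to establish the reverse (``$\le$'') inequalities, since Proposition~\ref{prop:min_jump}, applied to the minimal solution $s$ itself, already gives the ``$\ge$'' directions in \eqref{eq:minimal_initial_jump} and \eqref{eq:minimal_jump}. I would first prove equality in \eqref{eq:minimal_initial_jump} by an explicit ``truncated-jump'' construction, and then bootstrap to the stopping-time statement \eqref{eq:minimal_jump} via the restarting device of Lemma~\ref{lem:restart}.

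\emph{Step 1: the initial jump.} Put $y^\ast:=\inf\{y>0:\tfrac{1}{\lambda\kappa}\nu_0([s_0,s_0+y])<y\}$; this is $\mathcal{F}_0$-measurable and almost surely finite since $\nu_0$ is a finite measure. From the definition of $y^\ast$ as an infimum, together with the monotonicity and right-continuity in $y$ of $\nu_0([s_0,s_0+y])$, one checks that $\tfrac{1}{\lambda\kappa}\nu_0([s_0,s_0+y^\ast])=y^\ast$, and, subtracting this identity from $\tfrac{1}{\lambda\kappa}\nu_0([s_0,s_0+y])<y$ along a sequence $y\downarrow y^\ast$ taken from the defining set, that the data $\hat s_0:=s_0+y^\ast$ and $\hat\nu_0:=\nu_0|_{(s_0+y^\ast,\infty)}$ satisfy the alignment condition \eqref{eq:initial_aligned}. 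Hence by Theorem~\ref{thm:min_soln} and Proposition~\ref{prop:initial_aligned} the minimal solution $\hat s\in\mathfrak{D}^\uparrow_\mathbb{H}$ of \eqref{eq:MV} for the data $(\hat s_0,\hat\nu_0)$ satisfies $\hat s(0)=\hat s_0$. I then claim that $\hat s$ is itself a solution of \eqref{eq:MV} for the original data $(s_0,\nu_0)$, now with jump $\hat s(0)-s_0=y^\ast$ at time $0$: every $y\in[s_0,s_0+y^\ast]$ has $X^y_0=y\le\hat s(0)$, whence $\tau^y=0$ and $\mathbb{P}(\tau^y\le t\mid\mathcal{F}_t)=1$ for all $t\ge0$; for $y>s_0+y^\ast$ the hitting times against the common barrier $\hat s$ agree with those in the $(\hat s_0,\hat\nu_0)$ problem; and using $\tfrac{1}{\lambda\kappa}\nu_0([s_0,s_0+y^\ast])=y^\ast=\hat s_0-s_0$ one verifies that $s_0+\tfrac{1}{\lambda\kappa}\int_{s_0}^\infty\mathbb{P}(\tau^y\le t\mid\mathcal{F}_t)\,\nu_0(\mathrm{d}y)=\hat s(t)$ for every $t\ge0$. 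Since $s$ is minimal, $s\le\hat s$, so $s(0)-s_0\le\hat s(0)-s_0=y^\ast$; with Proposition~\ref{prop:min_jump} this yields \eqref{eq:minimal_initial_jump} with equality.

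\emph{Step 2: the jump at a stopping time.} Fix an $\mathcal{F}_t$-stopping time $\tau$ and work on $\{\tau\in(0,\infty)\}$. By Lemma~\ref{lem:restart}, the process $\hat s(r):=s(\tau+r)$ solves \eqref{eq:MV} for the restarted filtration $\hat{\mathcal{F}}_r=\mathcal{F}_{\tau+r}$ with data $(s(\tau),\nu(\tau,\cdot))$; since $\nu(\tau,\cdot)$ is the restriction of $\nu(\tau-,\cdot)$ to $(s(\tau),\infty)$ and $s(\tau)=s(\tau-)+\tfrac{1}{\lambda\kappa}\nu(\tau-,[s(\tau-),s(\tau)])$ by \eqref{eq:stop_time_jump}, the same $\hat s$ also solves \eqref{eq:MV} for the data $(s(\tau-),\nu(\tau-,\cdot))$, the mass on $[s(\tau-),s(\tau)]$ being absorbed at time $0$ and producing the initial jump $\hat s(0)-s(\tau-)=\Delta s(\tau)$. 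I then claim that $\hat s$ is the \emph{minimal} solution of this restarted problem: given any other solution $\tilde s\in\hat{\mathfrak{D}}^\uparrow_\mathbb{H}$ of \eqref{eq:MV} with data $(s(\tau-),\nu(\tau-,\cdot))$, the glued process $s'(t):=s(t)\mathbf{1}_{t<\tau}+\tilde s(t-\tau)\mathbf{1}_{t\ge\tau}$ is again a global solution of \eqref{eq:MV} for $(s_0,\nu_0)$ (the converse of Lemma~\ref{lem:restart}, obtained by running its computation in reverse, the absorption of the mass on $[s(\tau-),\tilde s(0)]$ at the restart being pinned down by $\tfrac{1}{\lambda\kappa}\nu(\tau-,[s(\tau-),\tilde s(0)])=\tilde s(0)-s(\tau-)$), so minimality of $s$ forces $s\le s'$ and hence $s(\tau+\cdot)\le\tilde s$. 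Applying the already-proved Step~1 to the restarted problem, we conclude that $\Delta s(\tau)=\hat s(0)-s(\tau-)=\inf\{y>0:\tfrac{1}{\lambda\kappa}\nu(\tau-,[s(\tau-),s(\tau-)+y])<y\}$ on $\{\tau\in(0,\infty)\}$, which together with Proposition~\ref{prop:min_jump} is \eqref{eq:minimal_jump} with equality.

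\emph{Main obstacle.} The one step that is not routine bookkeeping is the converse of Lemma~\ref{lem:restart} used in Step~2: the lemma is stated only as a decomposition of a given global solution, whereas I need that gluing $s$ on $[0,\tau)$ to an \emph{arbitrary} restarted solution $\tilde s$ (which in general carries a different, smaller, jump at $\tau$ than $s$ does) again produces a solution of \eqref{eq:MV} for $(s_0,\nu_0)$. This requires re-running the conditional hitting-probability computation in the proof of Lemma~\ref{lem:restart}, with due care for the instantaneous absorption at the restart time and for the upward jump of the barrier exactly at $\tau$; the two sides match precisely because of the balance $\tfrac{1}{\lambda\kappa}\nu(\tau-,[s(\tau-),\tilde s(0)])=\tilde s(0)-s(\tau-)$, which every restarted solution satisfies by the analogue of \eqref{eq:initial_jump_constraint}. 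Everything else rests on the monotone lattice structure and the measurability set up in Section~\ref{sect:McKean}.
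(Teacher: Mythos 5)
Your proposal is correct and follows essentially the same route as the paper: truncate the jump at the value of the infimum, observe that the truncated data satisfies \eqref{eq:initial_aligned} so that Theorem \ref{thm:min_soln} and Proposition \ref{prop:initial_aligned} yield a restarted solution starting exactly at that level, paste it back onto $s$ via the reverse of Lemma \ref{lem:restart}, and invoke minimality. The paper packages this as a single contradiction argument directly at the stopping time $\tau$, whereas you first treat $t=0$ and then reduce to it by establishing minimality of the restarted solution, but the underlying construction and the one genuinely technical step (the reverse gluing, which you correctly flag and which the paper likewise only sketches) are identical.
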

\begin{proof}
	Suppose, for a contradiction, that there is an $\mathcal{F}_t$-stopping time $\tau$ and a non-negligible event $\Omega_0$ such that the inequality in \eqref{eq:minimal_jump} is strict on this event. Set
	\[
	\hat{s}_0:=s(\tau -)+ \inf \Bigl\{ y >0 :  \frac{1}{\lambda \kappa } \nu(\tau-,[s(\tau-),s(\tau-) + y]) < y \Bigr\} 
	\]
	and define $\hat{\nu}_0(\mathrm{d}x):=\nu(\tau-,\mathrm{d}x \cap (	\hat{s}_0,\infty))$. Then 
	\[
	\inf \Bigl\{ y >0 :  \frac{1}{\lambda \kappa } \hat{\nu}_0([	\hat{s}_0,	\hat{s}_0 + y]) < y \Bigr\} =0,
	\]
	so Theorem \ref{thm:min_soln} and Proposition 
	\ref{prop:initial_aligned} yields the existence of a minimal solution $\hat{s} \in \hat{\mathfrak{D}}^{\uparrow}_{\mathbb{H}}$ with $\hat{s}(\tau)=\hat{s}_0$, where $\hat{\mathfrak{D}}^{\uparrow}_{\mathbb{H}}$ is as in the proof of Lemma \ref{lem:restart}. Pasting this together with the original solution as in \eqref{eq:paste_soln}, we can argue as in the proof of Lemma \ref{lem:restart}, only in reverse, to obtain a solution $\tilde{s}\in \mathfrak{D}^{\uparrow}_{\mathbb{H}}$. By construction, $\tilde{s}(\tau)=\hat{s}_0 < s(\tau)$ on the non-negligible event $\Omega_0$, so $s \nleq \tilde{s}$, which contradicts the minimality of $s$.
\end{proof}

\begin{rem}[Physicality of the minimal solution]\label{rem:physical_minimal} Consider the setting $\theta =0$ with initial conditions $s_0=0$ and $\nu_0$, where $\nu_0$ is some probability measure. Since $\theta =0$, the freezing front $s$ is now deterministic and so are the measures $\nu(t,\mathrm{d}x)$. Taking also $\kappa:=1/2$ and $\alpha := 1/2\lambda $, the problem \eqref{eq:MV} then becomes equivalent to the one studied in \cite{cuchiero, DNS, hambly_ledger_sojmark_2018}. Following \cite{DIRT_SPA} (see also \cite{cuchiero, DNS, hambly_ledger_sojmark_2018}), we can declare that an increasing right-continuous solution $t\mapsto s(t)$ is \emph{physical} if
	\[
	\Delta s(t) =  \inf \{ y>0 :  \alpha \,\nu(t-,[s(t-),s(t-)+y]) < y  \},\;\; \text{for all}\;\; t\geq0,
	\]
	with $s(0-):=s_0$ and $\nu(0-,\mathrm{d}x):=\nu_0$. In \cite[Proposition 2.3]{cuchiero} it was shown, by different but related arguments, that there is a minimal solution to \eqref{eq:MV} in the above setting, and \cite[Theorem 6.5]{cuchiero} then confirmed that this minimal solution is \emph{physical} in the above sense. The former relies on the introduction of an appropriate topology which Theorem \ref{thm:min_soln} avoids (for a topological statement, see Remark \ref{rem:M1}). The latter involves suitable finite approximating particle systems with perturbed initial conditions, while Proposition \ref{prop:min_soln_jumps} with $\tau=t$ provides a more direct approach based on restarting the system in agreement with given initial conditions. We note also that \cite[Theorem 6.5]{cuchiero} assumes $\int_0^\infty x \,\nu_0(\mathrm{d}x)<\infty$ which can be dispensed with.
\end{rem}

\begin{prop}[Pathwise jump size constraints] For any solution $s\in \mathfrak{D}^{\uparrow}_{\mathbb{H}}$ we have
	\begin{equation}
		s(t) -  s(t-) \geq  \inf \Bigl\{ y >0 :  \frac{1}{\lambda \kappa } \nu(t-,[s(t-),s(t-) + y])< y \Bigr\},\quad \text{for all}\quad t>0, \label{eq:pathwise_min_jump}
	\end{equation}
	with probability 1. Moreover, if $s\in \mathfrak{D}^{\uparrow}_{\mathbb{H}}$ is the minimal solution, then there is equality in \eqref{eq:pathwise_min_jump} for all $t>0$ with probability 1.
\end{prop}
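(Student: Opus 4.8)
The plan is to rephrase \eqref{eq:pathwise_min_jump} as the pathwise statement $s(t)\ge \lim_{\varepsilon\downarrow0}s(t;\varepsilon)$ for all $t>0$, where, in the notation of this section, $\lim_{\varepsilon\downarrow0}s(t;\varepsilon)=s(t-)+\inf\{y>0:\tfrac{1}{\lambda\kappa}\nu(t-,[s(t-),s(t-)+y])<y\}$, this being the analogue of \eqref{eq:min_jump}. Recall from the proof of Theorem \ref{thm:traject_jump} that, thanks to Lemma \ref{prop:no_blow_up_timing}, the map $t\mapsto \lim_{\varepsilon\downarrow0}s(t;\varepsilon)$ is progressively measurable, so $\tau:=\inf\{t>0:\lim_{\varepsilon\downarrow0}s(t;\varepsilon)>s(t)\}$ and, for $k\ge1$, $\tau_k:=\inf\{t>0:\lim_{\varepsilon\downarrow0}s(t;\varepsilon)\ge s(t)+1/k\}$ are stopping times. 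The inequality in \eqref{eq:pathwise_min_jump} will follow once $\mathbb P(\tau<\infty)=0$ is established, and the equality for the minimal solution will then follow by combining this with Theorem \ref{prop:min_soln_jumps} along a countable exhaustion of the jump times of $s$.

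For the inequality I would write $\{\tau<\infty\}=\bigcup_k\{\tau_k<\infty\}$ and argue by contradiction, assuming $\mathbb P(\tau_k<\infty)>0$ for some $k$. The crux is to show that, almost surely on $\{\tau_k<\infty\}$,
\[
\tfrac{1}{\lambda\kappa}\,\nu\bigl(\tau_k,[s(\tau_k),s(\tau_k)+y]\bigr)\ \ge\ y\qquad\text{for all }y\in(0,1/k].
\]
For any $t>0$ with $\lim_{\varepsilon\downarrow0}s(t;\varepsilon)\ge s(t)+1/k$ one has $\inf\{y>0:\tfrac{1}{\lambda\kappa}\nu(t-,[s(t-),s(t-)+y])<y\}\ge \Delta s(t)+1/k$; inserting this, together with the identity $\nu(t-,[s(t-),s(t)])=\lambda\kappa\,\Delta s(t)$ and the fact that $\nu(t,\mathrm dx)$ is the restriction of $\nu(t-,\mathrm dx)$ to $(s(t),\infty)$ — both valid for all $t>0$ almost surely, cf.\ \eqref{eq:measure_nu} and \eqref{eq:stop_time_jump} — rearranges to $\tfrac{1}{\lambda\kappa}\nu(t,[s(t),s(t)+z])\ge z$ for all $z\in(0,1/k)$. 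Since $\tau_k$ is the infimum of the set of such strictly positive times, I would choose $t_n\downarrow\tau_k$ with $\lim_{\varepsilon\downarrow0}s(t_n;\varepsilon)\ge s(t_n)+1/k$, hence with $\tfrac{1}{\lambda\kappa}\nu(t_n,[s(t_n),s(t_n)+z])\ge z$ for $z\in(0,1/k)$, pass to the limit using right-continuity of $t\mapsto s(t)$ and of $t\mapsto\nu(t,[s(t),s(t)+z])$ (as in the proof of Theorem \ref{thm:traject_jump}), and then let $z\uparrow1/k$ using continuity of $z\mapsto\nu(\tau_k,[s(\tau_k),s(\tau_k)+z])$ (Lemma \ref{prop:no_blow_up_timing}); this gives the displayed bound. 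Then I would restart the solution at $\tau_k$ via Lemma \ref{lem:restart}, with $\hat s_0:=s(\tau_k)$ and $\hat\nu_0:=\nu(\tau_k,\cdot)$: the displayed bound says $\inf\{y>0:\tfrac{1}{\lambda\kappa}\hat\nu_0([\hat s_0,\hat s_0+y])<y\}\ge 1/k$, whereas $\hat s(0)=\hat s_0$, contradicting \eqref{eq:minimal_initial_jump} of Proposition \ref{prop:min_jump} applied to this restarted solution. Hence $\mathbb P(\tau_k<\infty)=0$ for every $k$, so $\mathbb P(\tau<\infty)=0$ and \eqref{eq:pathwise_min_jump} holds for every solution.

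For the equality when $s$ is the minimal solution, I would observe first that at any $t>0$ with $\Delta s(t)=0$ the inequality just proved forces the right-hand side of \eqref{eq:pathwise_min_jump} into $[0,\Delta s(t)]=\{0\}$ — it is the infimum of a subset of $(0,\infty)$, hence non-negative or $+\infty$, and $+\infty$ is ruled out — so equality there is automatic. It remains to treat the countably many jump times: since $s$ is right-continuous with left limits and adapted, \cite[Theorem 3.32]{he_wang_yan} covers $\{(t,\omega):s(t)(\omega)\neq s(t-)(\omega)\}$ by the graphs of countably many strictly positive stopping times $\tau_n$; by Theorem \ref{prop:min_soln_jumps} there is, for each $n$, a full-probability event on which \eqref{eq:minimal_jump} holds with equality at $\tau_n$ on $\{\tau_n\in(0,\infty)\}$, and intersecting these events with the one from the first part gives equality in \eqref{eq:pathwise_min_jump} at every $t>0$, almost surely.

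I expect the main obstacle to be the limiting step in the second paragraph: transferring the bound $\tfrac{1}{\lambda\kappa}\nu(t_n,[s(t_n),s(t_n)+z])\ge z$ from the approximating times $t_n$ down to $\tau_k$. This cannot be run directly with $\lim_{\varepsilon\downarrow0}s(\cdot;\varepsilon)$ or with $\nu(\cdot-,\cdot)$, since these involve left-limits and are not right-continuous; the right move is to recast the bound in terms of the manifestly right-continuous quantity $t\mapsto\nu(t,[s(t),s(t)+z])$ before passing to the limit, exactly as in the proof of Theorem \ref{thm:traject_jump}. The subsidiary bookkeeping — that $\tau$ and the $\tau_k$ are genuine stopping times, and that the jump identity $\nu(t-,[s(t-),s(t)])=\lambda\kappa\,\Delta s(t)$ together with the restriction property of $\nu(t,\cdot)$ hold simultaneously for all $t>0$ — is routine and follows the lines of the proof of Theorem \ref{thm:traject_jump} and the discussion around \eqref{eq:measure_nu}.
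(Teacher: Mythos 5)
Your proof is correct and follows essentially the same route as the paper's: the $\tau_k$ stopping-time contradiction scheme lifted from Theorem \ref{thm:traject_jump}, a restart at $\tau_k$ via Lemma \ref{lem:restart} producing initial data that violate the no-initial-jump property, and Theorem \ref{prop:min_soln_jumps} together with the countable covering of jump times for the equality in the minimal case. The only (harmless) repackaging is that you obtain the final contradiction by citing \eqref{eq:minimal_initial_jump} for the restarted solution, whereas the paper re-runs the right-continuity argument from the proof of Proposition \ref{prop:min_jump} starting at \eqref{eq:jump_restart} --- these are the same mechanism.
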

\begin{proof}
	By Lemma \ref{prop:no_blow_up_timing} below, $\nu(t,\mathrm{d}x)$ has a density for all $t>0$. Thus, the proof of the second claim follows in the same way as Theorem \ref{thm:traject_jump}, using Theorem \ref{prop:min_soln_jumps} in place of Theorem \ref{thm:minimality}. To establish \eqref{eq:pathwise_min_jump} we can follow the same arguments but with a slightly different ending (that we could also rely on for the second claim). Invoking Proposition \ref{prop:min_jump}, we can find $\Omega_\star\in \mathcal{F}_\infty$ with $\mathbb{P}(\Omega_\star)=1$ so that $s(t)(\omega) \geq \lim_{\varepsilon \downarrow 0} s(t;\varepsilon)(\omega)$ for all $t>0$ with $s(t)(\omega)\neq s(t-)(\omega)$, and we can then deduce $	\nu(\tau_k,[s(\tau_k),s(\tau_k)+y])\geq \lambda \kappa y$ for all $y\in [0,1/k]$ on $\{\tau_k < \infty\}$ by retracing the proof of Theorem \ref{thm:traject_jump} with the same definition of $\tau_k$. Relying now on the arguments in the proof of Proposition \ref{prop:min_jump} starting from \eqref{eq:jump_restart}, $\mathbb{P}(\tau_k < \infty)>0$ would lead to a contradiction of the right-continuity of the freezing front. Hence $\mathbb{P}(\tau_k < \infty)=0$, and so the claim follows as in the proof of Theorem \ref{thm:traject_jump}.
\end{proof}

\begin{rem}[Functional convergence to the minimal solution]\label{rem:M1} Let $s$ be the minimal solution. Since $0\leq s$ and $\Gamma[s]=s$, repeated applications of $\Gamma$ gives $s^{(n)}\leq s$ for all $n\geq 1$. In the proof of Proposition \ref{prop:initial_aligned}, we thus get $\check{s} \leq s$ and so $\check{s}=s$. Thus, $s^{(n)}$ converges almost surely to the minimal solution in the sense of pointwise convergence at all continuity points $t>0$ of the limit. If the convergence also holds at $t=0$, this is equivalent to almost sure convergence for Skorokhod's M1 topology on $\mathbf{D}^{\uparrow}_{\mathbb{H}}$, see \cite[Corollary~12.5.1]{whitt}. When \eqref{eq:initial_aligned} is satisfied, Proposition \ref{prop:initial_aligned} gives $s^{(n)}(0)=s(0)=s_0$, so there is M1 convergence. If the left-hand side, $\mathrm{LHS}$, of \eqref{eq:initial_aligned} is non-zero, we can redefine $s^{(n)}(0):=s_0 + \mathrm{LHS}$: then we have M1 convergence of $s^{(n)}$ to $s$, since $s(0)=s_0 + \mathrm{LHS}$ by Proposition \ref{prop:min_soln_jumps}. Alternatively, we can modify the M1 topology to not enforce convergence at the initial time (e.g., akin to \cite[Definition 4.5]{jakubowski}) and thus directly have M1 convergence of $s^{(n)}$ to $s$.
\end{rem}

\section{Proofs of Theorem \ref{thm:prob_rep_cont} and Theorem \ref{thm:existence_jump}}

\subsection{Proof of Theorem \ref{thm:prob_rep_cont}} 

\begin{proof}[Proof of Theorem \ref{thm:prob_rep_cont}]
	We consider only the case of a global continuous solution, i.e., $\tau = \infty$. The case $\tau <\infty$ follows analogously by restricting all arguments to $t \land \tau$. Let $(v,s)\in \mathfrak{C}(v_0,s_0)$ be an arbitrary global continuous weak solution.
	
	Given $t\mapsto s(t)$ and the associated filtration $(\mathcal{F}_t)_{t\geq 0}$, we can define $(X^y,\tau^y)$ as in \eqref{eq:prob_rep} for a Brownian motion $B$ that is independent of $(\mathcal{F}_t)_{t\geq 0}$. Crucially, we do \emph{not} insist that the pair $(X^y,\tau^y)$ satisfies \eqref{eq:prob_rep_0}: at this point, $s(t)$ is given exogenously rather than being defined by the second line of \eqref{eq:prob_rep_0}. Next, we define $\nu(t,\mathrm{d}x)$ as in \eqref{eq:measure_nu}, but in terms of $(X^y,\tau^y)$ and the initial condition $\nu_0(\mathrm{d}x):=-v_0(x)\mathrm{d}x$. We will then show that each $\nu(t,\mathrm{d}x)$ has a density that agrees with $v(t,x)$, thus showing that $v$ satisfies the first line of  \eqref{eq:prob_rep_0}. Since, $s(t)-s_0 = \frac{1}{\lambda\kappa }(\int_{s(t)}^\infty v(t,x)\mathrm{d}x - \int_{s_0}^\infty v_0(x)\mathrm{d}x )$, we can then also deduce the second line of \eqref{eq:prob_rep_0} from the definition of $\nu$, and so we will have the desired result.

	For each $t\geq0$, consider the random measure $\mu_t$ on the positive half-line obtained from the translation $\mu_t(A):=\nu(t,A+s(t))$. Let $G_{\varepsilon}(x,y)$ denote the Dirichlet heat kernel on the positive half-line, corresponding to the sub-probability transition density for a Brownian motion killed at the origin. Since $G_{\varepsilon}(x,0)=0$, applying It\^o's formula to $G_\varepsilon(x,X^z_{t\land \tau^z}-s(t\land \tau^z))$ and using the properties of $(\mathcal{F}_t)_{t\geq 0}$, straightforward manipulations yield
	\begin{align}\label{eq:1st_weak_cont}
		\langle \mu_t , G_\varepsilon(x, \cdot )\rangle  &=  	\int_{s_0}^\infty \mathbb{E}[	G_\varepsilon(x,X^z_{t\land \tau^z} - s(t\land \tau^z)) \mid \mathcal{F}_t ] v_0(z)\mathrm{d}z \nonumber \\
		& = \langle \mu_0 , G_\varepsilon(x, \cdot )\rangle  + \kappa \int_0^t \langle \mu_r , \partial_{yy} G_\varepsilon(x,\cdot) \rangle \mathrm{d}r \nonumber \\
		&+ \theta \int_0^t \langle \mu_r , \partial_{y} G_\varepsilon(x,\cdot) \rangle \mathrm{d}W_r -  \int_0^t \langle \mu_r , \partial_{y} G_\varepsilon(x,\cdot) \rangle \mathrm{d}s(r). 
	\end{align}
	Now consider the given solution $(v,s)$. For any $\phi \in C_b^2(\mathbb{R})$, we can use a smooth approximation of  $\phi(t,y):=\phi(t,y-s(t))$ in \eqref{eq:weak_cont} and pass to the limit to obtain
	\begin{equation*}
		\begin{aligned}
			\int_{s(t)}^{\infty} v(t,y) &\phi(y-s(t)) \mathrm{d}y  -  \int_{s_0}^{\infty}\!\!\!\!{v_0(y)} \phi(y-s_0) \mathrm{d}y = \kappa\!\int_0^t\! \int_{s(r)}^{\infty }\! v(r,y) \partial_{yy}\phi(y-s(r)) \mathrm{d}y\mathrm{d}r \\
			&-\int_0^t \int_{s(r)}^{\infty }v(r,y)\partial_y \phi(y-s(r)) \mathrm{d}y \mathrm{d}s(r)+ \lambda\kappa \phi(0) \bigl( s(t) - s_0\bigr)\\ &\qquad\qquad  +\theta \!\int_0^t\! \int_{s(r)}^{\infty } \!v(r,y) \partial_y\phi(y-s(r)) \mathrm{d}y\mathrm{d} W_r .
		\end{aligned}
	\end{equation*}
	Define the measure flow $\tilde{\mu}_t : \mathcal{B}(0,\infty) \rightarrow [0,1]$ by $\tilde{\mu}_t(a,b):= -\int_{a+s(t)}^{b+s(t)} v(t,z) \mathrm{d}z$, for $t\geq0$. Taking $\phi= G_\varepsilon (x,\cdot)$ in the above, for $x\geq 0$, we get the same expression as in \eqref{eq:1st_weak_cont} only for $\tilde{\mu}$ in place of $\mu$. Define $u^\varepsilon_t(x) := \langle \mu_t - \tilde\mu_t , G_\varepsilon(x,\cdot)\rangle $ and note that $u_0^\varepsilon\equiv 0$. Regarding the terms in \eqref{eq:1st_weak_cont}, the identity
	$\partial_y G_\varepsilon(x,y) = -\partial_x G_\varepsilon(x,y) - 2 p_\varepsilon'(x+y)$ gives the decomposition
	\[
	\langle \mu_r - \tilde\mu_r,\, \partial_y G_\varepsilon(x,\cdot) \rangle = -	\partial_x u_r^\varepsilon(x)  - \partial_x \mathrm{e}_r^\varepsilon(x),
	\]
	where we have introduced the error term
	\[
	\mathrm{e}_t^{\varepsilon}(x) := \langle \mu_t - \tilde\mu_t , 2p_\varepsilon(x+\cdot )\rangle,  \quad  p_{\varepsilon}(x)=\frac{1}{\sqrt{2\pi \varepsilon}}e^{-x^2/2\varepsilon}.
	\]
	Using this and $\partial_{yy}G_\varepsilon = \partial_{xx}G_\varepsilon$, taking the difference of the equation \eqref{eq:1st_weak_cont} for $\mu$ and $\tilde{\mu}$ yields
	\begin{align*}
		u_t^\varepsilon(x) &= \kappa \int_0^t \partial_{xx}u_r^\varepsilon(x) \mathrm{d}r  - \theta \int_0^t \partial_{x}u_r^\varepsilon(x) \mathrm{d}W_r  +   \int_0^t \partial_{x}u_r^\varepsilon(x) \mathrm{d}s(r) \\
		& \qquad - \theta \int_0^t \partial_{x}\mathrm{e}^\varepsilon_r(x)\mathrm{d}W_r +  \int_0^t \partial_{x}\mathrm{e}^\varepsilon_r(x)\mathrm{d}s(r) .
	\end{align*}
	
	Set $U^\varepsilon_t(x):= - \int_{x}^\infty u_t^\varepsilon(y) \mathrm{d}y$ so that $\partial_xU_t^\varepsilon (x)=u_t^\varepsilon(x)$. Note that $G_\varepsilon(0,\cdot)=0$ gives $u_t^\varepsilon(0)=0$, and the smoothness and exponential decay of $G_\varepsilon$ is inherited by $U^\varepsilon_t$, $u_t^\varepsilon$ and $\partial_x u_t^\varepsilon$. Thus, the boundary terms vanish when we integrate the equation for $u^\varepsilon_t$, giving
	\[
	\mathrm{d}U^\varepsilon_t(x) = \kappa \partial_x u^\varepsilon_t(x) \mathrm{d}t - \theta u_t^\varepsilon(x) \mathrm{d}W_t + u_t^\varepsilon(x) \mathrm{d}s(t) - \theta \mathrm{e}^\varepsilon_t(x)\mathrm{d}W_t+ \mathrm{e}^\varepsilon_t(x)\mathrm{d}s(t).
	\]
	Applying It\^o's formula (using that $t\mapsto s(t)$ is continuous and non-decreasing), we get
	\begin{align*}
		&\mathrm{d} \bigl(  e^{-s(t)} U^\varepsilon_t(x)^2 \bigr) = - e^{-s(t)}U^\varepsilon_t(x)^2 \mathrm{d}s(t) + \theta^2e^{-s(t)}\bigl( u_t^\varepsilon(x) + \mathrm{e}^\varepsilon_t(x)\bigr)^2 \mathrm{d}t \\
		&\qquad + 2\kappa e^{-s(t)} U^\varepsilon_t(x) \partial_x u_t^\varepsilon(x) \mathrm{d}t + 2 e^{-s(t)} U^\varepsilon_t(x) u_t^\varepsilon(x) \mathrm{d}s(t)  + 2 e^{-s(t)} U^\varepsilon_t(x) \mathrm{e}_t^\varepsilon(x) \mathrm{d}s(t)  \\
		&\qquad - 2\theta e^{-s(t)} U^\varepsilon_t(x) u_t^\varepsilon(x) \mathrm{d}W_t - 2\theta e^{-s(t)} U^\varepsilon_t(x) \mathrm{e}_t^\varepsilon(x) \mathrm{d}W_t  
	\end{align*}
	Integrating in space and performing a few estimates (using integration by parts, Cauchy--Schwarz, and Young's inequality), we arrive at
	\begin{align*}
		& \mathbb{E} \bigl[e^{-s(t)}\Vert U_t^\varepsilon \Vert_2^2\bigr
		] \leq  
		- \mathbb{E} \Bigl[ \int_0^t e^{-s(r)} \Vert U_r^\varepsilon\Vert_2^2 	\mathrm{d}s(r)  \Bigr] +  \delta \mathbb{E} \Bigl[  \int_0^t e^{-s(r)}\Vert U_r^{\varepsilon} \Vert_2^2 \, \mathrm{d} s(r) \Bigr]    \\
		& \quad - 2\kappa\! \int_0^t \mathbb{E} \bigl[e^{-s(r)} \Vert u_r^\varepsilon \Vert_2^2 \bigr] \mathrm{d}r  +  \theta^2 \! \int_0^t 
		\mathbb{E} \bigl[ e^{-s(r)}\Vert u_r^{\varepsilon}\Vert_2^2 \bigr]  \, \mathrm{d}r+\delta  \! \int_0^t 
		\mathbb{E} \bigl[ e^{-s(r)}\Vert u_r^{\varepsilon}\Vert_2^2 \bigr]  \, \mathrm{d}r \\
		&\quad + C_\delta  \mathbb{E} \Bigl[  \int_0^t e^{-s(r)}\Vert \mathrm{e}_r^{\varepsilon} \Vert_2^2 \, \mathrm{d} s(r) \Bigr] +  C_\delta  \!\int_0^t \mathbb{E} \bigl[e^{-s(r)}\Vert \mathrm{e}_r^{\varepsilon}\Vert_2^2\bigr] \mathrm{d} r,
	\end{align*}
	where $\delta>0$ can be chosen as small as we like. Here we have also used that the stochastic integrals are true martingales and that
	\[
	\int_0^\infty \!2 U^\varepsilon_t(x) u_t^\varepsilon(x)\mathrm{d}x = \int_0^\infty \partial_x \bigl( U_t^\varepsilon(x) \bigr)^2 \mathrm{d}x =  -
	U_t^\varepsilon(0)^2  \leq 0.
	\]
	Since $|\theta|< \sqrt{2\kappa}$, we can take $\delta>0$ sufficiently small so that
	\[
	\mathbb{E} \bigl[e^{-s(t)}\Vert U_t^\varepsilon \Vert_2^2\bigr
	] \leq C_\delta  \mathbb{E} \Bigl[  \int_0^t e^{-s(r)}\Vert \mathrm{e}_r^{\varepsilon} \Vert_2^2 \, \mathrm{d} s(r) \Bigr] +  C_\delta  \!\int_0^t \mathbb{E} \bigl[e^{-s(r)}\Vert \mathrm{e}_r^{\varepsilon}\Vert_2^2\bigr] \mathrm{d} r.
	\]
	Finally, from the definition, we have
	\[
	\mathrm{e}_r^{\varepsilon} \leq C e^{-2x^2/\varepsilon} \varepsilon^{-1/2}  \bigl( (\mu_r - \tilde\mu_r)(0,z) + e^{-z^2/2\varepsilon} \bigr),
	\]
	for any $z>0$, and hence
	\[
	\Vert \mathrm{e}_r^{\varepsilon} \Vert_2^2 \leq C \varepsilon^{-1/2} \bigl( \mu_r(0,z)^2 + \tilde\mu_r(0,z)^2 + e^{-z^2/\varepsilon} \bigr),
	\]
	for any $z>0$.  Since $\mu_t(0,z), \tilde\mu_t(0,z) \leq z \Vert v_0 \Vert_{L^\infty}$, we can take $z=\varepsilon^p$ with $p\in(1/4,1/2)$ to get $\Vert \mathrm{e}_r^{\varepsilon} \Vert_2^2 \rightarrow 0$ as $\varepsilon \rightarrow 0$ with a bound that is independent of $r>0$. Consequently, 
	\[
	\lim_{\varepsilon \downarrow 0}  \mathbb{E} \bigl[e^{-s(t)}\Vert U_t^\varepsilon \Vert_2^2\bigr
	]  = 0,
	\]
	for all $t\geq 0$. From this, we readily deduce that, for each $t\geq 0$, we have
	$\langle \mu_t , \phi \rangle = \langle \tilde\mu_t , \phi \rangle $
	for all $\phi \in C_b(\mathbb{R})$ almost surely. Since  $(\mu_t)_{t\geq0}$ and $(\tilde\mu_t)_{t\geq0}$  are c\`adl\`ag processes, it follows that  they are indistinguishable, which completes the proof.
\end{proof}

\subsection{Proof of Theorem \ref{thm:existence_jump}}

\begin{proof}[Proof of Theorem \ref{thm:existence_jump}] 
	Since we assume that \eqref{eq:ic_constraint} holds, we can take a solution $s\in \mathfrak{D}^{\uparrow}_{\mathbb{H}}$ to \eqref{eq:MV} with $s(0)=s_0$, by Theorem \ref{thm:min_soln} and Proposition \ref{prop:initial_aligned}. Now consider the corresponding $\mathcal{F}_t$-adapted random measures $\nu(t,\mathrm{d}x)$ defined in \eqref{eq:measure_nu}, for each $t\geq0$. By Lemma \ref{prop:no_blow_up_timing} below, we get that each $\nu(t,\mathrm{d}x)$ has a (random) density $v(t,\cdot)\in L^1\cap L^\infty(\mathbb{R})$ with respect to Lebesgue which is supported on $(s(t),\infty)$ and satisfies $\Vert v(t,\cdot) \Vert_{L^\infty} \leq \Vert v_0 \Vert_{L^\infty}$ for all $t>0$. Moreover, the fact that $s(0)=s_0$ along with the definition of $\nu(0,\mathrm{d}x)$ also gives $v(0,\cdot)=v_0$ almost everywhere, so $(v,s) \in \mathfrak{D}(v_0,s_0)$. We will show that the pair $(v,s)$ satisfies the weak formulation \eqref{eq:weak_jump} of Definition \ref{def:weak_jumps}. Fix an arbitrary $\phi \in C_b^\infty(\mathbb{R})$. Using It\^o's formula and performing some straightforward manipulations, based in particular on properties of the filtration $(\mathcal{F}_t)_{t\geq 0}$, we arrive at
	\begin{align}\label{eq:1st_weak}
		\int_{s(t)}^\infty &v(t,x) \phi(t,x)\mathrm{d}x  =  	\mathbb{E}[	\phi(t,X^y_{t\land \tau^y}) \mid \mathcal{F}_t ]  - 		\mathbb{E}[\phi(\tau^y,X^y_{ \tau^y}) \mathbf{1}_{\{t\geq \tau^y\}}\mid \mathcal{F}_t ] \nonumber \\
		& = \int_{s_0}^{\infty}\!\!\!\! v_0(x) \phi(0,x) \mathrm{d}x +  \int_0^t \! \int_{s(r)}^{\infty } \!v(r,x)\partial_r \phi(r,x) \mathrm{d}x \mathrm{d}r \nonumber  \\ &\qquad+ \kappa\!\int_0^t \int_{s(r)}^{\infty } v(r,x) \partial_{xx}\phi(r,x) \mathrm{d}x\mathrm{d}r  \nonumber \\
		&\qquad+ \theta\!\int_0^t\! \int_{s(r)}^{\infty } \!v(r,x) \partial_x \phi(r,x) \mathrm{d}x\mathrm{d}{ W_{r}} - \mathbb{E}[\phi(\tau^y,X^y_{ \tau^y}) \mathbf{1}_{\{t\geq \tau^y\}}\mid \mathcal{F}_t ].
	\end{align}
	Now, $s\mapsto s(t)$ is c\`adl\`ag and hence any realisation has countably many jumps in $(0,t]$. Moreover, we can observe from \eqref{eq:MV} that, when $\Delta s(r) \neq 0$, we have $r=\tau^y$ if and only if  $r\leq \tau^y$ and $X^y_{r} \in (s(r-), s(r-) + \Delta s(r)]$. Therefore, using properties of $\mathcal{F}_{t}$ and the definition of $v$, we can write
	\begin{align}\label{eq:2nd_weak}
		\int_{s_0}^\infty\mathbb{E}&[\phi(\tau^y,X^y_{ \tau^y}) \mathbf{1}_{\{t\geq \tau^y\}} \mathbf{1}_{\{\Delta s(\tau^y) \neq 0\}}\mid \mathcal{F}_t ]v_0(y)\mathrm{d}y  \nonumber\\
		&= 	\sum_{0 < r \leq t} \int_{s_0}^\infty\mathbb{E}[\phi(r,X^y_{r})\mathbf{1}_{\{r \leq \tau^y\}}\mathbf{1}_{\{X^y_{r-} \in (s(r-), s(r-) + \Delta s(r)] \}}\mid \mathcal{F}_t ]v_0(y)\mathrm{d}y \nonumber\\
		&= \sum_{0 < r \leq t} \int_{s(r-)}^{s(r-)+\Delta s(r)}v(r-,x)\phi(r,x)\mathrm{d}x. 
	\end{align}
	Next, we can observe that, by the expression for $s(t)$ in \eqref{eq:MV} and the properties of $\mathcal{F}_t$,
	\begin{align}\label{eq:3rd_weak}
		\int_{s_0}^\infty &\mathbb{E}[\phi(\tau^y,X^y_{ \tau^y}) \mathbf{1}_{\{t\geq \tau^y\}} \mathbf{1}_{\{\Delta s(\tau^y)=0\}} \mid \mathcal{F}_t ] v_0(y)\mathrm{d} y  \nonumber \\ &= 	\int_{s_0}^\infty 	\!\mathbb{E}[\phi( \tau^y,s(\tau^y) ) \mathbf{1}_{\{t\geq \tau^y\}} \mathbf{1}_{\{\Delta s(\tau^y) = 0\}} \mid  \mathcal{F}_t ] v_0(y)\mathrm{d} y\nonumber  \\ &= \lambda \kappa \int_0^t \phi(r,s(r)) \mathbf{1}_{\{\Delta s(r)=0\}}\,\mathrm{d} s(r) \nonumber\\
		&= \lambda \kappa \int_0^t \phi(r,s(r-)) \mathrm{d} s(r)  -  \sum_{0<r\leq t} \phi(r,s(r-))\lambda \kappa\Delta s(r).
	\end{align}
	Finally, it remains to observe that, when $\Delta s(r)\neq 0$, the dynamics of \eqref{eq:MV} enforce
	\begin{align}\label{eq:4th_weak}
		\lambda \kappa \Delta s(r) &= \int_{s_0}^\infty \mathbb{P}\bigl(X_{r} \in (s(r-) , s(r)] ,\, r\leq \tau^y \mid \mathcal{F}_t \bigr) v_0(y)\mathrm{d}y \nonumber \\ 
		&= \int_{s(r-)}^{s(r)} v(r-,x)\mathrm{d}x.
	\end{align}
	Combining \eqref{eq:1st_weak}, \eqref{eq:2nd_weak}, \eqref{eq:3rd_weak}, and \eqref{eq:4th_weak}, we finally obtain the weak formulation \eqref{eq:weak_jump}. This completes the proof.
\end{proof}

\section{Proofs of Theorem \ref{thm:jump_nonzero_prob} and Theorem \ref{prop:initial_global_cont}} 
\label{Sect_BU}

In this section, we will prove Theorems \ref{thm:jump_nonzero_prob} and \ref{prop:initial_global_cont}, using the conditional McKean--Vlasov problem \eqref{eq:MV} and, respectively, Theorems \ref{thm:prob_rep_cont} and \ref{thm:existence_jump}. Since the statements are concerned with deterministic initial conditions, we restrict to this throughout. Our overall approach is inspired by \cite[Theorem 2.1]{LS}, but that result is tailored to Dirac initial conditions and does not generalise to our setting, so significant new ideas are needed. Nevertheless, we wish to align our presentation as closely as possible with that of \cite{LS} and related works. Thus, we introduce the new parameters $\sigma := \sqrt{2\kappa - \theta^2}$ and $\alpha := \nu_0((s_0,\infty)) / \lambda \kappa $, and note that we can then rewrite \eqref{eq:MV} as
\begin{equation}\label{eq:McKean}
	\left\{
	\begin{aligned}
		X_t &= \bar{X}_0 + \sigma B_t + \theta W_{t} - \alpha L_t \\[1pt]
		\tau &= \inf\{ t \geq 0 : X_t  \leq 0 \} \\[1pt]
		L_t &= \mathbb{P}(\tau \leq t \mid \mathcal{F}_t )
	\end{aligned} \right.
\end{equation}
with $s(t)=s_0 + \alpha L_t$, where $\bar{X}_0$ is a random variable that is independent of $(B,W)$ and distributed on $(0,\infty)$ according to the (rescaled and translated) probability measure $\bar{\nu}_0(\mathrm{d}x):= \nu_0(\mathrm{d}x+s_0)/\nu_0((s_0,\infty)) $. We single out $L_t$, in place of the front $s(t)$, to align with \cite{LS} and work directly with the conditional cdf of the hitting time $\tau$.

\subsection{Initial considerations}
\label{Sect_BU_Common}

By analogy with the measures $\nu(t,\mathrm{d}x)$ in \eqref{eq:measure_nu}, we define
\begin{equation}\label{eq:nu_t}
	\bar{\nu}(t,\mathrm{d}x):=\mathbb{P}(X_t \in \mathrm{d}x,\,t<\tau \mid \mathcal{F}_t),
\end{equation}
for all $t\geq0$, where $X$ and $\tau$ are given by \eqref{eq:McKean}. We begin by confirming that, for $t>0$, $\bar{\nu}_t$ always has a density $V_t$ with respect to the Lebesgue measure on $(0,\infty)$ which already has a few important implications. When $L_0=0$ (i.e., $s(0)=s_0$), we have $	\bar{\nu}(0,\mathrm{d}x)=\bar{\nu}_0$.

\begin{lem}[Existence of densities]\label{prop:no_blow_up_timing} For all $t>0$, the sub-probability measure $\bar{\nu}(t,\mathrm{d}x)$ in \eqref{eq:nu_t} has a density $V_t\in L^1\cap L^\infty (\mathbb{R})$ with $\Vert V_t \Vert_{L^\infty} \leq 1/\sqrt{2\pi\sigma^2 t}$. Moreover, $\Vert V_{t+s} \Vert_{L^\infty} < \Vert V_t \Vert_{L^\infty} $ for all $s,t>0$; and, if $\bar{\nu}(0,\mathrm{d}x)$ has a density $V_0\in L^\infty$, then $\Vert V_t \Vert_{L^\infty}\leq \Vert V_0 \Vert_{L^\infty}$ for all $t\geq 0$. In particular, no discontinuities can occur after time $t=\alpha^2/2\pi\sigma^2$, and no discontinuities can occur after a given time $t$ on the event $\Vert V_t \Vert_{L^\infty} < \alpha^{-1}$.
\end{lem}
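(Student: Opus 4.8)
The plan is to argue conditionally on $\mathcal{F}_\infty=\sigma(W)$. As already recorded in Section~\ref{sect:McKean}, for events measurable with respect to $\bar X_0$, $B$ and $W|_{[0,t]}$ one may replace $\mathbb{P}(\,\cdot\mid\mathcal{F}_t)$ by $\mathbb{P}(\,\cdot\mid\mathcal{F}_\infty)$; so, having fixed a realisation of $W$ and hence of the increasing c\`adl\`ag path $L$ (equivalently of $s=s_0+\alpha L$), the dynamics in \eqref{eq:McKean} read $X_t=\bar X_0+\sigma\sqrt{1-\rho^2}\,B_t+g(t)$ with $g(t):=\sigma\rho W_t-\alpha L_t$ \emph{deterministic}, and $\tau$ is the first time $X$ hits $0$.

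First I would establish the density together with the bound. As sub-probability measures on $(0,\infty)$,
\[
\bar\nu(t,\cdot)\ \le\ \mathbb{P}(X_t\in\cdot\mid\mathcal{F}_\infty)\qquad\text{and}\qquad\bar\nu(t-,\cdot)\ \le\ \mathbb{P}(X_t\in\cdot\mid\mathcal{F}_\infty),
\]
since on the right the events $\{t<\tau\}$ and $\{t\le\tau\}$ have been discarded. Conditionally on $\mathcal{F}_\infty$ the right-hand side is the law of $\bar X_0+g(t)$ plus an independent centred Gaussian of variance $\sigma^2(1-\rho^2)t$, hence it is absolutely continuous with density $x\mapsto\int\gamma_t(x-g(t)-y)\,\bar\nu_0(\mathrm{d}y)$, where $\gamma_t$ is that Gaussian density; this is bounded pointwise by its maximum $\gamma_t(0)$, which gives the claimed bound on $\|V_t\|_{L^\infty}$. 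Thus $\bar\nu(t,\cdot)$ and $\bar\nu(t-,\cdot)$ have densities $V_t,V_{t-}\in L^1$ (total mass $\le1$) $\cap\,L^\infty$; undoing the rescaling and translation relating $\nu$ to $\bar\nu$, also $\nu(t,\cdot)$ and $\nu(t-,\cdot)$ have densities, with $L^\infty$-norms $\lambda\kappa\alpha\,\|V_t\|_{L^\infty}$ and $\lambda\kappa\alpha\,\|V_{t-}\|_{L^\infty}$ respectively.

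For the strict monotonicity, the key point is that letting the killed process run \emph{without further killing} after time $t$ only adds mass: since $\{t+s<\tau\}\subseteq\{t<\tau\}$,
\[
\bar\nu(t+s,\cdot)\ \le\ \mathbb{P}\big(X_{t+s}\in\cdot,\ t<\tau\mid\mathcal{F}_\infty\big),
\]
and on $\{t<\tau\}$ we may write $X_{t+s}=X_t+\sigma\sqrt{1-\rho^2}(B_{t+s}-B_t)+\big(g(t+s)-g(t)\big)$, where — given $\mathcal{F}_\infty$ — the increment is a centred Gaussian of variance $\sigma^2(1-\rho^2)s$ independent of $\mathbf{1}_{\{t<\tau\}}X_t$ and the last term is a constant. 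Hence the right-hand density is $V_t$ convolved with a centred Gaussian $\gamma$ of variance $\sigma^2(1-\rho^2)s>0$ and translated, so $\|V_{t+s}\|_{L^\infty}\le\|V_t*\gamma\|_{L^\infty}$; and a short Fatou argument shows $\|f*\gamma\|_{L^\infty}<\|f\|_{L^\infty}$ for every non-zero $f\in L^1$ and every non-degenerate centred Gaussian density $\gamma$ (since $f*\gamma\le\|f\|_{L^\infty}$ everywhere, if $\sup(f*\gamma)$ were attained in the limit along $x_n$ then $\|f\|_{L^\infty}-f(x_n-\cdot)\to0$ in $L^1(\gamma)$, so $f(x_n-\cdot)\to\|f\|_{L^\infty}$ a.e. along a subsequence, and Fatou forces $2R\|f\|_{L^\infty}\le\|f\|_{L^1}$ for all $R$, a contradiction). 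The same comparison with $t-$ in place of $t+s$ gives $\|V_{t-}\|_{L^\infty}\le\|V_u\|_{L^\infty}$ for all $u<t$, and together these yield $\|V_{t+s}\|_{L^\infty}<\|V_t\|_{L^\infty}$; the convention $\|V_0\|_{L^\infty}:=+\infty$ is covered automatically since $\|V_s\|_{L^\infty}<\infty$ for $s>0$.

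Finally, the two ``in particular'' statements follow by combining this with the jump relation \eqref{eq:stop_time_jump}. Applied along a countable family of $\mathcal{F}_t$-stopping times exhausting the jump times of $s$ (as in the proof of Theorem~\ref{thm:traject_jump}), \eqref{eq:stop_time_jump} gives, at any jump time $r$,
\[
\Delta s(r)\ =\ \tfrac{1}{\lambda\kappa}\,\nu\big(r-,[s(r-),s(r)]\big)\ \le\ \Delta s(r)\cdot\alpha\,\|V_{r-}\|_{L^\infty},
\]
using the density bound on $\nu(r-,\cdot)$ from the second step; hence $\Delta s(r)>0$ forces $\|V_{r-}\|_{L^\infty}\ge\alpha^{-1}$. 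By the first step this is impossible once $r$ exceeds the stated threshold, and by the monotonicity it is impossible for $r>t$ on the event $\{\|V_t\|_{L^\infty}<\alpha^{-1}\}$. I expect the monotonicity step to be the main obstacle: one must pin down the \emph{uniform} (in the base point) strict decay of the $L^\infty$-norm under Gaussian convolution, and then carry out the measurability / ``all times simultaneously'' bookkeeping needed to pass from a.s.-for-fixed-$t$ statements to the pathwise conclusions about jump times — handled, as usual, via a countable exhausting family of stopping times. The existence of densities and the algebra yielding $\|V_{r-}\|_{L^\infty}\ge\alpha^{-1}$ at a jump are routine by comparison.
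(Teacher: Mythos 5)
Your proposal is correct and follows essentially the same route as the paper: bound $\bar{\nu}(t+s,\cdot)$ by the unkilled evolution started from $\bar{\nu}(t,\cdot)$, so that the density is dominated by a Gaussian convolution (giving both the $t^{-1/2}$ bound and, via strict decrease of the $L^\infty$-norm of an $L^1$ function under convolution with an everywhere-positive unit-mass kernel, the strict monotonicity), and then rule out jumps via \eqref{eq:stop_time_jump}. Your Fatou argument simply makes explicit the strict-decrease step that the paper states in one line.
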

\begin{proof}
	Similarly to the proof of \cite[Lemma 2.2]{LS}, we can observe that, for any $s>0$ and $t\geq0$, we have
	\begin{equation}\label{eq:density_proof}
		\bar{\nu}(t+s,A)\leq \int_A \int_0^\infty \tilde{p}_{s}(y - x - \theta \widetilde{W}_s + \alpha \widetilde{L}_s) \bar{\nu}(t,\mathrm{d}x)\mathrm{d}y, 
	\end{equation}
	for all $A\in\mathcal{B}(0,\infty)$, where $\widetilde{W}_s:=W_{t+s}-W_t$, $\widetilde{L}_s:=L_{t+s}-L_t$, and $ \tilde{p}_{s}$ denotes the Normal density of $\sigma B_s$. The latter is bounded by $1/\sqrt{2\pi\sigma^2 s}$ and $\bar{\nu}(t,\mathrm{d}x)$ is a sub-probability measure, so the first conclusion follows. Now, exploiting $\bar{\nu}(t,\mathrm{d}x)=V_t(x)\mathrm{d}x$, we deduce the second claim from \eqref{eq:density_proof} using that $\tilde{p}_{s}$ integrates to one on $\mathbb{R}$. Finally, \eqref{eq:stop_time_jump} gives that we must have $s(t)=s(t-)$ when $\Vert V_t \Vert_{L^\infty} < \alpha^{-1}$ which completes the proof.
\end{proof}

Throughout this section, we shall need the following notation, which we present here in the form of a definition for easy reference.

\begin{defn}[$\mathcal{A}$, $\mathcal{U}$, and $\mathcal{V}$] For any times $s\leq t$ and constants $m,k\in\mathbb{R}$ as well as $l,\delta>0$, we define
	\begin{align}
		\mathcal{A}_{t; m, l} &:= \{ f: [0, t] \to \mathbb{R} \;\;\textrm{s.t.}\;\; |f(r) - mr| < l \textrm{ for all } r \in [0, t] \}, \label{eq:mathcal_A}\\[4pt]
		\mathcal{U}_{s,t;\delta} &:=\{ f:[s,t] \to \mathbb{R} \;\;\textrm{s.t.}\;\; |f(r) - f(s)| \leq \delta \textrm{ for all } r \in [s, t]  \},\label{eq:mathcal_U}\\[4pt]
		\mathcal{V}_{s,t; k} &:=\{ f: [s, t] \to \mathbb{R} \;\;\textrm{s.t.}\;\; f(r) < k \textrm{ for all } r \in [s, t] \},\label{eq:mathcal_V}
	\end{align}
	where we will, e.g., write $f\in \mathcal{A}_{t; m, l} \cap \mathcal{U}_{s,u; \delta}$ if the restriction of $f$ to $[0,t]$ is in $\mathcal{A}_{t; m, l}$ and the restriction of $f$ to $[s,u]$ is in $\mathcal{U}_{s,u; \delta}$.
\end{defn}

\subsection{Proof of Theorem \ref{thm:jump_nonzero_prob}}\label{Sect:proof_thm_jump_nonzero_prob}

For any given initial condition $(v_0,s_0)$, as in the statement of Theorem \ref{thm:jump_nonzero_prob}, we let $\nu_0(\mathrm{d}x)=-v_0(x)\mathrm{d}x$. Then, also $\bar{\nu}_0(\mathrm{d}x)= \nu_0(\mathrm{d}x+s_0)/\nu_0((s_0,\infty))$ has a density, which we denote by $V_0$, and the condition of Theorem \ref{thm:jump_nonzero_prob} amounts to the following fact: $V_0(c)>\alpha^{-1}$ for some $c \in (0,\infty)$ with $V_0$ right-continuous at $c$.

We set $\delta := 2(V_0(c)-\alpha^{-1})/3>0$. By the right-continuity at $c$, we can find $h>0$ such that $|V_0(c)-V_0(x)|\leq \delta/2$ for $x\in c+(0,h)$, and hence
\begin{equation}\label{eq:initial_density_bounds}
	\alpha^{-1} + \delta \leq V_0(x) \leq \alpha^{-1} + 2\delta, \qquad \textrm{for all } x \in c + (0,h).
\end{equation}
The proof of Theorem \ref{thm:jump_nonzero_prob} exploits \eqref{eq:initial_density_bounds} to show that the dynamics of the problem must then force a blow-up in finite time with strictly positive probability.

The key to this lies in the two lemmas that we shall establish next. They yield suitable localised control over the mass and moment of the sub-probability measures $\bar{\nu}(t,\mathrm{d}x)$ from \eqref{eq:nu_t}, as we move along particular trajectories of the driving Brownian motion $W$. We note that the overall idea of deriving the finite time blow-up from a mass versus moment comparison is similar to \cite{LS}. However, the main result on blow-up in that work is for a Dirac initial condition, so there is no need to consider the finer localised control that we require here. In turn, we must rely on completely new ideas for the two lemmas that follow and their subsequent use in finalising the proof of Theorem \ref{thm:jump_nonzero_prob}.

Before embarking on the detailed arguments, we briefly outline our overall strategy for the proof. Setting $I=(0,h)$, the idea is to first show that, with $L_t$ in \eqref{eq:McKean} evolving continuously, trajectories of $\theta W_t$ that have a suitable negative trend can transport the mass in \eqref{eq:initial_density_bounds} to a corresponding right-neighbourhood $I$ of the freezing front, while ensuring a limited loss of mass locally and keeping the localised moment small, by controlling the diffusive spreading relative to the fast transport. This is made precise in Lemma~\ref{Lem:Stage1}. Next, as long as $L_t$ remains continuous, the localised mass and moment can be seen to satisfy a simple quadratic constraint: roughly, an amount of mass $M$ on $I$ necessitates a moment of order at least $M^2$ when restricted to $I$, where the proportionality is dictated by $\alpha$. This arises from the way in which each increment of mass absorbed from $I$ contributes to shifting the profile towards the front, and it is made precise by Lemma~\ref{Lem:Stage2}. Combining these observations, we are finally able to conclude that, for suitable trajectories of $\theta W_t$, the condition \eqref{eq:initial_density_bounds} will force the mass near the freezing front to become larger than what is allowed by the aforementioned quadratic constraint, and so continuity of $L$ cannot be maintained.


\begin{lem}[Local mass and moment control]
	\label{Lem:Stage1} Fix $\varepsilon>0$, $c>0$, and $I=(0,h)$ with $h>0$. Given $l>0$, there exists $t_0 > 0$, and $m < 0$ such that
	\[
	\bar{\nu}({\bar{\tau}},I) \geq \bar{\nu}_0(c + I) - \varepsilon
	\qquad
	\textrm{and}
	\qquad
	\int_I x \bar{\nu}({\bar{\tau}},\mathrm{d}x) \leq \int_{c + I} (x-c) \bar{\nu}_0(\mathrm{d}x) + \varepsilon,
	\]
	on the event $(\theta W_t)_{t\in[0,t_0]}\in\mathcal{A}_{t_0,m,l}$,
	for some $W$-measurable time $\bar{\tau} \in(0,t_0]$, provided $L$ is continuous on $[0,t_0]$ on the aforementioned event.
\end{lem}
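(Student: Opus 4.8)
The plan is to read the conclusion off the particle picture behind \eqref{eq:nu_t}. Writing $X^x_t := x + \sigma\sqrt{1-\rho^2}\,B_t + \sigma\rho W_t - \alpha L_t$ and $\tau^x := \inf\{t\ge 0 : X^x_t \le 0\}$, and using that $\bar{X}_0$ is independent of $(B,W)$ while $L$ is $\mathcal{F}_t$-adapted, one has $\bar{\nu}(t,A) = \int_0^\infty \mathbb{P}(X^x_t\in A,\,t<\tau^x\mid\mathcal{F}_t)\,\bar{\nu}_0(\mathrm{d}x)$. The strategy is to transport the mass that $\bar{\nu}_0$ places on $c+I$ downward into $I=(0,h)$ by following a trajectory of the common noise $W$ with a steep negative slope. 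Concretely, I would set $Z_t := \sigma\rho W_t - \alpha L_t$ (continuous on $[0,t_0]$ on the event in question, since $L$ is assumed continuous there and $L_0=0$ for the solutions considered, so $Z_0=0$) and
\[
\bar{\tau} := \inf\{t>0 : Z_t \le -c\}.
\]
Given $l>0$, I would choose $t_0>0$ small (fixed only at the very end) and take $m := -(c+\sigma l)/(\sigma t_0)<0$. On $\{(\rho W_t)_{t\le t_0}\in\mathcal{A}_{t_0,m,l}\}$ we get $\sigma\rho W_{t_0} < \sigma m t_0 + \sigma l = -c$, hence $Z_{t_0}<-c$; since $Z$ is continuous with $Z_0=0>-c$, this forces $\bar{\tau}\in(0,t_0)$, $Z_{\bar{\tau}}=-c$, and $Z_s\ge -c$ for all $s\in[0,\bar{\tau}]$, while $\bar{\tau}$ is an $\mathcal{F}_t$-stopping time, hence $W$-measurable.

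Both estimates then reduce to elementary Brownian computations for the particles started in a window around $c$. From $Z_{\bar{\tau}}=-c$ we get $X^x_{\bar{\tau}} = x - c + \sigma\sqrt{1-\rho^2}\,B_{\bar{\tau}}$; and since $Z_s\ge -c$ on $[0,\bar{\tau}]$ gives $-x-Z_s\le -(x-c)$, any particle started at $x>c$ satisfies $\bar{\tau}<\tau^x$ as soon as $\sigma\sqrt{1-\rho^2}\,B_s > -(x-c)$ for all $s\le\bar{\tau}$. For the mass bound I would discard the two end pieces $(c,c+\eta_0)$ and $(c+h-\eta_1,c+h)$, whose $\bar{\nu}_0$-mass is at most $\|V_0\|_{L^\infty}(\eta_0+\eta_1)$; for $x$ in the remaining range the event $\{\bar{\tau}<\tau^x,\,X^x_{\bar{\tau}}\in I\}$ contains $\{\,B\text{ stays above }-\eta_0/(\sigma\sqrt{1-\rho^2})\text{ on }[0,\bar{\tau}]\,\}\cap\{B_{\bar{\tau}}<\eta_1/(\sigma\sqrt{1-\rho^2})\}$, whose conditional probability given $\mathcal{F}_{\bar{\tau}}$ is $\ge 1-\zeta(t_0)$ uniformly in $x$, where $\zeta(t_0)\to 0$ as $t_0\downarrow 0$ by the reflection principle and $\bar{\tau}\le t_0$. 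Choosing first $\eta_0,\eta_1$ small and then $t_0$ small gives $\bar{\nu}(\bar{\tau},I)\ge\bar{\nu}_0(c+I)-\varepsilon$.

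For the moment bound I would split $\int_0^\infty (x-c)\,\mathbb{P}(X^x_{\bar{\tau}}\in I,\,\bar{\tau}<\tau^x\mid\mathcal{F}_{\bar{\tau}})\,\bar{\nu}_0(\mathrm{d}x)$ over $\{x<c\}$, $\{c<x<c+h\}$, and $\{x>c+h\}$: the first integral is $\le 0$, the second is $\le\int_{c+I}(x-c)\,\bar{\nu}_0(\mathrm{d}x)$, and the third tends to $0$ as $t_0\downarrow 0$ because on the relevant event $B_{\bar{\tau}}<-(x-c-h)/(\sigma\sqrt{1-\rho^2})$, so a Gaussian tail bound together with dominated convergence (treating $x-c\le 2h$ and $x-c>2h$ separately, using only that $\bar{\nu}_0$ is a probability measure) controls it even though $\bar{\nu}_0$ need not have a finite first moment. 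Adding the leftover $\sigma\sqrt{1-\rho^2}\,\mathbb{E}[\,|B_{\bar{\tau}}|\mid\mathcal{F}_{\bar{\tau}}]\le C\sqrt{t_0}$ and taking $t_0$ small enough (now also for this estimate) yields $\int_I x\,\bar{\nu}(\bar{\tau},\mathrm{d}x)\le\int_{c+I}(x-c)\,\bar{\nu}_0(\mathrm{d}x)+\varepsilon$; the parameter $m$ is then fixed by the formula above.

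The main obstacle, and the reason $\bar{\tau}$ is defined as the \emph{first} passage of $Z$ below $-c$, is to lose at most $\varepsilon$ of the mass to absorption while the window is pushed all the way down by the fixed distance $c$: holding $Z\ge -c$ on $[0,\bar{\tau}]$ means a particle started above $c$ can only be absorbed if its idiosyncratic part dips below $-(x-c)$, and taking $t_0$ small (hence $|m|$ large) makes both that dip and the displacement $\sigma\sqrt{1-\rho^2}B_{\bar{\tau}}$ uniformly small in probability. The only other delicate point is the tail of the moment integral over $\{x>c+h\}$, which is handled purely by Gaussian concentration, since no moment hypothesis is imposed on $\bar{\nu}_0$.
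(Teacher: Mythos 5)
Your proposal is correct and follows essentially the same route as the paper: you introduce the same auxiliary first-passage time $\bar{\tau}$ (the first time $\rho W - \alpha L$ drops by $c$), use the event $\mathcal{A}_{t_0;m,l}$ with $t_0$ and $m$ tied by $mt_0 + l = -c$ to force $\bar{\tau}\le t_0$ small, prove the mass bound by trimming the ends of $c+I$ and controlling both absorption and displacement through the smallness of the idiosyncratic Brownian motion on $[0,t_0]$, and prove the moment bound by discarding the absorption indicator. Your treatment of the moment integral's tail over $\{x>c+h\}$ is somewhat more explicit than the paper's (which is terse at that point), but the argument is the same.
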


\begin{proof}
	First, we define the auxiliary process
	\[
	H_s  := c + \theta W_s  - \alpha L_s, \qquad \text{for } s\geq 0,
	\]
	and let
	\[
	\bar{\tau}:= \inf\{s>0: H_s   \leq 0 \},
	\]
	where $\bar{\tau}$ is $W$-measurable, since $H$ is defined in terms of $L$ and $W$. Since $H_0=c>0$ and paths $s\mapsto H_s$ are right-continuous, we have $\bar{\tau}>0$. Given $m<0$, let $t_0=t_0(m,l)$ be the first time for which the deterministic linear path $s\mapsto c+ l + ms $ hits zero. On the event $(\theta W_s)_{s\in[0,t_0]}\in \mathcal{A}_{t_0;m,l}$, we have $\theta W_s \leq ms + l$ for all $s\leq t_0$, so we deduce that
	\[
	0<  {\bar{\tau}} \leq t_0 = - \frac{c+l}{m}
	\]
	on this event, since $\alpha L$ is non-negative. For the remainder of the proof, we restrict attention to the event $(\theta W_s)_{s\in[0,t_0]}\in \mathcal{A}_{t_0;m,l}$ and assume $L$ is continuous on $[0,t_0]$ on this event, as in the statement of the lemma. Now rewrite $X$ as
	\[
	X_s = (X_0-c) + \sigma B_s + H_s, \qquad \text{for } s\geq 0,
	\]
	and notice that $H_{\bar{\tau}}=0$, since ${\bar{\tau}} \leq t_0$ and we are assuming continuity of $L$ on $[0,t_0]$. Therefore, we can write
	\begin{align}
		\bar{\nu}({{\bar{\tau}}},I) &= \mathbb{P}\bigl (X_0 - c + \sigma B_{\bar{\tau}} \in I, \; \inf_{s\leq {\bar{\tau}} }\{ X_0 - c + \sigma B_s + H_s  \}>0 \mid \mathcal{F}_\infty \bigr) \nonumber\\
		&=\mathbb{P}(X_0 - c + \sigma B_{\bar{\tau}} \in I \mid \mathcal{F}_\infty)\nonumber \\
		&\qquad - \mathbb{P}\bigl (X_0 - c + \sigma B_{\bar{\tau}} \in I, \; \inf_{s\leq {\bar{\tau}}}\{ X_0 - c + \sigma B_s + H_s  \}\leq 0 \mid \mathcal{F}_\infty \bigr)\nonumber\\
		&:=C_1({\bar{\tau}}) - C_2({\bar{\tau}}).\label{eq:Defn_C_1_and_C_2}
	\end{align}
	For the first probability, observe that
	\begin{align}
		C_1({\bar{\tau}}) &\geq  \mathbb{P}\bigl(X_0-c + \sigma B_{\bar{\tau}} \in I \mid |B_{\bar{\tau}}| \leq r/\sigma,\; W\bigr) \mathbb{P}(|B_{\bar{\tau}}| \leq r/\sigma \mid \mathcal{F}_\infty) \nonumber \\
		&\geq \mathbb{P}\bigl(X_0\in (c+r , c+h -r)\bigr) \bigl(1-\mathbb{P}(|B_{\bar{\tau}}| > r/\sigma \mid \mathcal{F}_\infty ) \bigr),
		\label{eq:C_1_bound}
	\end{align}
	for any $r\in(0,h/2)$. By first fixing $r>0$ sufficiently small, we can ensure
	\begin{align*}
		\mathbb{P}\bigl(X_0 \in (c+r , c+h -r)\bigr) & = \bar{\nu}_0(c+I) - \bar{\nu}_0((c,c+r]) -\bar{\nu}_0([c+h-r,c+h)) \\
		&\geq \bar{\nu}_0(c+I) - \varepsilon/8.
	\end{align*}
	Next, given this $r>0$, we can use the fact that ${\bar{\tau}} \leq t_0$ to force
	\begin{align}
		\mathbb{P}(|B_{\bar{\tau}}| > r/\sigma \mid \mathcal{F}_\infty ) &= \mathbb{P}(\sqrt{{\bar{\tau}}}|B_1| > r/\sigma \mid \mathcal{F}_\infty ) \nonumber \\ &\leq \mathbb{P}(\sqrt{t_0}|B_1| > r/\sigma \mid \mathcal{F}_\infty) < \varepsilon/8,\label{eq:C_1_bound_2}
	\end{align}
	by simply taking $m<0$ sufficiently negative so that $t_0=t_0(m,l)$ is small. Inserting this in \eqref{eq:C_1_bound}, we finally obtain
	\begin{equation}\label{eq:final_bound_C_1}
		C_1({\bar{\tau}}) \geq \bigl( \bar{\nu}_0(c+I) - \varepsilon/8  \bigr)(1- \varepsilon/8) \geq  \bar{\nu}_0(c+I) - \varepsilon/4,
	\end{equation}
	for our choice of $r>0$ and $m<0$. Coming back to the second probability, note that $H_s \geq 0$ for all $s\leq {\bar{\tau}}$, so we have
	\[
	C_2({\bar{\tau}}) \leq \mathbb{P}\bigl (X_0 - c + \sigma B_{\bar{\tau}} \in I, \;  A_{\bar{\tau}} \leq 0 \mid \mathcal{F}_\infty \bigr),
	\]
	where
	\[
	A_{\bar{\tau}} := \inf_{s\leq {\bar{\tau}}}\{ X_0 - c + \sigma B_s\}.
	\]
	Splitting our analysis on the event $ |B_{\bar{\tau}}| \leq r/\sigma$ and its complement, we then get
	\begin{align*}
		C_2({\bar{\tau}})
		&\leq \mathbb{P}\bigl (X_0 - c + \sigma B_{\bar{\tau}} \in I, \; A_{\bar{\tau}}\leq 0, \; |B_{\bar{\tau}}| \leq r/\sigma \mid \mathcal{F}_\infty \bigr) +\mathbb{P}(|B_{\bar{\tau}}| > r/\sigma \mid \mathcal{F}_\infty) \\
		&\leq \mathbb{P}\bigl (X_0 - c + \sigma B_{\bar{\tau}} \in I, \; A_{\bar{\tau}} \leq 0, \; |B_{\bar{\tau}}| \leq r/\sigma \mid \mathcal{F}_\infty \bigr) + \frac{\varepsilon}{4},
	\end{align*}
	where we have used the bound from \eqref{eq:C_1_bound_2}.
	By lowering $r>0$ (and then taking $m<0$ more negative), if necessary, we can assume
	$\bar{\nu}_0(c+[-r,r])<\varepsilon/4$. Consequently, we can write
	\begin{align*}
		C_2({\bar{\tau}}) &\leq \mathbb{P}\bigl(X_0 \in (c-r, c+h+r), \; A_{\bar{\tau}} \leq 0 \mid \mathcal{F}_\infty \bigr) + \frac{\varepsilon}{4} \\ &\leq \mathbb{P}\bigl(X_0 \geq c+r, \; A_{\bar{\tau}} \leq 0 \mid \mathcal{F}_\infty \bigr) + \frac{\varepsilon}{2} \\
		&\leq \mathbb{P}\bigl(\, \inf_{s\leq {\bar{\tau}}}\{ r + \sigma B_s\} \leq 0 \mid \mathcal{F}_\infty \bigr) + \frac{\varepsilon}{2}
	\end{align*}
	Finally, it remains to observe that
	\[
	\mathbb{P}\bigl( \, \inf_{s\leq {\bar{\tau}}}\{ r + \sigma B_s\} \leq 0 \mid \mathcal{F}_\infty \bigr) \leq \mathbb{P}\bigl( \, \inf_{s\leq t_0}\{ r + \sigma B_s\} \leq 0 \bigr),
	\]
	since ${\bar{\tau}} \leq t_0$, where the right-hand side can be made less than $\varepsilon/4$ for $t_0>0$ small enough, independently of $W$. Therefore, we get
	\[
	C_2({\bar{\tau}}) \leq \mathbb{P}\bigl( \, \inf_{s\leq t_0}\{ r + \sigma B_s\} \leq 0 \bigr)+\frac{\varepsilon}{2} \leq  \frac{3}{4}\varepsilon,
	\]
	by taking $m<0$ sufficiently negative (so that $t_0$ is small enough). Together with \eqref{eq:Defn_C_1_and_C_2} and \eqref{eq:final_bound_C_1}, the above gives
	\[
	\bar{\nu}({{\bar{\tau}}},I) = C_1({\bar{\tau}}) - C_2({\bar{\tau}}) \geq \bar{\nu}_0(c+I) - \frac{\varepsilon}{4} - \frac{3\varepsilon}{4} = \bar{\nu}_0(c+I)  - \varepsilon,
	\]
	for the $W$-random time ${\bar{\tau}}$, where we recall that this inequality is strictly for the event $(\theta W_s)_{s\in[0,t_0]}\in \mathcal{A}_{t_0;m,l}$ and relies on the assumption that $L$ is continuous on $[0,t_0]$ on this event.
	
	Having proved the first claim of the lemma, we now proceed to verify the upper bound. This task is substantially simpler, since we can ignore the absorption at the origin. In particular, we simply have
	\[
	\int_I x \bar{\nu}(t,\mathrm{d} x) = \mathbb{E}[\mathbf{1}_{\{X_t \in I\}} \mathbf{1}_{\{t < \tau\}} X_t  \mid \mathcal{F}_\infty]   \leq  \mathbb{E}[\mathbf{1}_{\{X_t \in I\}} X_t  \mid \mathcal{F}_\infty],	
	\]
	noting that $I \subseteq (0,\infty)$. As before, we restrict to the event $(\theta W_s)_{s\in[0,t_0]}\in \mathcal{A}_{t_0;m,l}$ and assume continuity of $L$ on $[0,t_0]$ on this event. Then $H_{\bar{\tau}}=0$, since ${\bar{\tau}} \leq t_0$, and hence we get
	\[
	\int_I x \bar{\nu}({{\bar{\tau}}},\mathrm{d}x) \leq  \mathbb{E}[\mathbf{1}_{\{X_0 + \sigma B_{\bar{\tau}}   \in c+I\}} (X_0-c+\sigma B_{\bar{\tau}} )  \mid \mathcal{F}_\infty] ,
	\]
	Taking $m<0$ sufficiently negative, we can make $t_0=t_0(m,l)$ as close to zero as we like. Using again that ${\bar{\tau}}\leq t_0$, the previous inequality therefore implies
	\[
	\int_I x \bar{\nu}({{\bar{\tau}}},\mathrm{d}x) \leq  \mathbb{E}[\mathbf{1}_{\{X_0  \in c+I\}} (X_0-c )  \mid \mathcal{F}_\infty]  + \varepsilon = \int_{c+I} (x-c) \bar{\nu}_{0}(\mathrm{d}x) + \varepsilon,
	\]
	for $m<0$ sufficiently negative, as required. This completes the proof.
\end{proof}

Now that we can control the mass and the moment, we need a result that allows us to compare the two.


\begin{lem}[Local mass-moment inequality]
	\label{Lem:Stage2}
	Let $\varepsilon>0$ be given. Fix any $A \in \mathcal{B}(0,\infty)$, $l \in (0, \varepsilon/4]$, $m < 0$, and $t_0 > 0$, and set $k:=mt_0+l$. Then, there exists a large enough time $t_1>t_0$ with the property that: if $L$ is continuous on $[0,t_1]$, on the event
	\[
	E_{t_0,t_1}:=\{(\theta W_s)_{s\in[0,t_1]} \in \mathcal{A}_{t_0;m,l}\cap \mathcal{V}_{t_0,t_1;k}\},
	\]
	then we have
	\[
	\frac{\alpha}{2} \bar{\nu}(t,A)^2 \leq \int_A x \bar{\nu}(t,\mathrm{d}x) + \varepsilon,
	\]
	for all $t\in[0,t_0]$, on the aforementioned event $E_{t_0,t_1}$.
\end{lem}

\begin{proof}
	Let $A\in\mathcal{B}(0,\infty)$. To simplify the presentation, we  introduce the operations
	\[
	\mathbb{P}_t^A(\,\cdot \mid \mathcal{F}_\infty):=\mathbb{P}(\, \cdot\, , X_t \in A,\; t<\tau \mid \mathcal{F}_\infty )
	\]
	and, correspondingly,
	\[
	\mathbb{E}_t^A[\,\cdot \mid \mathcal{F}_\infty]:=\mathbb{E}[\, (\cdot)\,  \mathbf{1}_{\{X_t \in A\}} \mathbf{1}_{\{t<\tau\}} \mid \mathcal{F}_\infty ]
	\]
	for $t\geq0$. In other words, we are restricting the background space to the event $\{X_t \in A,\; t<\tau\}$. Given a time $t\geq0$, we wish to consider the absorbed process $X$ on $[t,\infty)$ as starting at time $t$ on the event $\{t<\tau\}$, and we can then study the process depending on the position of $X_t$. In particular, we can write the loss on $[t,s]$, for $s\geq t$, as
	\begin{align}\label{eq:lossA}
		L_{s } - L_t = \mathbb{P}( t < \tau \leq s  \mid \mathcal{F}_\infty)   = L^{A,t}_{ s } + L^{A^{\complement},t}_{s},
	\end{align}
	where
	\[
	L_{s}^{A,t}:= \mathbb{P}_t^A(\tau \leq s \mid \mathcal{F}_\infty).
	\]
	As in the statement of the lemma, we assume that, for some $t_1>t_0$ to be determined, $L$ is continuous on $[0,t_1]$ on the event $(\theta W_s)_{s\in[0,t_1]} \in \mathcal{A}_{t_0;m,l}\cap \mathcal{V}_{t_0,t_1;k}$. Given $t \in [0,t_0]$, we then have
	\begin{align}\label{eq:X_started_at_t}
		0 \leq X_{s \wedge \tau} = X_t + \sigma(B_{s \wedge \tau} - B_{t}) + \theta (W_{s \wedge \tau} - W_t) - \alpha (L_{s\wedge\tau}-L_t),
	\end{align}
	for all $s\in[t,t_1]$ on this event.
	Noting that $s\wedge \tau \geq t$ if $s\in[t,t_1]$ and $t<\tau$, optional stopping gives
	\[
	\mathbb{E}_t^A\bigl[B_{s\land \tau} - B_t \mid \mathcal{F}_\infty] = 	\mathbb{E}[\mathbb{E}[B_{s\land \tau} - B_t \mid X_0, (B_r)_{r\leq t} , W] \mathbf{1}_{\{t<\tau\}} \mathbf{1}_{\{X_t \in A\}}\mid \mathcal{F}_\infty\bigr]= 0.
	\]
	Therefore, rearranging and applying $\mathbb{E}_t^A$ conditional on $\mathcal{F}_\infty$ in \eqref{eq:X_started_at_t} yields
	\begin{equation}\label{eq:LA_bound}
		\alpha \mathbb{E}^A_t [ L_{s \wedge \tau} - L_t \mid \mathcal{F}_\infty] \leq  \int_A x\bar{\nu}(t,\mathrm{d}x) + \mathbb{E}^A_t[ \theta W_{s \wedge \tau} - \theta W_t \mid \mathcal{F}_\infty],
	\end{equation}
	for $s\in[t,t_1]$, on the event $(\theta W_s)_{s\in[0,t_1]} \in \mathcal{A}_{t_0;m,l}\cap \mathcal{V}_{t_0,t_1;k}$, where we have also used that
	\[
	\mathbb{E}_t^{A}[X_t \mid \mathcal{F}_\infty]=\int_A{x}\bar{\nu}(t,\mathrm{d}x).
	\]
	Next, in view of $m < 0$, $k = mt_0 + l$, and $0<l\leq \varepsilon/4$,  we have
	\[
	\mathbb{E}^A_t[  \theta W_{s \wedge \tau} -  \theta W_t \mid \mathcal{F}_\infty] \leq 2 l \leq \varepsilon/2,
	\]
	for all $s\in[t,t_1]$ with $t\in[0,t_0]$, whenever $(\theta W_s)_{s\in[0,t_1]} \in \mathcal{A}_{t_0;m,l}\cap \mathcal{V}_{t_0,t_1;k}$, since  $s \land \tau \geq t$ if $t<\tau$. Furthermore, by definition of $L^{A,t}$ and \eqref{eq:lossA}, we have
	\begin{align*}
		\mathbb{E}^A_t [ L_{s \wedge \tau} - L_t \mid \mathcal{F}_\infty] = \int_0^\infty (L_{s \wedge r} - L_t)\mathrm{d}L_{r}^{A,t} \geq \int_t^\infty L^{A,t}_{s \wedge r} \mathrm{d}L_{r}^{A,t}.
	\end{align*}
	Observe that $s\mapsto L_{s}^{A,t}$ is increasing, and $\Delta L^{A,t}_s\leq \Delta L_s$, for all $s\geq t$, so $L^{A,t}$ is continuous whenever $L$ is. In particular, $L^{A,t}$ is continuous and of finite variation, and so we can explicitly evaluate the integral
	\[
	\int_t^\infty L^{A,t}_{s \wedge r} \mathrm{d}L_{r}^{A,t} = \frac{1}{2} ( L^{A,t}_{s} )^2 + L^{A,t}_{s}(L^{A,t}_{\infty} - L^{A,t}_{s} ),
	\]
	for $s\in[t,t_1]$, assuming $L$ is continuous on $[0,t_1]$. Putting these observations together, it follows from \eqref{eq:LA_bound} that
	\begin{equation}\label{eq:LA_final_bound}
		\frac{\alpha}{2} ( L^{A,t}_{s} )^2  \leq  \alpha \int_t^\infty L^{A,t}_{s \wedge r} \mathrm{d}L_{r}^{A,t} \leq  \int_A x\bar{\nu}(t,\mathrm{d}x) + \frac{\varepsilon}{2},
	\end{equation}
	for $s\in[t,t_1]$, on the event $(\theta W_s)_{s\in[0,t_1]} \in \mathcal{A}_{t_0;m,l}\cap \mathcal{V}_{t_0,t_1;k}$. Since Brownian motion hits every level with probability one, we have 
	\[
	\mathbb{P}_t^A(s < \tau \mid \mathcal{F}_\infty)  \leq \mathbb{P}(s < \tau \mid \mathcal{F}_\infty) \rightarrow 0,
	\]
	almost surely, as $s \to \infty$.
	By construction of the event $(\theta W_s)_{s\in[0,t_1]} \in \mathcal{A}_{t_0;m,l}\cap \mathcal{V}_{t_0,t_1;k}$, on which we in particular have $\theta W_s \leq k$ for all $s\in[t_0,t_1]$, we can thus take the deterministic time $t_1>t_0$ large enough such that, on this event,
	\[
	\frac{\alpha}{2} (L_{t_1}^{A,t})^2=\frac{\alpha}{2} \bigl( \bar{\nu}(t,A) - \mathbb{P}_t^A( t_1 < \tau \mid \mathcal{F}_\infty) \bigr)^2 \geq \frac{\alpha}{2} \bar{\nu}(t,A)^2 - \frac{\varepsilon}{2},
	\]
	for all $t\in[0,t_0]$. Setting this into \eqref{eq:LA_final_bound} gives the result.
\end{proof}


From here, we have everything we need to complete the proof of Theorem \ref{thm:jump_nonzero_prob}, by exploiting \eqref{eq:initial_density_bounds} and putting together the two lemmas.

\begin{proof}[Proof of Theorem \ref{thm:jump_nonzero_prob}]
	Given the shape of the initial condition, we let $c>0$, $h>0$, and $\delta>0$ be fixed as in \eqref{eq:initial_density_bounds}. Next, for reasons that will be clear at the end of this proof, we then choose $\gamma=\gamma(\alpha, \delta) \in (0,1)$ such that
	\begin{equation}
		\label{eq:SetUpForContradiction}
		\alpha \delta (1 - \gamma)^2 > 6\gamma,
	\end{equation}
	and finally we fix a value $\varepsilon=\varepsilon(h,\gamma, \delta) > 0$ satisfying 
	\begin{equation}\label{eq:fix_varepsilon}
		\varepsilon \leq \gamma \delta h \min(1, h).
	\end{equation}
	
	Fixing also some $l\in(0,\varepsilon/4]$, we let $t_0>0$ and $m<0$ be given by Lemma \ref{Lem:Stage1}. For these values, we then let $t_1>t_0$ be given by Lemma \ref{Lem:Stage2} with $A=I$, where we recall our notation $I=(0,h)$. Now let $(v,s)\in \mathfrak{D}(v_0,s_0)$ be any c\`adl\`ag weak solution in the sense of Definition \ref{def:weak_jumps} and suppose, for a contradiction, that $\mathbb{P}(\varsigma = +\infty)=1$. Then $(v,s)$ is a continuous weak solution for which Theorem  \ref{thm:prob_rep_cont} applies and gives us a probabilistic representation which we rewrite as \eqref{eq:McKean}. By the assumption of continuity, the process $L$ from \eqref{eq:McKean} is in particular continuous on $[0,t_1]$, so Lemma \ref{Lem:Stage2} applies. Moreover, we let ${\bar{\tau}}$ be the $W$-measurable random time provided by Lemma \ref{Lem:Stage1}. Since $0<{\bar{\tau}} \leq t_0$, it follows from Lemmas \ref{Lem:Stage1} and \ref{Lem:Stage2} that
	\begin{equation}\label{eq:bound_nu_alpha}
		\frac{\alpha}{2}(\bar{\nu}_0(c + I) - \varepsilon)^2 
		\leq \frac{\alpha}{2} \bar{\nu}({{\bar{\tau}}},I)^2
		\leq \int_I x \bar{\nu}({{\bar{\tau}}},\mathrm{d}x) + \varepsilon
		\leq \int_{c + I} (x-c) \bar{\nu}_0(\mathrm{d}x) + 2\varepsilon
	\end{equation}
	on the event $\{(\theta W_s)_{s\in[0,t_1]} \in \mathcal{A}_{t_0;m,l}\cap \mathcal{V}_{t_0,t_1;k}\}$. Furthermore, by \eqref{eq:initial_density_bounds}, we have
	\[
	\bar{\nu}_0(c + I) \geq h(\alpha^{-1} + \delta)
	\]
	and
	\[
	\int_{c + I}(x-c) \bar{\nu}_0(\mathrm{d}x) 
	\leq  (\alpha^{-1} + 2\delta) \int_0^h x \mathrm{d}x 
	= \frac{1}{2}(\alpha^{-1} + 2\delta) h^2.
	\]
	Together with \eqref{eq:bound_nu_alpha}, this yields
	\[
	\alpha ((\alpha^{-1} + \delta)h - \varepsilon)^2 
	\leq  (\alpha^{-1} + 2\delta) h^2 + 4\varepsilon.
	\]	
	On the left-hand side, we can use $\varepsilon \leq \gamma \delta h < (\alpha^{-1} + \delta)h$, and we can use $\varepsilon \leq \gamma \delta h^2$ on the right-hand side. Exploiting this, and dividing through by $h^2$, we in turn get
	\[
	\alpha (\alpha^{-1} + (1- \gamma)\delta)^2 
	\leq \alpha^{-1} + (2+4\gamma)\delta,
	\]
	which simplifies to
	\[
	\alpha\delta(1 - \gamma)^2 \leq 6\gamma.
	\]
	This contradicts our choice of $\gamma$ in \eqref{eq:SetUpForContradiction}, so we conclude that $\mathbb{P}(\varsigma < + \infty)>0$.
\end{proof}

\subsection{Proof of Theorem \ref{prop:initial_global_cont}}

We shall first establish an auxiliary lemma that, in particular, rules out an immediate blow-up, and then we exhibit an event of strictly positive probability on which the solution remains continuous for all time, thus proving Theorem \ref{prop:initial_global_cont}.

Our argument proceeds in three steps. First, Lemma~\ref{lem:density_control} confirms that the smoothing effect of the diffusion causes the density $V_t$ from Lemma \ref{prop:no_blow_up_timing} to drop strictly below the critical value $\alpha^{-1}$ near the interface, for a short random amount of time. Secondly, we introduce an auxiliary McKean--Vlasov problem \eqref{eq:aux_system_shift_mass}, obtained by shifting the initial condition by a small amount towards the interface. We then use a comparison argument to show that, for trajectories of $\theta W_t$ with a controlled positive trend in some interval of time, the loss process $L_t$ from \eqref{eq:McKean} is dominated by that of the auxiliary problem. Since the auxiliary problem is constructed so that its density stays subcritical near the front, this extends the continuity of $L_t$ to a certain fixed deterministic time $t_\star>0$. Finally, beyond this time $t_\star$, we let $\theta W_t$ push the bulk of mass away from the front for some additional time, allowing the diffusive spreading to flatten the entire density by enough that no further blow-ups can occur.

\begin{lem}[Density control near the interface]\label{lem:density_control}
	Consider a solution $s\in \mathfrak{D}^{\uparrow}_{\mathbb{H}}$ to \eqref{eq:MV} with $s(0)=s_0$. Equivalently, $L\in \mathfrak{D}^{\uparrow}_{\mathbb{H}}$ with $L_0=0$ in \eqref{eq:McKean}.
	Suppose that $\bar{\nu}_0$ has a density $V_{0}$ with $V_0\leq C\mathbf{1}_{(0,x_0)} + D\mathbf{1}_{[x_0,\infty)}$, for some constants $x_0>0$ and $D\geq C >0$.
	Then there exists a $W$-measurable random time $\tau_{\star}>0$ and a fixed $x_{\star}>0$ such that $V_{t}(x)\leq(1-\delta(t))C$ on $(0,x_\star)$ for all $t\in(0,\tau_\star)$, where $\delta(t)$ is increasing in $t$ and $\delta(t)>0$ for $t>0$.
\end{lem}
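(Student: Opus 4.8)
The plan is to localise to a short random time window on which the moving barrier implicit in \eqref{eq:McKean} is uniformly close to the origin, and then to run a reflection‑principle estimate against a \emph{fixed} comparison barrier, splitting the contribution of the initial mass at $x_0$. Recall first that for $t>0$ one has $\bar\nu(t,\mathrm{d}x)=\int_0^\infty \mathbb{P}(X^y_t\in\mathrm{d}x,\,t<\tau^y\mid\mathcal{F}_\infty)V_0(y)\,\mathrm{d}y$, where $X^y_s=y+\tilde\sigma B_s+\beta_s$ with $\tilde\sigma:=\sqrt{2\kappa-\theta^2}$, $\beta_s:=\sigma\rho W_s-\alpha L_s$, and $\tau^y:=\inf\{s:X^y_s\le 0\}$; here we use that $L$ is $\mathcal{F}$‑adapted (hence $W$‑measurable) and that $B\perp\mathcal{F}_\infty$, so given $\mathcal{F}_\infty$ the process $B$ is still a Brownian motion while $\beta$ is a known path. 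Set $\eta_s:=\sup_{r\le s}|\sigma\rho W_r|$ and $\mu_s:=\alpha L_s+\eta_s$. Since $L$ is increasing and right‑continuous with $L_0=0$ and $W$ is continuous with $W_0=0$, we have $\mu_s\downarrow 0$ as $s\downarrow 0$, so for any fixed $\eta_0\in(0,x_0/2)$ the random time $\tau^{(0)}_\star:=\inf\{s>0:\mu_s\ge\eta_0\}$ is $W$‑measurable and strictly positive almost surely.

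The key point is that $\alpha L_s\ge 0$, whence $\beta_s\le\eta_s\le\eta_0$, so the moving barrier satisfies $-\beta_s\ge-\eta_0$ for all $s\le t$ whenever $t<\tau^{(0)}_\star$, with \emph{no} continuity of $L$ needed. Therefore $\{t<\tau^y\}\subseteq\{\,y+\tilde\sigma B_s>-\eta_0\text{ for all }s\le t\,\}$, and conditioning on $\mathcal{F}_\infty$ the reflection principle gives, with $v:=\tilde\sigma^2 t$ and $x':=x-\beta_t$,
\[
\mathbb{P}(X^y_t\in\mathrm{d}x,\,t<\tau^y\mid\mathcal{F}_\infty)\le\bigl(g_v(x'-y)-g_v(x'+y+2\eta_0)\bigr)\,\mathrm{d}x ,
\]
where $g_v$ is the centred Gaussian density of variance $v$; this is legitimate since $y>0>-\eta_0$ and, for $x>0$, $x'\ge x-\eta_0>-\eta_0$. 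Integrating against $V_0$ and splitting the $y$‑integral at $x_0$, with $V_0\le C$ on $(0,x_0)$ and $V_0\le D$ on $[x_0,\infty)$: for $y\in(0,x_0)$ a direct computation of $\int_0^{x_0}[g_v(x'-y)-g_v(x'+y+2\eta_0)]\,\mathrm{d}y$ in terms of the standard normal tail $\bar\Phi$ bounds it by $1-\bar\Phi(x'/\sqrt v)$ (the subtracted reflected term only helps), and since $x'\le x+\mu_t<x_\star+\eta_0$ on $\{t<\tau^{(0)}_\star\}$ with $\bar\Phi$ decreasing, this is at most $1-\bar\Phi((x_\star+\eta_0)/\sqrt v)$; for $y\ge x_0$ one simply uses $\int_{x_0}^\infty g_v(x'-y)\,\mathrm{d}y=\Phi((x'-x_0)/\sqrt v)\le\bar\Phi((x_0/2)/\sqrt v)$, valid as soon as $x_\star+\eta_0<x_0/2$. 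Altogether, for all $x\in(0,x_\star)$ and $t\in(0,\tau^{(0)}_\star)$,
\[
V_t(x)\le C\bigl(1-\bar\Phi\bigl((x_\star+\eta_0)/(\tilde\sigma\sqrt t)\bigr)\bigr)+D\,\bar\Phi\bigl((x_0/2)/(\tilde\sigma\sqrt t)\bigr).
\]

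To conclude, I choose $x_\star\in(0,x_0/2-\eta_0)$; then $\bar\Phi((x_0/2)/(\tilde\sigma\sqrt t))/\bar\Phi((x_\star+\eta_0)/(\tilde\sigma\sqrt t))\to 0$ as $t\downarrow 0$, so there is a deterministic $t_0>0$ with $D\,\bar\Phi((x_0/2)/(\tilde\sigma\sqrt t))\le\tfrac{C}{2}\bar\Phi((x_\star+\eta_0)/(\tilde\sigma\sqrt t))$ for $t\le t_0$. Taking $\tau_\star:=\tau^{(0)}_\star\wedge t_0$ (still $W$‑measurable and a.s.\ positive) and $\delta(t):=\tfrac12\bar\Phi((x_\star+\eta_0)/(\tilde\sigma\sqrt t))$ yields $V_t(x)\le(1-\delta(t))C$ on $(0,x_\star)$ for all $t\in(0,\tau_\star)$; and $\delta$ is deterministic, strictly positive for $t>0$, and increasing in $t$ since $\bar\Phi$ is decreasing and $(x_\star+\eta_0)/(\tilde\sigma\sqrt t)$ is decreasing in $t$. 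The main obstacle is precisely to extract a \emph{deterministic} gain $\delta(t)$ although the barrier $-\beta_s$ is random and only approximately pinned at $0$: this is what forces the $W$‑measurable localisation $\tau_\star$ together with comparison to a fixed barrier $-\eta_0$ rather than the true one, and why the budget $x_\star+\eta_0<x_0/2$ is needed so that the $D$‑tail from the mass beyond $x_0$ is dominated by the $C$‑gain near the re‑centred interface.
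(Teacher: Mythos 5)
Your proof is correct and follows essentially the same strategy as the paper's: drop the absorption (your reflection term is discarded anyway), localise to a $W$-measurable time on which $\alpha L_t - \sigma\rho W_t$ stays within a fixed distance $\eta_0 < x_0/2$ of the origin, split the initial mass at $x_0$, and show the $D$-contribution is a Gaussian tail dominated by the $C$-gain near the interface. Your version is if anything slightly cleaner, since the explicit tail comparison yields a deterministic $\delta(t)$, whereas the paper's gain depends on $|H_t|$ and is only controlled on the localised event.
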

\begin{proof}
	Let $p_{t}$ denote the density of the Normal random variable $\sigma B_{t}$. Set $x_{\star}:=x_{0}/4$. Let $H_t:=  \alpha L_{t} - \theta W_{t}$. Since $H_0=0$, by right continuity we can find a $W$-measurable time $\tau_{0}>0$ such that $H_t< x_\star$ for $t\leq\tau_0$. Ignoring the absorption at the origin, we have the bound
	\begin{align*}
		\nu_{t}([a,b]) & \leq\mathbb{P}\bigl(a-\sigma B_{t}+H_t \leq X_{0}\leq b-\sigma B_{t}+H_t \mid \mathcal{F}_\infty\bigr)\\ &=\mathbb{E}\Bigl[\int_{a-\sigma B_{t}+H_t}^{b-\sigma B_{t}+H_t}V_{0}(x)\mathrm{d}x \mid \mathcal{F}_\infty\Bigr] =\int_{-\infty}^{b+H_t}\int_{a}^{b}V_{0}(z+H_t-y)p_{t}(y)\mathrm{d}z\mathrm{d}y.
	\end{align*}
	Since $\sup_{s\leq r}H_s$ is of course increasing in $r$, we can find another random time $0<\tau_\star\leq\tau_0$ so that $c_{\star}:=x_\star- \sup_{s\leq \tau_\star}H_s >0$ and $De^{-c_{\star}^{2}/2\sigma^2 t}\leq C$ for $t\leq \tau_{\star}$.
	
	We proceed with computations similar to those in the proof of \cite[Proposition 2.1]{DNS} (for the analysis of \eqref{eq:McKean} with $\theta=0$), but we note that the observations made here are of a slightly different nature. Throughout, we restrict to $t\leq \tau_{\star}$ and consider an arbitrary interval $[a,b] \subseteq [0,x_{\star})$. We split the outer integral at $y=-b-|H_t|-c_{\star}$. Since $b<x_{\star}$, if $y\geq-b-|H_t|-c_{\star}$, then $b+H_t-y\leq 2b + 2(H_t \land 0) + c_{\star} <4x_{\star}=x_{0}$, so the first integral satisfies
	\begin{align*}
		\int_{-b-|H_t|-c_{\star}}^{b+H_t}\int_{a}^{b}V_{0}(z+H_t-y)p_{t}(y)\mathrm{d}z\mathrm{d}y & \leq C(b-a)\int_{-b-|H_t|-c_{\star}}^{b+H_t}p_{t}(y)\mathrm{d}y.
	\end{align*}
	For the other integral, we have
	\begin{align*}
		\int_{-\infty}^{-b-|H_t|-c_{\star}}\int_{a}^{b}V_{0}(z+H_t-y)\mathrm{d}z\,p_{t}(y)\mathrm{d}y & \leq D(b-a)\int_{-\infty}^{-b-|H_t|-c_{\star}}p_{t}(y)\mathrm{d}y.
	\end{align*}
	Changing variables $y\mapsto y+c_{\star}$, we can then note that
	\begin{align*}
		\int_{-\infty}^{-b-|H_t|-c_{\star}}p_{t}(y)\mathrm{d}y & \leq e^{-c_{\star}^{2}/2\sigma^2 t}\int_{-\infty}^{-b-|H_t|}p_t(y)\mathrm{d}y =e^{-c_{\star}^{2}/2\sigma^2 t}\int_{b+|H_t|}^{\infty}p_{t}(y)\mathrm{d}y.
	\end{align*}
	Recalling the control $De^{-c_{\star}^{2}/2\sigma^2 t}\leq C$ for $t\leq \tau_{\star}$, we thus conclude that
	\begin{align*}
		\nu_{t}([a,b]) & \leq C(b-a)\left(\int_{-b-|H_t|-c_{\star}}^{b+H_t}p_{t}(y)\mathrm{d}y+\int_{b+|H_t|}^{\infty}p_{t}(y)\mathrm{d}y\right)\\
		& \leq C(b-a)\left(1-\int_{-\infty}^{-c_{\star}-x_{\star}-|H_t|}p_{t}(y)\mathrm{d}y - \int_{b+H_t}^{b+|H_t|}p_{t}(y)\mathrm{d}y\right),
	\end{align*}
	for every $t\leq \tau_{\star}$. Since $[a,b]\subseteq[0,x_\star)$ was arbitrary, this finishes the proof.
\end{proof}

With the above lemma in place, we are now ready to prove Theorem \ref{prop:initial_global_cont}. Differently from Section \ref{Sect:proof_thm_jump_nonzero_prob}, here it will be possible to follow more closely the ideas from \cite{LS}, specifically the proof of \cite[Proposition 2.9]{LS}. Nevertheless, several changes are required and a more careful comparison argument must be employed.

\begin{proof}[Proof of Theorem \ref{prop:initial_global_cont}]
	Let the solution of minimal temperature increase from Theorem \ref{thm:minimality} be given. We will work with its probabilistic representation, which we rewrite in the form \eqref{eq:McKean} with $\nu_0(\mathrm{d}x)=-v_0(x)\mathrm{d}x$. By the assumption on $v_0$, it follows from Lemma \ref{lem:density_control} that the density $V_t$ of $\bar{\nu}(t,\mathrm{d}x)$, defined in \eqref{eq:nu_t}, lies strictly below $\alpha^{-1}$ on a neighbourhood of the origin on some random interval $(0,\tau_\star)$ with $\tau_\star>0$. Thus, the right-hand side of \eqref{eq:fragility_lower_bound} is zero for all $t$ in this interval, and hence Theorem \ref{thm:traject_jump} gives that, with probability 1, we cannot have a jump discontinuity on $[0,\tau_\star)$. Consequently, $\varsigma= \inf \{ t>0: L(t)\neq L(t-) \}$ is strictly positive with probability 1. This establishes the first claim. Towards the second claim, our assumptions on $v_0$ allow us to find an arbitrarily small $\varepsilon\in (0,x_0)$ such that $\bar{\nu}_0([0,\varepsilon])<\alpha^{-1}\varepsilon$ with $\bar{\nu}_0([0,\varepsilon+x])<\alpha^{-1}(\varepsilon+x)$ infinitely often as $x\downarrow \varepsilon$. Write $\bar{\nu}_0([0,\varepsilon])=(1-\delta)\alpha^{-1}\varepsilon$ for a small $\delta>0$. Given a continuous function $g:[0,\infty)\rightarrow\mathbb{R}$ with $g(0)=0$, we can consider the problem
	\begin{equation}
		\label{eq:aux_system_shift_mass}
		\begin{cases}
			\widehat{X}_t = Z -\delta \varepsilon + \sigma B_t + g(t)   -  \alpha \widehat{L}_t \\
			\widehat{\tau}= \inf \{t>0: \widehat{X}_t \leq0\}\\
			\widehat{L}_t=\bar{\nu}_0([0,\varepsilon]) +\mathbb{E}[ \mathbb{P}(\widehat{\tau}\leq t \mid Z>\varepsilon)],
		\end{cases}
	\end{equation}
	where $Z$ is distributed according to $\bar{\nu}_0$. As $x\mapsto \bar{\nu}_0([0,\varepsilon+x])$ satisfies \eqref{eq:initial_aligned}, this problem has a solution with  $\widehat{L}_0=\bar{\nu}_0([0,\varepsilon])$ and minimal jumps. We can use this as the basis for a refined version of the no-crossing lemma \cite[Lemma 2.8]{LS}. Suppose $f:[0,\infty)\rightarrow\mathbb{R}$ is a continuous function with $f(0)=0$ and $f(t)>g(t)-\delta\varepsilon$ for all $t\in[0,t_0)$, given some $t_0>0$, and assume $\widetilde{L}$ is continuous on $[0,t_0)$, where $\widetilde{L}$ solves
	\[
	\begin{cases}
		\widetilde{X}_t = Z + \sigma B_t + f(t)   -  \alpha \widetilde{L}_t \\
		\widetilde{\tau}= \inf \{t>0: \widetilde{X}_t \leq0\}\\
		\widetilde{L}_t= \mathbb{P}(\widetilde{\tau}\leq t )
	\end{cases}
	\]
	with $\widetilde{L}_0=0$ and minimal jumps (in the sense of Theorem \ref{prop:min_soln_jumps}). By construction, $\widehat{X}_0<\widetilde{X}_0$. If, moreover, $\widehat{X} \leq \widetilde{X}$ on $[0,t_0)$, then
	\begin{align*}
		\widetilde{L}_t =\widetilde{ L}_{t-} & \leq
		\bar{\nu}_0([0,\varepsilon]) +  \int_{\varepsilon}^{\infty}\mathbb{P}(\inf_{s < t} \widetilde{X}_s < 0 \mid Z=z)\bar{\nu}_{0}(\mathrm{d}z) \\
		&\leq \bar{\nu}_0([0,\varepsilon]) +  \int_{\varepsilon}^{\infty}\mathbb{P}(\inf_{s < t} \widehat{X}_s < 0 \mid Z=z)\bar{\nu}_{0}(\mathrm{d}z) = \widehat{L}_{t-} \leq \widehat{L}_{t} 
	\end{align*}
	for all $t<t_0$.
	At the same time, arguing as in \cite[Lemma 2.8]{LS}, we can let $s_0>0$ be the first time $\widehat{X}_{s} \geq \widetilde{X}_{s}$ and observe that we must then have 
	\[
	\alpha(\widehat{L}_{s_0} - \widetilde{L}_{s_0}) = g(s_0) -  f(s_0) - \delta \varepsilon < 0,
	\]
	if $s_0 < t_0$, which is a contradiction since the left-hand side is non-negative. Thus, we must have $\widehat{X} \leq \widetilde{X}$ on $[0,t_0)$, and in turn $\widetilde{L}\leq \widehat{L} $ on $[0,t_0)$.

	Take $x_0>0$ such that $V_0(x)\leq\alpha^{-1}$ on $[\varepsilon, \varepsilon + x_0)$, and let $x_\star:=x_0/4$. Let $t_\star>0$ be such that $\alpha  \widehat{L}_t\leq x_\star/3$ and $g(t)-\delta\varepsilon \geq - x_\star/3$ for all $t\leq t_\star$. This is possible by right continuity of $\widehat{L}$,  since $\widehat{L}_0$ can be made as close to zero as we like by decreasing $\varepsilon\in(0,x_0)$ if necessary. Lowering  $ t_\star>0$, if required, we can assume $ e^{-c^{2}/2\sigma^2 t_\star}\Vert V_0\Vert_{L^\infty}\leq \alpha^{-1}$, where $c:=x_\star - \sup_{s\leq t_\star}(\alpha \widehat{L}_s - g(s) + \delta\varepsilon)>0$. By jump minimality, it then follows from the proof of Lemma \ref{lem:density_control}, as in the first part of this proof, that  $\widehat{L}$ is continuous on $[0,t_\star)$. Take the above function $g$ of the form $g(s):=ms$, for some $m>0$ to be determined. Note that, as long as $\delta\varepsilon \leq x_\star /3$, we then have $g(s)-\delta \varepsilon\geq -x_\star/3$ for all $s\geq0$, so we restrict to small enough $\varepsilon>0$ for which this is the case. 
	
	From the first part of this proof, we have $\varsigma \geq \tau_\star$, where $\tau_\star >0$ is given by Lemma \ref{lem:density_control}. Recall the family of paths $\mathcal{A}$ defined in \eqref{eq:mathcal_A}, and take $l:=\delta \varepsilon>0$. On the event $(\theta W_t)_{t\leq t_\star} \in \mathcal{A}_{t_\star;m, l}$, we claim that $L$ is continuous on $[0,t_\star)$, where the deterministic time $t_\star>0$ is defined for $\widehat{L}$ as above. Towards a contradiction, suppose this is not the case. Then $\varsigma<t_\star$ for some of the sample paths on the event $(\theta W_t)_{t\leq t_\star} \in \mathcal{A}_{t_\star;m, l}$. Since $L$ is continuous on $[0,\varsigma)$ with $\varsigma>0$, letting $L$ take the place of $\widetilde{L}$ in the above comparison argument (for a given sample path $f(t)=\theta W_t$), we can deduce that $L \leq \widehat{L}$ on $[0,\varsigma)$ on the event $(\theta W_t)_{t\leq t_\star} \in \mathcal{A}_{t_\star;m, l}$. Since we have this uniform control on $L_t$ in terms of  $\widehat{L}_t$, for $t\in [0,\varsigma) \subseteq [0,t_\star]$, and since we also have uniform control over $W_t$, for $t\in [0,\varsigma)$, on the event $(\theta W_t)_{t\leq t_\star} \in \mathcal{A}_{t_\star;m, l}$, we can verify that the estimates in the proof of Lemma \ref{lem:density_control} apply uniformly for all $t<\varsigma$ (since $\varsigma<t_\star$), and thus we get $V_{\varsigma-}(x)<\alpha^{-1}$ for $x$ near the origin. But then Lemma \ref{lem:density_control} and jump minimality give that there cannot be jumps at time $\varsigma$ nor for some strictly positive (random) period of time thereafter which contradicts the definition of $\varsigma$ for any of the sample paths with $t_\star>\varsigma$. Consequently, we do indeed have $\varsigma\geq t_\star$ on the event $(\theta W_t)_{t\leq t_\star} \in \mathcal{A}_{t_\star;m, l}$.
	
	At this point, we have ensured that there is no blow-up on the deterministic interval $[0,t_\star]$, with $t_\star>0$, on the aforementioned event. From Proposition \ref{prop:no_blow_up_timing} we know that no blow-ups can occur strictly after time $\alpha^2/2\pi\sigma^2$. Thus, it remains to show that we can avoid blow-ups on $[t_\star,s_{\star})$ for $s_{\star}:=t_\star+\alpha^2/2\pi\sigma^2$. To this end, note that, for $t\in[t_\star,s_{\star})$,
	\begin{align*}
		\nu_{t-}( \hspace{0.4pt} [ 0,\alpha x] \hspace{0.4pt} ) 
		&\leq \mathbb{P}(X_{t-} \in [ 0, \alpha x]\mid \mathcal{F}_\infty) \nonumber \\
		&= \mathbb{P}(X_0 + \sigma B_t + \theta W_t - \alpha L_{t-} \in[ 0, \alpha x] \mid \mathcal{F}_\infty ) \nonumber \\
		&\leq \int^\infty_0 \int^{\alpha x}_0 \frac{1}{\sqrt{2\pi \sigma^2 t_\star }} \exp\Big\{ - \frac{(z - y - \theta W_t + \alpha L_{t-})^2}{2\sigma^2 s_{\star}} \Big\}  \mathrm{d}z \, d\bar{\nu}_0(y),
	\end{align*}
	since $B$, $\bar{X}_0$, and $W$ are independent, and $L$ is $W$-measurable. 
	Note that the above argument for no blow-up on $[0,t_\star]$ does not depend on $m>0$, so we are free to tune this parameter. In particular, we can take $m:=t_\star^{-1}K$ for a suitable constant $K>0$. 
	
	Now recall the family of paths $\mathcal{U}$ defined in \eqref{eq:mathcal_U}, and note that any sample path of $\theta W$ belonging to $\mathcal{A}_{t_\star;m, l} \cap \mathcal{U}_{ t_\star, s_{\star}; \alpha}$ satisfies $\theta W_t >  K - \alpha - \delta \varepsilon$ for $t\in[t_\star,s_\star]$. Therefore, taking $K>3\alpha+\delta \varepsilon$, we get
	\[
	\nu_{t-}( \hspace{0.4pt} [0, \alpha x] \hspace{0.4pt} ) \leq \frac{\alpha}{ \sqrt{2\pi \sigma^2 t_\star}} \exp\Big\{ -\frac{(K - 3\alpha - \delta\varepsilon)^2}{2\sigma^2 s_\star} \Big\} \cdot x,
	\]
	for $x\in(0,1)$, for every $t\in[t_\star,s_\star]$ on the event $(\theta W_s)_{s\in[0,s_\star]} \in  \mathcal{A}_{t_\star;m,l} \cap \mathcal{U}_{ t_\star, s_{\star}; \alpha}$.
	It follows that we can take $K$ large enough so that $\nu_{t-}(\hspace{0.4pt}[0, \alpha x]\hspace{0.4pt}) < x$, for $x\in(0,1)$, for all $t\in[t_\star, s_\star]$. This rules out blow-ups on $[t_\star, s_\star]$, by the minimality of jumps, and hence we can conclude that
	\[
	0 < \mathbb{P}\bigl( (\theta W_s)_{s\in [0,s_\star]} \in \mathcal{A}_{t_\star;m, l} \cap \mathcal{U}_{ t_\star, s_{\star}; \alpha}  \bigr) \leq \mathbb{P}(\varsigma = +\infty),
	\]
	as required, where we recall that $s_\star=t_\star+\alpha^2/2\pi\sigma^2$, $m=K/t_\star$, and $l=\delta\varepsilon$, for the small $t_\star>0$ found above and the fixed $\delta,\varepsilon>0$. Thus, the proof is complete.
\end{proof}


\bibliographystyle{alpha}

\end{document}